\documentclass[12pt,a4paper]{article}
\usepackage{amsfonts}
\usepackage{eurosym}
\usepackage{latexsym}
\usepackage{amsmath}
\usepackage{amssymb}
\usepackage[margin=.7in]{geometry}
\usepackage[english]{babel}
\usepackage{scalerel,stackengine}
\usepackage{extpfeil}
\usepackage{graphicx}
\usepackage{ntheorem}
\usepackage{tikz-cd}
\usepackage{MnSymbol}
\usepackage{stackengine}

\theoremseparator{.}

\stackMath
\newcommand\reallywidehat[1]{\savestack{\tmpbox}{\stretchto{  \scaleto{    \scalerel*[\widthof{\ensuremath{#1}}]{\kern-.6pt\bigwedge\kern-.6pt}    {\rule[-\textheight/2]{1ex}{\textheight}}  }{\textheight}}{0.5ex}}\stackon[1pt]{#1}{\tmpbox}}
\newtheorem{theorem}{Theorem}

\newtheorem{claim}[theorem]{Claim}

\newtheorem{lemma}[theorem]{Lemma}

\newtheorem{proposition}[theorem]{Proposition}
\newtheorem{remark}[theorem]{Remark}

\def\curl{\operatorname{curl}}

\def\supp{\operatorname{supp}}

\begin{document}
\makeatletter
\def\blfootnote{\xdef\@thefnmark{}\@footnotetext}
\makeatother

\date{}
\title{\textbf{Nonexistence of Henkin type projections via a Wiener theorem
for multipliers}}
\author{Eduard Curc\u a\thanks{%
Faculty of Mathematics, Informatics and Mechanics, University of Warsaw,
Poland } \and Micha\l {} Wojciechowski\thanks{%
Institute of Mathematics, Polish Academy of Sciences, Warsaw, Poland} }
\maketitle

\begin{abstract}
\setlength{\parindent}{2cm} Let $d\geq 2$, $l\geq 0$ and suppose $X$ is one
of the function spaces $W^{l,1}(\mathbb{T}^{d})$, $W^{l,\infty }(\mathbb{T}%
^{d})$ or $C^{l}(\mathbb{T}^{d})$. We extend a result of Henkin (1967),
showing that, for appropriate $N\times N$ matrix operators $A(D)$, the
subspace of $X^{N}$ consisting of $A(D)-$free elements is noncomplemented. In
order to prove this we establish a new property of the Fourier multipliers
that are bounded on $X$: the kernel $k$ of any such multiplier obeys a
weaker version of Wiener's theorem for the singularities of measures.
\end{abstract}
\makeatletter

\makeatother

\makeatletter

\makeatother

\section{Introduction}

\blfootnote{Keywords: Fourier multipliers, Sobolev spaces.}
\blfootnote{MSC 2020 classification: 42B15, 42B05, 46E35.}

In \cite{Henk}(1967) G. M. Henkin obtained the following result:

\begin{theorem}
\label{th.Henk'}Suppose $d\geq 2$ and $l\geq 0$ are integers. Then, the
space 
\begin{equation}
G_{1}(C^{l}(\mathbb{T}^{d})):=\{\nabla f\in (C^{l}(\mathbb{T}^{d}))^{d}\mid f%
\text{ distribution on }\mathbb{T}^{d}\},  \label{grad-0}
\end{equation}%
is not a complemented subspace of $(C^{l}(\mathbb{T}^{d}))^{d}$.
\end{theorem}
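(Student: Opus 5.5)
We argue by contradiction: suppose $P:(C^{l}(\mathbb{T}^{d}))^{d}\to G_{1}(C^{l}(\mathbb{T}^{d}))$ is a bounded projection. The plan is to average $P$ over translations so that it becomes translation invariant, and then read off a contradiction from Fourier multipliers. For $\tau_{h}F=F(\cdot +h)$, set
\begin{equation*}
\tilde{P}F=\int_{\mathbb{T}^{d}}\tau_{-h}P\tau_{h}F\,dh .
\end{equation*}
Since $G_{1}$ is translation invariant and closed, $\tilde{P}$ is again a bounded projection onto $G_{1}$, and it commutes with translations. A bounded translation-invariant operator on $(C^{l})^{d}$ is a $d\times d$ matrix of Fourier multipliers $m_{ij}$, each bounded on $C^{l}(\mathbb{T}^{d})$. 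The projection property forces the values of the matrix at every frequency $\xi \neq 0$: $M(\xi )$ must be the identity on $\mathbb{C}\xi$, which gives $M(\xi )\xi =\xi$, and it must map into $\mathbb{C}\xi$. A second averaging over the rotations of the lattice $\mathbb{Z}^{d}$ (coordinate permutations and sign changes) does not by itself make $M(\xi )$ the orthogonal projection. For that reason, instead of trying to identify $M$ exactly, we will extract a \emph{scalar} bounded multiplier from $M$ that has a forbidden form.

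Since $M(\xi )$ maps into $\mathbb{C}\xi$, it has the form $M(\xi )v=\xi \,\langle a(\xi ),v\rangle$, with $\langle a(\xi ),\xi \rangle =1$. Apply $M$ to vectors $F=(f,0,\dots ,0)$ and look at the first component: the multiplier $\xi _{1}a_{1}(\xi )$ is bounded on $C^{l}$. Composing with the inverse of the elliptic operator $(1-\Delta )^{l/2}$, i.e. conjugating by it, we may reduce to $l=0$ when working on $C(\mathbb{T}^{d})$. Here the multipliers are exactly those given by measures: $\xi _{j}a_{k}(\xi )=\hat{\mu}_{jk}(\xi )$ with $\mu _{jk}\in M(\mathbb{T}^{d})$. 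Summing over the diagonal gives $\sum_{j}\hat{\mu}_{jj}(\xi )=\langle a(\xi ),\xi \rangle =1$ for $\xi \neq 0$, while the matrix has rank one: $\hat{\mu}_{jk}\hat{\mu}_{kj}=\hat{\mu}_{jj}\hat{\mu}_{kk}$ and $\xi _{k}\hat{\mu}_{jk}=\xi _{j}\hat{\mu}_{kk}$.

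The contradiction comes from Wiener's theorem, applied through averages on large dyadic-type sets of frequencies. The identity $\xi _{k}\hat{\mu}_{jj}=\xi _{j}\hat{\mu}_{kj}$ says that $\partial _{k}\mu _{jj}=\partial _{j}\mu _{kj}$, which constrains the singular parts. Wiener's theorem computes the atoms of $\mu$ via $\lim \frac{1}{|Q_{R}|}\sum_{\xi \in Q_{R}}|\hat{\mu}(\xi )|^{2}=\sum |\mu (\{x\})|^{2}$. Homogeneity-type considerations show that $\hat{\mu}_{jj}$ must behave like $\xi _{j}^{2}/|\xi |^{2}$ on average along rays, with the values forced by translation and dilation averaging. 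One also uses that the restriction of a measure multiplier to the sublattices $N\mathbb{Z}^{d}$ is again a measure multiplier. A multiplier asymptotically homogeneous of degree zero and nonconstant cannot be the Fourier transform of a measure: the averages over $NQ$ converge to $\mu (\{0\})$ by a Wiener/Lebesgue-type argument, forcing it to be constant. That contradicts the facts that $\sum_{j}\hat{\mu}_{jj}=1$ and that each $\hat{\mu}_{jj}$ vanishes on $\{\xi _{j}=0\}$.

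The main obstacle is rigorously showing that the measure $\mu _{jj}$ has essentially homogeneous, nonconstant Fourier transform. The plan is to pass to weak-$*$ limits of the dilated multipliers $\hat{\mu}_{jj}(N\xi )$. These are still Fourier transforms of measures with the same norm. Their limit is a bounded measure multiplier that vanishes on $\{\xi _{j}=0\}$, and these limits sum to $1$ over $j$, yet each is constant, by the Wiener-type averaging. That is impossible for $d\geq 2$.
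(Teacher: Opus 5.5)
There is a genuine gap, and it is exactly the step the paper spends most of its effort on. Your reduction to $l=0$ is not valid. Fourier multipliers commute with $(1-\Delta)^{l/2}$, so conjugating by it changes nothing. The reduction therefore silently assumes $M(C^{l}(\mathbb{T}^{d}))=M(C(\mathbb{T}^{d}))$, i.e.\ that $(1-\Delta)^{-l/2}$ maps $C$ isomorphically onto $C^{l}$. For $d\geq 2$, $l\geq 1$ this is false; already $\partial_{1}\partial_{2}(1-\Delta)^{-1}$ is unbounded on $C(\mathbb{T}^{d})$. As the paper stresses, kernels of multipliers on $C^{l}(\mathbb{T}^{d})$ are in general not measures: Proposition~\ref{prop.g} only gives $\nabla^{\alpha}k\in\mathcal{M}^{-l}$ for $|\alpha|\le l$. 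So for $l\geq 1$ you cannot write $M_{jk}=\widehat{\mu}_{jk}$, and Wiener's theorem for measures is not available.

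The missing ingredient is the Wiener property for $M(C^{l}(\mathbb{T}^{d}))$ (Lemma~\ref{lem.W'}): averages of $\widehat{k}$ over $B_{d}(t\omega,r(t))$ with $r(t)=o(t)$ converge to a limit \emph{independent of} $\omega$. The paper gets the existence of the directional limit from $\partial_{j}^{l}k=\sum_{|\beta|\le l}\nabla^{\beta}\nu_{\beta,j}$. On such balls this makes $\widehat{k}$ close to the Fourier transform of the measure $\sum_{|\beta|=l}(\xi^{\beta}/\xi_{j}^{l})\nu_{\beta,j}$. That measure depends on the direction, so independence of $\omega$ needs a separate argument. The limit symbol is $0$-homogeneous and is shown to be a multiplier on $\dot{C}_{0}^{l}(\mathbb{R}^{d})$, hence constant by Lemma~\ref{lem.jfa}; one then transfers from $\mathbb{R}^{d}$ to $\mathbb{T}^{d}$. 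This independence cannot be avoided: direction-dependent limits such as $\Gamma(\omega)=\omega\omega^{T}$ satisfy all of your algebraic constraints.

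Even for $l=0$, the endgame does not work as written. Subsequential weak-$*$ limits of the dilates $\widehat{\mu}_{jj}(N\xi)$ carry no homogeneity, so there is no reason for them to be constant. Vanishing on the lattice hyperplane $\{\xi_{j}=0\}$ says nothing about ball averages. There is also no reason for $\widehat{\mu}_{jj}$ to behave like $\xi_{j}^{2}/|\xi|^{2}$, since $\widetilde{P}$ need not be the orthogonal projection.

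The quantitative input you need is the cone estimate $|\widehat{\mu}_{jj}(\xi)|=|\xi_{j}/\xi_{k}|\,|\widehat{\mu}_{kj}(\xi)|\le\Vert\mu_{kj}\Vert\,|\xi_{j}|/|\xi_{k}|$. Apply Wiener's lemma with moving centers on $B_{d}(t\mathbf{e}_{k},r(t))$ with $k\neq j$ and $r(t)=o(t)$. This gives $\mu_{jj}(\{0\})=0$ for every $j$. On the other hand, $\sum_{j}\widehat{\mu}_{jj}=1$ on $\mathbb{Z}^{d}\setminus\{0\}$ forces $\sum_{j}\mu_{jj}(\{0\})=1$, a contradiction.

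This trace argument also works for $l\geq 1$, since symbols are bounded by Proposition~\ref{prop.b-inhom}, provided Wiener's lemma is replaced by Lemma~\ref{lem.W'}. That replacement is precisely the step your proposal lacks.
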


The proof of Henkin in \cite{Henk} (1967) is based on a clever use of the
group of the symmetries of the sphere. He considers the problem on the
sphere instead of the torus. He shows (by averaging) that, if Theorem \ref%
{th.Henk'} fails then there exists a unique corresponding projection that
commutes with the action of $O_{d}$. However, there is an explicit formula
for this projection and it turns out it can not be Lipschitz. See also \cite[%
Theorem 10.8]{B-L} for an exposition of this proof.

In the case $l=0$ there are other proofs of Theorem \ref{th.Henk'}. For
instance:

\begin{itemize}
\item[(i)] The proof of S. V. Kislyakov in \cite[Theorem 3]{Ki} (1975) via
the theory of absolutely summing operators. He showed that $C^{1}(\mathbb{T}%
^{d})$ is not an isomorphic copy of a quotient space of $C(S)$. Note that
this result is much stronger than statement of Theorem \ref{th.Henk'} when $%
l=0$.

\item[(ii)] The proof given in \cite[Section 4.2]{CE-1} (2024) via complex
interpolation. Here it is shown that the existence of a bounded projection
implies the existence of a projection that is simultaneously bounded from $%
(C(\mathbb{T}^{d}))^{d}$ to $G_{1}(C(\mathbb{T}^{d}))$ and from $(L^{2}(%
\mathbb{T}^{d}))^{d}$ to $G_{1}(L^{2}(\mathbb{T}^{d}))$. We obtain that the
couple $(C^{1}(\mathbb{T}^{d}),H^{1}(\mathbb{T}^{d}))$ interpolates
\textquotedblleft in the same way\textquotedblright\ as $(C(\mathbb{T}%
^{d}),L^{2}(\mathbb{T}^{d}))$. This is shown to be false by using standard
Littlewood-Paley and trace theory.
\end{itemize}

\bigskip

In this paper we study results of the same nature as Theorem \ref{th.Henk'}.
In particular, we give a much simpler proof of Theorem \ref{th.Henk'} than
the proofs listed above (see Remark \ref{rem.X}). Instead of the symmetries
of the sphere, we use (in a rather standard way) the symmetries of the
torus. By this the problem gets reduced to a problem concerning the Fourier
multipliers on $C^{l}(\mathbb{T}^{d})$. Then, inspired by an idea that goes
back to J.-P. Kahane (\cite[p. 401 and Theorem 3.1]{P}; see also \cite[%
Section 3]{KW-1} for a similar application), we conclude by using a variant
of Wiener's theorem on the singularities of measures.

Similar arguments give us more. Our main result is a generalisation of
Theorem \ref{th.Henk'} that does not seem to be reachable by the methods
(i), (ii) listed above neither by the method of Henkin. In order to
formulate our result let us introduce some notation.

Let $X$ be a Banach function space on $\mathbb{T}^{d}$. For a matrix
function $A:\mathbb{R}^{d}\rightarrow M_{N}(\mathbb{C)}$, we define the
space of all $A(D)$-free vectors as 
\begin{equation*}
W_{A}(X):=\left\{ u\in X^{N}\text{ }|\text{ }A(D)u^{\dag }=0\right\}
\subseteq X^{N},
\end{equation*}%
where the norm is induced by the norm of $X$. (Here and in what follows, by
\textquotedblleft\ $\dag $ \textquotedblright\ we mean the usual
transposition of vectors.) Clearly, $W_{A}(X)$ is a closed subspace of $%
X^{N} $. With this notation our main result is the following.

\begin{theorem}
\label{th.A} Let $d,N\geq 2$, $l\geq 0$ be integers and consider a matrix
function $A:\mathbb{R}^{d}\rightarrow M_{N}(\mathbb{C)}$ such that the
following conditions are satisfied:

\begin{itemize}
\item[(A1)] $A$ is a $0-$homogeneous function continuous on $\mathbb{S}%
^{d-1} $;

\item[(A2)] $\bigcap_{\omega \in \mathbb{S}^{d-1}}KerA(\omega )=\{0\}$;

\item[(A3)] $A$ is non-invertible on some nonempty open subset of $\mathbb{S}%
^{d-1}$.
\end{itemize}

Suppose $X$ is one of the spaces $W^{l,1}(\mathbb{T}^{d})$, $W^{l,\infty }(%
\mathbb{T}^{d})$ or $C^{l}(\mathbb{T}^{d})$. Then, $W_{A}(X)$ is not
complemented in $X^{N}$.
\end{theorem}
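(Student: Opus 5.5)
We argue by contradiction: suppose $P: X^N \to W_A(X)$ is a bounded projection. The plan has three steps: average $P$ over translations of $\mathbb{T}^d$ to obtain a projection that is a Fourier multiplier, identify that multiplier on the Fourier side, and then contradict its boundedness with a Wiener-type theorem for multipliers on $X$.

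\textbf{Averaging.} Let $\tau_h$ denote translation by $h\in\mathbb{T}^d$. Since $A(D)$ commutes with translations, each $\tau_{-h}P\tau_h$ is again a bounded projection onto $W_A(X)$, with the same norm. We set
\[
\tilde P=\int_{\mathbb{T}^d}\tau_{-h}P\tau_h\,dh .
\]
For $W^{l,1}$ this is a Bochner integral, since translation is continuous. For $W^{l,\infty}$ and $C^l$ we define it weakly, pairing against trigonometric polynomials or measures. The result $\tilde P$ is a translation-invariant bounded projection onto $W_A(X)$, so it is a matrix Fourier multiplier $\tilde P u^\dagger=(m(D)u^\dagger)$ with $m:\mathbb{Z}^d\to M_N(\mathbb{C})$. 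Each entry $m_{ij}$ is a bounded multiplier on $X$.

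\textbf{Identifying $m$.} For each frequency $n$, $m(n)$ is a projection of $\mathbb{C}^N$ onto $\operatorname{Ker}A(n)$: the fibre of $W_A(X)$ at $n$ is exactly $\operatorname{Ker}A(n/|n|)$, by 0-homogeneity. The entries of $m$ are bounded multipliers on $X$. By the standard transference from $\mathbb{T}^d$ to $\mathbb{R}^d$, combined with dilation, they extend to multipliers on $\mathbb{R}^d$, and their 0-homogeneous limits along rays give $\mathbb{S}^{d-1}$-functions $m_\infty(\omega)$. These are again projections onto $\operatorname{Ker}A(\omega)$ for $\omega$ in a dense set of rational directions, and then for all $\omega$ by (A1).

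\textbf{Contradiction via Wiener.} The key tool, to be established later in the paper, is a weak Wiener theorem for multipliers: if $m$ is a bounded multiplier on $X$ and $k$ is its kernel, then $k$ has no "point-mass-type" singularity at the origin, so $m$ must be asymptotically continuous in a strong averaged sense along the sphere. Roughly, the averages
\[
\frac{1}{|B_R|}\sum_{|n|\le R}|m(n)-\text{local mean}|^2 \to 0,
\]
which forces the homogeneous symbol $m_\infty$ to be constant on $\mathbb{S}^{d-1}$. Granting this, $m_\infty\equiv M$ is one fixed projection. Its range must equal $\operatorname{Ker}A(\omega)$ for every $\omega$. By (A3) this range is nonzero on an open set, hence $M\neq0$, hence $\operatorname{Ker}A(\omega)\supseteq \operatorname{Ran}M\neq\{0\}$ for all $\omega$. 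This contradicts (A2).

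\textbf{Main obstacle.} The hard step is the Wiener-type theorem itself, and in particular showing that homogeneous multipliers bounded on $C^l$, $W^{l,\infty}$ or $W^{l,1}$ are constant. For $l=0$ and $L^1$ this is classical: the symbol is a Fourier--Stieltjes transform, and a homogeneous measure transform can only be continuous at $0$ if it is constant. For $l\ge1$ one has to exploit the fact that $\partial^\alpha$ with $|\alpha|=l$ nearly recovers $L^1$. I would try writing $m=\sum_{|\alpha|=l}\xi^\alpha m\,\xi^\alpha/|\xi|^{2l}$, testing against functions whose top derivatives are approximate identities, and applying Wiener's theorem to the resulting measures; this forces the discontinuity of $m$ across directions to vanish. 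I expect the details there to be the genuine difficulty.
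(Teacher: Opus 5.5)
Your outer frame matches the paper's. You average over translations, note that $m(n)=\widehat K(n)$ is a projection with range $\operatorname{Ker}A(n)$, extract a direction-independent limit matrix, and contradict (A2) via (A3). The gap is the middle object $m_\infty$.

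\textbf{Limits along rays need not exist.} Translation by $a\in\mathbb{T}^d$ is an isometric multiplier on all three spaces, with symbol $e^{i\langle n,a\rangle}$. This symbol has no limit along the lattice points $jn_0$ of a rational ray unless $\langle n_0,a\rangle\in 2\pi\mathbb{Z}$. Nothing you know about $m$ rules out such behaviour. A projection is not determined by its range, so $m(n)$ may a priori oscillate in the choice of complement.

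\textbf{The Wiener-type statement you intend to use is false as written.} For the same example, the local means over large balls tend to $0$ while $|m|\equiv 1$, so the mean-square oscillation tends to $1$, not $0$. Also, ``no point mass at the origin'' is backwards. Wiener's lemma says ball averages of $\widehat\mu$ tend to $c\,\mu(\{0\})$, i.e.\ they detect exactly that point mass; the identity operator has kernel $\delta_0$.

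What is needed (Lemmas \ref{lem.W'}, \ref{lem.W'-1}) is a first-moment statement:
$|B(t\omega,r(t))|^{-1}\sum_{\chi\in B_d(t\omega,r(t))}\widehat K(\chi)\to\Gamma_\varepsilon$, with $\Gamma_\varepsilon$ independent of $\omega$. Here $r(t)=o(t)$ for $C^l$ and $W^{l,\infty}$, but only $r_\varepsilon(t)=\varepsilon t$ for $W^{l,1}$. Only linear information passes through such averages, so the paper transports the identities $A\widehat K=0$ and $\widehat K(I-A^+A)=I-A^+A$. It uses that $A$ is nearly constant on $B(t\omega,r(t))$ by (A1). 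It also uses that $A^+$ is continuous on an open set $U$ where $A$ is non-invertible and the index $s$ in (\ref{alg-ps}) is constant. Since the balls have radius $\varepsilon t$ in the $W^{l,1}$ case, this only gives $A(\omega)\Gamma_\varepsilon=o_{\varepsilon\to0}(1)$, and one must pass to a further limit $\Gamma_{\varepsilon_n}\to\Gamma$.

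With $\Gamma$ in place of your $M$, your endgame works. $A(\omega)\Gamma=0$ for all $\omega$ forces $\Gamma=0$ by (A2), while $\Gamma$ is the identity on $\operatorname{Ker}A(\omega)\neq\{0\}$ for $\omega\in U$. Your claim that the limit is a projection onto $\operatorname{Ker}A(\omega)$ for every $\omega$ ``by (A1)'' fails where $\operatorname{rank}A$ jumps; only $\operatorname{Ran}\subseteq\operatorname{Ker}A(\omega)$ survives there. That weaker statement is all you need.

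Finally, your suggested route to the Wiener property through measures is the paper's route for $C^l$ and $W^{l,\infty}$, where $\nabla^\alpha k\in\mathcal M^{-l}$ by duality (Proposition \ref{prop.g}). It cannot work for $W^{l,1}$, because kernels of $W^{l,1}$ multipliers need not admit such a decomposition (Proposition \ref{prop.Bon-Moh}). There the paper instead runs a Riesz-product argument on the smoothed symbol $m_\varepsilon$, which is why only linear-size balls are available.
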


\begin{remark}
\label{rem.A}The conditions (A1)--(A3) above are not intended to be sharp.
We chose sufficiently general conditions that are also easy to work with.
Considering the operator $A(D)$ obtained by completing the operator $|\nabla
|^{-1}\curl $ with zeros to a square matrix, we obtain Theorem \ref%
{th.Henk'} for $X=C^{l}(\mathbb{T}^{d})$. We note that Theorem \ref{th.A}
also covers the case of $W^{l,1}(\mathbb{T}^{d})$. A closer look at Henkin's
proof shows that his argument is still valid if we replace $C^{l}(\mathbb{T}%
^{d})$ by $W^{l,1}(\mathbb{T}^{d})$ in Theorem \ref{th.Henk'} (see the comments after the proof of Theorem \ref{th.GC}). However, Henkin's proof seems to be confined to the
gradient spaces.
\end{remark}

The main ingredient in the proof of Theorem \ref{th.A} is the fact that,
even if the Fourier multipliers on the spaces $W^{l,1}(\mathbb{T}^{d})$, $%
W^{l,\infty}(\mathbb{T}^{d})$ or $C^{l}(\mathbb{T}^{d})$ (with $l\geq 1$
integer) are not given by convolution with kernels that are measures, the
Fourier transform of such kernels still satisfy a weaker form of the Wiener
theorem (that will be called \textit{Wiener property}; see Lemmas \ref%
{lem.W'}, \ref{lem.W'-1} in Section \ref{sec.W}). Namely, if $T_{k}$ (the
operator given by convolution with the kernel $k$) is a bounded multiplier
on $W^{l,1}(\mathbb{T}^{d})$, $W^{l,\infty}(\mathbb{T}^{d})$ or $C^{l}(%
\mathbb{T}^{d})$, then the limit of the averages on balls 
\begin{equation}
\lim_{t\rightarrow \infty }\dfrac{1}{|B_{d}(t\omega ,r(t))|}\sum_{\chi\in
B_{d}(t\omega ,r(t))}\widehat{k}(\chi ) ,  \label{Wi}
\end{equation}%
exists and does not depend on $\omega \in \mathbb{S}^{d-1}$, where $r$ is
some appropriate function with $\lim_{t\rightarrow \infty }r(t)=\infty $
that depends only on the function space considered.

This fact, that is interesting for itself, turns out to provide an almost
immediate proof of Theorem \ref{th.A}. It is the Wiener property of the Fourier multipliers on the limiting Sobolev spaces that is the most important result in this paper.

In short: Section \ref{sec.fm} collects general facts and formulas concerning the multiplier that are used later. In Section \ref{sec.W} we introduce the Wiener property and we prove various forms of it; first in the case of $W^{l,\infty}$ (and $C^{l}$), then in the case of $W^{l,1}$. In Section \ref{sec.proj} we use the Wiener property in order to prove
Theorem \ref{th.A} (and Theorem \ref{th.GC}).

 In all the proofs in this paper we
use only standard measure and distributions theory.

\bigskip

\noindent \textbf{Notation and conventions.} Throughout the paper we use
mainly standard notation. For instance, we often use the symbols $\lesssim $
and $\sim $. For two non-negative variable quantities $a$ and $b$ we write $%
a\lesssim b$ if there exists a constant $C>0$ such that $a\leq Cb$. If $%
a\lesssim b$ and $b\lesssim a$, then we write $a\sim b$.

We will often use the standard basis of $\mathbb{R}^{d}$, i.e., $\mathbf{e}%
_{j}:=(0,...,0,1,0,...,0)$ (with $1$ on the $j-$th position), for $j=1,...,d$%
. For any $\xi \in \mathbb{R}^{d}$ we define $\langle
\xi\rangle:=(1+|\xi|^{2})^{1/2}$.

The ball of center\ $\xi \in \mathbb{R}^{d}$ and radius $r>0$ will be
denoted by $B(\xi ,r)$ and its discrete version will be denoted by $%
B_{d}(\xi ,r):=B(\xi ,r)\cap \mathbb{Z}^{d}$. Also, $|B(\xi ,r)|$ is the
volume of $B(\xi ,r)$ and $|B_{d}(\xi ,r)|$ is the number of points in $%
B_{d}(\xi ,r)$.

We use the standard symbol for the integral mean on a bounded domain $\Omega
\subset \mathbb{R}^{d}$: 
\begin{equation*}
\strokedint_{\Omega }f(x)dx=\frac{1}{|\Omega |}\int_{\Omega }f(x)dx,
\end{equation*}%
when $f\in L_{loc}^{1}(\mathbb{R}^{d})$. The Fourier transform that we work
with is given by the following formula 
\begin{equation*}
\widehat{f}(\xi ):=\int_{\mathbb{R}^{d}}e^{-i\langle x,\xi \rangle }f(x)dx,
\end{equation*}%
for any sufficiently regular function $f$. We consider a similar definition
on $\mathbb{T}^{d}$. We make the convention that, whenever we consider the
operators $T_{k}$ (given by convolution with the kernel $k$) the kernel $k$
is assumed to be a tempered distribution (and $\widehat{k}\in
L_{loc}^{1}\left( \mathbb{R}^{d}\right) $) in the case $\mathbb{R}^{d}$ and
a distribution in the case of $\mathbb{T}^{d}$.

Throughout the paper by $c$ we denote various constants. The dimension $d$
will be always considered to be at least $2$ and $l$ will be a nonnegative
integer. Other notation will be introduced when needed.

\section*{Acknowledgements}

This work was supported by the National Science Centre, Poland, CEUS
programme, project no. 2020/02/Y ST1/00072.

\section{Function spaces and Fourier multipliers}
\label{sec.fm}

\smallskip Let us briefly recall the definitions of the function spaces we
use and several elementary facts concerning the Fourier multipliers. Even if
our main result Theorem \ref{th.GC} refers to the torus $\mathbb{T}^{d}$,
for some technical reasons we need to consider also functions spaces that
are defined on $\mathbb{R}^{d}$.

Fix some parameter $p\in \lbrack 1,\infty ]$. Given a nonnegative integer $l$%
, the inhomogeneous space $W^{l,p}(\mathbb{R}^{d})$ consists of those
tempered distributions $f$ on $\mathbb{R}^{d}$ for which $\nabla ^{\alpha
}f\in L^{p}(\mathbb{R}^{d})$, for any multiindex $\alpha $, with $\left\vert
\alpha \right\vert \leq l$. This space is endowed with the norm given by%
\begin{equation*}
\left\Vert f\right\Vert _{W^{l,p}}=\max_{\alpha \in \mathbb{N}%
^{d},\left\vert \alpha \right\vert \leq l}\left\Vert \nabla ^{\alpha
}f\right\Vert _{L^{p}}.
\end{equation*}

Similarly, the homogeneous space $\dot{W}^{l,p}(\mathbb{R}^{d})$ consists of
those tempered distributions $f$ on $\mathbb{R}^{d}$ for which $\nabla
^{\alpha }f\in L^{p}(\mathbb{R}^{d})$, for any multiindex $\alpha $, with $%
\left\vert \alpha \right\vert =l$. This space is endowed with the seminorm
given by%
\begin{equation*}
\left\Vert f\right\Vert _{\dot{W}^{l,p}}=\left\Vert \nabla ^{l}f\right\Vert
_{L^{p}}=\max_{\alpha \in \mathbb{N}^{d},\left\vert \alpha \right\vert
=l}\left\Vert \nabla ^{\alpha }f\right\Vert _{L^{p}}.
\end{equation*}

On $\mathbb{R}^{d}$ we also consider the homogeneous space $\dot{C}_{0}^{l}(%
\mathbb{R}^{d})$ and the inhomogeneous space $C_{0}^{l}(\mathbb{R}^{d})$.
Here, $\dot{C}_{0}^{l}(\mathbb{R}^{d})$ is defined as the closure of the
space of Schwartz functions in the norm induced by $\dot{W}^{l,\infty }(%
\mathbb{R}^{d})$. The space $C_{0}^{l}(\mathbb{R}^{d})$ is the space of
those functions $f$ on $\mathbb{R}^{d}$ for which $\nabla ^{\alpha }f\in
C_{0}(\mathbb{R}^{d})$ (the space of the continuous functions that converge
to $0$ at infinity), for any multiindex $\alpha $, with $\left\vert \alpha
\right\vert \leq l$. The norm of $C_{0}^{l}(\mathbb{R}^{d})$ is induced by
the norm of the inhomogeneous space $W^{l,\infty }(\mathbb{R}^{d})$. On $%
\mathbb{T}^{d}$ we will consider the spaces $W^{l,p}(\mathbb{T}^{d})$, $%
C^{l}(\mathbb{T}^{d})$ with their standard definition.

Since $(C_{0}(\mathbb{R}^{d}))^{\ast }=\mathcal{M}(\mathbb{R}^{d})$, where $%
\mathcal{M}(\mathbb{R}^{d})$ is the space of the Radon measures on $\mathbb{R%
}^{d}$, we get that $(C_{0}^{l}(\mathbb{R}^{d}))^{\ast }=\mathcal{M}^{-l}(%
\mathbb{R}^{d})$, where $\mathcal{M}^{-l}(\mathbb{R}^{d})$ is the space of
all distributions $f$ of the form 
\begin{equation}
f=\sum_{|\alpha |\leq l}\nabla ^{\alpha }\mu _{\alpha },  \label{defM-1}
\end{equation}%
where each $\mu _{\alpha }$ belongs to $\mathcal{M}(\mathbb{R}^{d})$ (see
for instance \cite[Section 4.3]{Z}). The norm on $\mathcal{M}^{-l}(\mathbb{R}%
^{d})$ is given by 
\begin{equation}
\left\Vert f\right\Vert _{\mathcal{M}^{-l}}:=\inf \left\{ \left.
\sum_{|\alpha |\leq l}\left\Vert \mu _{\alpha }\right\Vert _{\mathcal{M}}%
\text{ }\right\vert \text{ }f=\sum_{|\alpha |\leq l}\nabla ^{\alpha }\mu
_{\alpha }\right\} \text{.}  \label{defM-2}
\end{equation}

Similarly, on $\mathbb{T}^{d}$ we have $(C^{l}(\mathbb{T}^{d}))^{\ast }=%
\mathcal{M}^{-l}(\mathbb{T}^{d})$, for any integer $l\geq 0$, where $%
\mathcal{M}(\mathbb{T}^{d})$ is the space of the Radon measures on $\mathbb{T%
}^{d}$ and $\mathcal{M}^{-l}(\mathbb{T}^{d})$ is defined as in (\ref{defM-1}%
) and (\ref{defM-2}), with the obvious adaptations to the case of $\mathbb{T}%
^{d}$.

\bigskip

Let $X$ and $Y$ be a seminormed spaces of tempered distributions on $\mathbb{%
R}^{d}$, or distributions on $\mathbb{T}^{d}$. If $k$ is a (tempered)
distribution and $\sigma :=\widehat{k}$ (with $\sigma \in L_{loc}^{1}(%
\mathbb{R}^{d})$, in the case $k$ is defined on $\mathbb{R}^{d}$), we
define the operator $T_{k}$ by\footnote{%
In the case of $\mathbb{R}^{d}$ we impose that $\sigma \widehat{f}$ is a
tempered distribution when $f$ is Schwartz.} 
\begin{equation*}
\widehat{T_{k}f}=\sigma \widehat{f},\ 
\end{equation*}%
for any test function $f$ (that is $f$ is Schwartz in the case of $\mathbb{R}%
^{d}$ and $f\in X$ in the case of $\mathbb{T}^{d}$). If for any test
function $f$ we have 
\begin{equation}
\Vert T_{k}f\Vert _{Y}\leq C\Vert f\Vert _{X},\   \label{Fmult}
\end{equation}%
for some constant $C<\infty $, we say that $T_{k}$ is a (bounded) Fourier
multiplier from $X$ to $Y$, and we write $T_{k}\in M(X,Y)$. The best
constant $C$ in (\ref{Fmult}) is the norm of $T_{k}$ in the space $M(X,Y)$,
denoted by $\Vert T_{k}\Vert _{M(X,Y)}$. If $Y=X$, then we write $M(X)$
instead of $M(X,X)$.

The distribution $\sigma $ will be called the symbol of the multiplier $%
T_{k} $. Sometimes it is convenient to write $\sigma (D)$, with $D:=\nabla
/i $, in place of $T_{k}$.

\bigskip

By using the Ornstein $L^{1}-$noninequality, Bonami and Poornima (\cite[%
Theorem 2]{BP}) proved that if a homogeneous symbol induces a bounded
multiplier on $\dot{W}^{l,1}(\mathbb{R}^{d})$, then the symbol must be a
constant function. We record this result with \cite[Lemma 6]{CE-F} as
follows.

\begin{lemma}
\label{lem.Bon-Poo}Let $k$ be a scalar tempered distribution on $\mathbb{R}%
^{d}$ such that $T_{k}\in M(\dot{W}^{l,1}(\mathbb{R}^{d}))$. Then, we have $%
\widehat{k}\in C_{b}(\mathbb{R}^{d}\backslash \{0\})$. If moreover, $%
\widehat{k}$ is homogeneous of degree $0$, then $\widehat{k}$ is constant.
\end{lemma}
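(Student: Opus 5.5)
The lemma is quoted from the literature (Bonami--Poornima together with \cite[Lemma 6]{CE-F}), so I only sketch how one could recover it. I would prove the two assertions separately.

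\textbf{Continuity.} Let $T_{k}\in M(\dot{W}^{l,1}(\mathbb{R}^{d}))$. I would reduce to $L^{1}$-type information away from the origin by localizing in frequency.

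Fix a Schwartz function $\phi$ with $\widehat{\phi}$ supported in an annulus $\{a<|\xi|<b\}$. For any Schwartz $g$, put $f:=\nabla^{\alpha}$ applied to a suitable Fourier-localized version of $g$, and use $|\xi|^{l}\sim 1$ on the annulus. This should show that $\widehat{k}\,\widehat{\phi}$ is the Fourier transform of a finite measure, with norm controlled by $\Vert T_{k}\Vert$ and by constants depending on the annulus.

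Concretely, I would test the multiplier on $f=\psi(D)g$, where $\psi$ is smooth, compactly supported in a slightly larger annulus, and equal to $1$ on $\supp\widehat\phi$. On that annulus, $L^{1}$ norms and $\dot{W}^{l,1}$ norms are comparable via the Mikhlin-free multipliers $\xi^{\alpha}|\xi|^{-2l}\xi^{\alpha}\psi$, which have Schwartz kernels. Hence $\widehat{k}\psi$ is an $L^{1}$ Fourier multiplier, and therefore the Fourier transform of a finite measure.

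It follows that $\widehat{k}\psi$ is bounded and continuous. Covering $\mathbb{R}^{d}\backslash\{0\}$ by dilated annuli, and noting that the multiplier norm on $\dot{W}^{l,1}$ is dilation invariant, gives uniform bounds. Thus $\widehat{k}\in C_{b}(\mathbb{R}^{d}\backslash\{0\})$.

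\textbf{Homogeneous case.} Assume now that $\widehat{k}$ is $0$-homogeneous. The idea is to reduce to Ornstein's noninequality.

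Take $\partial^{\beta}$ with $|\beta|=l$. The operators $\partial^{\beta}T_{k}$ are bounded from $\dot{W}^{l,1}$ to $L^{1}$. If $\widehat{k}(\omega)$ is nonconstant on the sphere, I would pick multiindices $\alpha,\beta$ of order $l$ such that the symbol
\[
\frac{\xi^{\beta}\widehat{k}(\xi)}{\xi^{\alpha}}
\]
is not a linear combination of the monomials $\xi^{\gamma}/\xi^{\alpha}$, $|\gamma|=l$, in the Ornstein sense. Ornstein's theorem (the $L^{1}$ noninequality) says that an estimate
\[
\Vert Q(D)u\Vert_{1}\lesssim \max_{|\gamma|=l}\Vert\partial^{\gamma}u\Vert_{1}
\]
forces $Q$ to be in the linear span of $\{\xi^{\gamma}\}_{|\gamma|=l}$. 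Ornstein's proof works for homogeneous continuous symbols via rescaled bump constructions. Since we know $\widehat{k}\,\xi^{\beta}$ is continuous and $l$-homogeneous, that proof applies: $\xi^{\beta}\widehat{k}(\xi)$ must equal a homogeneous polynomial $P_{\beta}$ of degree $l$.

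Then $\widehat{k}=P_{\beta}/\xi^{\beta}$ for every $\beta$ with $|\beta|=l$. Choosing $\beta=l\mathbf{e}_{j}$ for the various $j$ and comparing, we get that $\widehat k$ is a rational $0$-homogeneous function whose denominators can be taken to be $\xi_{j}^{l}$ for each $j$. Since $d\geq 2$, these denominators are coprime, so $\widehat{k}$ is a polynomial of degree $0$, hence constant.

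\textbf{Main obstacle.} The hard step is extending Ornstein's noninequality from polynomial to merely continuous homogeneous symbols. I would try the standard approach: take $u=\sum\varepsilon_{j}\phi(2^{j}x)$-type laminated constructions, and use continuity of $\widehat{k}$ on the sphere to approximate it locally by polynomials.
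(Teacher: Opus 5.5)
Your continuity argument is correct. The paper does not prove this lemma; it quotes it from the literature. The mechanism it relies on appears in the proof of Proposition \ref{prop.b-inhom}: apply $\partial_j^l\widehat{k}(D)$ to a single Schwartz $\rho$ with $\widehat\rho>0$, so that $(i\xi_j)^l\widehat{k}\,\widehat\rho$ is the Fourier transform of an $L^1$ function. Then divide by $(i\xi_j)^l\widehat\rho$ on $\{\xi_j\neq 0\}$, and get the uniform bound from dilation invariance. Your frequency localization gives slightly more, namely that locally $\widehat k$ is the Fourier transform of a finite measure, at the cost of one extra step. One correction: the symbols that recover $L^1$ from $\dot W^{l,1}$ are $\xi^{\alpha}|\xi|^{-2l}\widetilde\psi$, with $\widetilde\psi=1$ on $\supp\psi$, via $\widetilde\psi=\sum_{|\alpha|=l}\frac{l!}{\alpha!}\,\xi^{\alpha}|\xi|^{-2l}\widetilde\psi\cdot\xi^{\alpha}$.

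The homogeneous part has a genuine gap. With all monomials $\xi^{\gamma}$, $|\gamma|=l$, on the right-hand side, Ornstein's noninequality is vacuous. Every homogeneous constant-coefficient operator of order $l$ trivially satisfies $\|Q(D)u\|_{L^1}\lesssim\max_{|\gamma|=l}\|\partial^{\gamma}u\|_{L^1}$.

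What you actually invoke is a rigidity theorem for non-polynomial symbols: a continuous $l$-homogeneous $Q$ with $Q(D):\dot W^{l,1}\to L^1$ bounded must be a polynomial. Applied to all $Q=\xi^{\beta}\widehat k$, $|\beta|=l$, simultaneously, its hypothesis is exactly $T_k\in M(\dot W^{l,1})$. So this theorem is at least as strong as the lemma you are proving, and ``Ornstein's proof works for continuous symbols'' is the whole content, not a technicality. Local polynomial approximation on the sphere cannot supply it. Near any single direction, a continuous $l$-homogeneous symbol is already close to a polynomial, so the obstruction is global.

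The missing idea is a reduction to genuine differential operators. One way is through rotations:
\begin{itemize}
\item $\int|\nabla^l f|$ is a rotation-invariant norm equivalent to $\|f\|_{\dot W^{l,1}}$. Hence the symbols $\widehat k\circ R$, $R\in SO(d)$, have uniformly bounded multiplier norms, and so does every average $\int\widehat k(R\xi)\phi(R)\,dR$.
\item For $d\geq 3$, first restrict to a $2$-plane by a de Leeuw-type limit, using the continuity you already proved.
\item Averaging against $e^{-in\theta}$ shows that $e^{in\arg\xi}$ is a multiplier whenever the $n$-th Fourier coefficient of $\widehat k|_{\mathbb{S}^1}$ is nonzero.
\item By squaring and passing to $\overline{m(-\xi)}$, you may take $n=2p>0$. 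Then $e^{2ip\arg\xi}=(\xi_1+i\xi_2)^p/(\xi_1-i\xi_2)^p$.
\item Testing on $u=(\partial_1-i\partial_2)^p v$ gives
\[
\|\partial_1^{l}(\partial_1+i\partial_2)^{p}v\|_{L^1}\lesssim\max_{|\gamma|=l}\|\partial^{\gamma}(\partial_1-i\partial_2)^{p}v\|_{L^1}.
\]
\end{itemize}
Now the classical noninequality has content. It would force $(\xi_1-i\xi_2)^p$ to divide $\xi_1^{l}(\xi_1+i\xi_2)^{p}$, which is false. Hence $\widehat k$ is constant on every great circle, hence constant. With this route your final coprimality step is no longer needed.
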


\bigskip

One also need a similar result concerning the seminorm of $\dot{W}^{l,\infty
}(\mathbb{R}^{d})$:

\begin{lemma}
\label{lem.jfa}Let $k$ be a scalar tempered distribution on $\mathbb{R}^{d}$
such that $T_{k}$ is bounded from $\dot{C}_{0}^{l}(\mathbb{R}^{d})$ to $\dot{%
W}^{l,\infty }(\mathbb{R}^{d})$. Then, we have $\widehat{k}\in C_{b}(\mathbb{%
R}^{d}\backslash \{0\})$. If moreover, $\widehat{k}$ is homogeneous of
degree $0$, then $\widehat{k}$ is constant.
\end{lemma}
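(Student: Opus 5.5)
The natural route is duality: Lemma \ref{lem.jfa} should reduce to Lemma \ref{lem.Bon-Poo}. The point is that the dual of $\dot{C}_{0}^{l}(\mathbb{R}^{d})$ is a space of distributions of the form $\sum_{|\alpha|=l}\nabla^{\alpha}\mu_{\alpha}$, and this space contains $\dot{W}^{l,1}$-type objects isometrically (up to constants).

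\emph{Step 1 (duality).} Suppose $T_{k}:\dot{C}_{0}^{l}\to\dot{W}^{l,\infty}$ is bounded with norm $C$. Set $\tilde{k}(x):=k(-x)$, so the formal adjoint is $T_{\tilde k}$. For Schwartz $f,g$ I would write
\begin{equation*}
\Big|\int T_{\tilde k}g\,\overline{f}\Big| = \Big|\int g\,\overline{T_{k}f}\Big|.
\end{equation*}
Since the symbol is a multiplier we may move derivatives freely. If $g=\sum_{|\alpha|=l}\nabla^{\alpha}h_{\alpha}$ with $h_{\alpha}\in L^{1}$ Schwartz, then $\int g\,\overline{T_k f}=\pm\sum\int h_{\alpha}\overline{\nabla^{\alpha}T_k f}$. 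This is bounded by $C\|f\|_{\dot W^{l,\infty}}\sum\|h_\alpha\|_{L^1}$.

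\emph{Step 2 (bounding in $\dot W^{l,1}$).} The goal is to bound $\|\nabla^{\beta}T_{\tilde k}\phi\|_{L^{1}}$ for Schwartz $\phi$ and $|\beta|=l$. Testing against Schwartz $f$ in a suitable weak-$*$ sense, I compute
\begin{equation*}
\int\nabla^{\beta}T_{\tilde k}\phi\,\overline{f}=\pm\int\phi\,\overline{\nabla^{\beta}T_{k}f}=\pm\int\phi\,\overline{T_{k}\nabla^{\beta}f}.
\end{equation*}
The key observation is that $F:=\nabla^{\beta}f$ ranges over a set of functions $F$ with $\|F\|_{L^\infty}\le\|f\|_{\dot W^{l,\infty}}$. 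However, duality needs to control $\int \phi\,\overline{T_k F}$ by $\|F\|_{L^\infty}$, so $l$ derivatives must be traded back. I would do this by writing $F=\nabla^{\beta}f$ where $f=\nabla^{\beta}\Delta^{-l}\,(\ldots)$. This naive approach fails because Riesz transforms are unbounded on $L^\infty$.

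\emph{Step 2, corrected.} Instead, I would use the Bonami–Poornima argument directly: its proof (via Ornstein's noninequality) only uses testing on functions $\phi_{\epsilon}$ concentrating at frequency regions. So the cleaner approach is to dualise the \emph{whole} statement. Boundedness $\dot C^l_0\to\dot W^{l,\infty}$ gives, by the pairing in Step 1, boundedness of $T_{\tilde k}$ on the predual-type space $\{\sum_{|\alpha|=l}\nabla^\alpha h_\alpha: h_\alpha\in L^1\}$ with the infimum norm. Call this space $\dot{\mathcal W}^{-l,1}$. Composing with $\Delta^{l}$ (or $(-\Delta)^{l}$, a homogeneous isomorphism intertwining $\dot W^{l,1}$ into $\dot{\mathcal W}^{-l,1}$ only one way), the useful direction is that the inclusion $\nabla^{2l}:\dot W^{l,1}\to\dot{\mathcal W}^{-l,1}$ is bounded. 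This gives the estimate $\|\nabla^\alpha\nabla^{\gamma} T_{\tilde k}u\|_{\dot{\mathcal W}^{-l,1}}\lesssim\|u\|_{\dot W^{l,1}}$, which is equivalent in the required sense.

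\emph{Step 3 (conclusion).} Rather than a pure reduction, I would rerun the Ornstein-type argument of \cite{BP} and \cite[Lemma 6]{CE-F} in the $\dot{C}_0^l\to\dot W^{l,\infty}$ setting, which is dual to theirs. First, continuity of $\widehat k$ away from $0$ comes from testing $T_k$ on $e^{i\langle x,\xi\rangle}\psi(\epsilon x)$ modulations. These yield $L^\infty$ control of $\widehat k$ near each $\xi$, and continuity follows by smoothing $k$ with an approximate identity, since $T_k$ commutes with translations. Second, for $0$-homogeneous $\widehat k$, a blow-up or rescaling limit converts boundedness into boundedness of $\widehat k(D)$ from $C_0$ to $L^\infty$ on $l$-th derivatives, that is, of the operators $\widehat k(D)\xi^\alpha\xi^\beta/|\xi|^{2l}$-type combinations. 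Ornstein's noninequality in the $L^\infty$ form (equivalently, by duality, the $L^1$ form) then forces $\widehat k$ to be constant.

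The main obstacle is this last step: making precise that $L^\infty$-Ornstein (the dual of Ornstein's $L^1$ noninequality on the span of $\{\partial^\alpha: |\alpha|=l\}$) applies. I would attempt it by dualising to measures with $\sum\nabla^\alpha\mu_\alpha$ and applying the Bonami–Poornima result in its measure version.
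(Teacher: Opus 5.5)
Your proposal does not yet contain a proof. The two decisive steps are either unjustified or false.

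In the corrected Step~2, the estimate you actually obtain is $\|\nabla^{\alpha}\nabla^{\gamma}T_{\tilde k}u\|_{\dot{\mathcal W}^{-l,1}}\lesssim\|u\|_{\dot W^{l,1}}$. This is strictly weaker than $\|\nabla^{\gamma}T_{\tilde k}u\|_{L^{1}}\lesssim\|u\|_{\dot W^{l,1}}$. Recovering $\nabla^{\gamma}v\in L^{1}$ from $\nabla^{\alpha}\nabla^{\gamma}v\in\dot{\mathcal W}^{-l,1}$ means undoing $l$ derivatives through Riesz-type multipliers on $L^{1}$, which is exactly the obstruction you had just identified. So ``equivalent in the required sense'' is false, and the hypothesis of Lemma~\ref{lem.Bon-Poo} is never reached. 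Besides, the adjoint only maps your $L^{1}$-type space into the measure-type space $\{\sum_{|\alpha|=l}\nabla^{\alpha}\mu_{\alpha}\}$.

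In Step~3 there are two problems. First, convolving $k$ with an approximate identity $\phi_{\epsilon}$ multiplies $\widehat{k}$ by $\widehat{\phi_{\epsilon}}$, so it cannot make $\widehat{k}$ continuous. Second, read literally, the rescaling claim that boundedness turns into $L^{\infty}$-boundedness of $\widehat{k}(\xi)\xi^{\alpha}\xi^{\beta}/|\xi|^{2l}$ already fails for $\widehat{k}\equiv1$. The identity is bounded on $\dot W^{l,\infty}$, but a product of Riesz transforms such as $\xi_{1}\xi_{2}/|\xi|^{2}$ is not bounded on $L^{\infty}$. The constancy step is then left as an open ``obstacle'', resting on an unspecified measure version of Bonami--Poornima.

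The missing idea is to dualise a \emph{functional} rather than the operator, as in Propositions~\ref{prop.g} and~\ref{prop.b-inhom}. For $|\alpha|=l$, the map $f\mapsto\nabla^{\alpha}T_{k}f(0)$ (after a routine mollification) is bounded by $C\|\nabla^{l}f\|_{L^{\infty}}$ on Schwartz functions. Since $f\mapsto(\nabla^{\beta}f)_{|\beta|=l}$ embeds $\dot C_{0}^{l}(\mathbb{R}^{d})$ isometrically into a finite product of copies of $C_{0}(\mathbb{R}^{d})$, Hahn--Banach and the Riesz theorem give finite measures with $\nabla^{\alpha}k=\sum_{|\beta|=l}\nabla^{\beta}\nu_{\beta,\alpha}$. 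This is the homogeneous analogue of (\ref{dec-miu}). Taking $\alpha=l\mathbf{e}_{j}$, the Fourier transform gives $\widehat{k}(\eta)=\sum_{|\beta|=l}\eta^{\beta}\eta_{j}^{-l}\,\widehat{\nu}_{\beta,j}(\eta)$ for $\eta_{j}\neq0$, i.e.\ (\ref{k-niu}) without lower-order terms. This is continuous and bounded on each cone $\{|\eta_{j}|\geq|\eta|/\sqrt{d}\}$, and these cones cover $\mathbb{R}^{d}\setminus\{0\}$. This is the mechanism of \cite[Lemma 8]{CE-F}, which the paper transfers unchanged because only Schwartz test functions are used. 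For the homogeneous statement, the paper likewise transfers \cite[Theorem 10]{CE-F}, which rests on $L^{\infty}$-noninequalities as in \cite{dL-M}, not on Ornstein.

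Your intended reduction does become correct once this representation is in hand. The identity $\nabla^{\alpha}T_{k}f=\sum_{\beta}\nu_{\beta,\alpha}\ast\nabla^{\beta}f$ yields $\|\nabla^{\alpha}T_{k}f\|_{L^{1}}\leq\sum_{\beta}\|\nu_{\beta,\alpha}\|_{\mathcal{M}}\|\nabla^{\beta}f\|_{L^{1}}$. Hence $T_{k}\in M(\dot W^{l,1}(\mathbb{R}^{d}))$, and Lemma~\ref{lem.Bon-Poo} gives both conclusions. The cost is relying on Ornstein's noninequality, which the paper's route avoids.
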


The first statement of Lemma \ref{lem.jfa} is the analogue of \cite[Lemma 8]%
{CE-F} and the second statement is the analogue of \cite[Theorem 10]{CE-F}.
Here, $\dot{C}_{0}^{l}(\mathbb{R}^{d})$ is replacing the domain ($\dot{W}%
^{l,\infty }(\mathbb{R}^{d})$ in \cite{CE-F}) of the operator $T_{k}$.
However, since in the proofs of \cite[Lemma 8]{CE-F}, \cite[Theorem 10]{CE-F}
we only use the action of the operator on the Schwartz functions, we get as
well Lemma \ref{lem.jfa} above by the same arguments as in \cite{CE-F}. It
is worth recalling, that the proofs in \cite{CE-F} related to Lemma \ref%
{lem.jfa} do not use the Ornstein noninequality, but much simpler techniques
related to $L^{\infty }-$noninequalities as in \cite{dL-M}\footnote{%
In fact, similar methods can be seen in the proofs of Propositions \ref%
{prop.g}, \ref{prop.b-inhom} below.}

An immediate consequence of the first part of Lemma \ref{lem.jfa} is that
the boundedness of $T_{k}$ from $\dot{C}_{0}^{l}(\mathbb{R}^{d})$ to $\dot{W}%
^{l,\infty }(\mathbb{R}^{d})$ gives $T_{k}\in \dot{C}_{0}^{l}(\mathbb{R}%
^{d}) $. Indeed, since $\widehat{k}\in C_{b}(\mathbb{R}^{d}\backslash \{0\})$%
, for any Schwartz function $\psi $ on $\mathbb{R}^{d}$, the function

\begin{equation*}
T_{k}\psi (x)=\int_{\mathbb{R}^{d}}\widehat{k}(\xi )\widehat{\psi }(\xi
)e^{i\left\langle x,\xi \right\rangle }d\xi =\widehat{k}(D)\psi (x),
\end{equation*}%
is Schwartz on $\mathbb{R}^{d}$. We get that $T_{k}$ is bounded from $\dot{C}%
_{0}^{l}(\mathbb{R}^{d})$ to $\dot{C}_{0}^{l}(\mathbb{R}^{d})$.

In order to gather general facts about the multipliers on $C_{0}^{l}(\mathbb{%
R}^{d})$ we formulate the following\footnote{%
Similar facts were already considered in \cite{Poo}.}:

\begin{proposition}
\label{prop.g}Let $k$ be a scalar tempered distribution on $\mathbb{R}^{d}$.
Then, the following are equivalent:

\begin{itemize}
\item[(i)] $T_{k}$ is bounded from $C_{0}^{l}(\mathbb{R}^{d})$ to $C_{0}^{l}(%
\mathbb{R}^{d})$ (i.e., $T_{k}\in M(C_{0}^{l}(\mathbb{R}^{d}))$);

\item[(ii)] For any multiindex $\alpha $ with $|\alpha |\leq l$ we have $%
\nabla ^{\alpha }k\in \mathcal{M}^{-l}(\mathbb{R}^{d})$.
\end{itemize}

(A similar equivalence holds on the torus where $C_{0}^{l}(\mathbb{R}^{d})$, 
$\mathcal{M}^{-l}(\mathbb{R}^{d})$ are replaced by $C^{l}(\mathbb{T}^{d})$, $%
\mathcal{M}^{-l}(\mathbb{T}^{d})$ respectively.)
\end{proposition}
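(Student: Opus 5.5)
The plan is to prove both implications by duality, using that $(C_{0}^{l}(\mathbb{R}^{d}))^{\ast }=\mathcal{M}^{-l}(\mathbb{R}^{d})$ isometrically (up to equivalent norms) and that $C_{0}^{l}$ embeds isometrically into a finite product $\prod_{|\alpha |\leq l}C_{0}(\mathbb{R}^{d})$ via $f\mapsto (\nabla ^{\alpha }f)_{\alpha }$.

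\textbf{(ii)$\Rightarrow$(i).} Assume $\nabla ^{\alpha }k\in \mathcal{M}^{-l}$ for $|\alpha |\leq l$. For a Schwartz $f$ and a point $x$ write
\begin{equation*}
\nabla ^{\alpha }T_{k}f(x)=(\nabla ^{\alpha }k)\ast f(x)=\langle \nabla ^{\alpha }k,f(x-\cdot )\rangle .
\end{equation*}
Translation and reflection are isometries of $C_{0}^{l}$, so
\begin{equation*}
|\nabla ^{\alpha }T_{k}f(x)|\leq \Vert \nabla ^{\alpha }k\Vert _{\mathcal{M}^{-l}}\Vert f\Vert _{C_{0}^{l}} .
\end{equation*}
Taking the supremum over $x$ and the maximum over $\alpha $ gives $\Vert T_{k}f\Vert _{W^{l,\infty }}\lesssim \Vert f\Vert _{C_{0}^{l}}$.

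It remains to check that $T_{k}f$ lies in $C_{0}^{l}$, not merely in $W^{l,\infty }$. Write $\nabla ^{\alpha }k=\sum_{\beta }\nabla ^{\beta }\mu _{\beta }$. Then $\nabla ^{\alpha }T_{k}f=\sum_{\beta }\mu _{\beta }\ast \nabla ^{\beta }f$, and a finite measure convolved with a $C_{0}$ function is again $C_{0}$ by dominated convergence. Density of Schwartz functions in $C_{0}^{l}$ then extends $T_{k}$ boundedly to all of $C_{0}^{l}$.

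\textbf{(i)$\Rightarrow$(ii).} Fix $\alpha $ with $|\alpha |\leq l$ and consider the functional $\Lambda _{\alpha }(f):=\nabla ^{\alpha }T_{k}f(0)$ on Schwartz $f$. By (i), $|\Lambda _{\alpha }(f)|\leq \Vert T_{k}\Vert _{M}\Vert f\Vert _{C_{0}^{l}}$, so $\Lambda _{\alpha }$ extends to a bounded functional on $C_{0}^{l}$. By the duality quoted above, there is $g_{\alpha }\in \mathcal{M}^{-l}$ with $\Lambda _{\alpha }(f)=\langle g_{\alpha },f\rangle $ for Schwartz $f$. Concretely, extend $\Lambda _{\alpha }$ by Hahn--Banach to $\prod C_{0}$ and apply Riesz to obtain measures $\mu _{\beta }$; then $g_{\alpha }=\sum (-1)^{|\beta |}\nabla ^{\beta }\mu _{\beta }$.

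On the other hand, for Schwartz $f$,
\begin{equation*}
\Lambda _{\alpha }(f)=\langle \nabla ^{\alpha }k,\tilde{f}\rangle ,\qquad \tilde{f}(y)=f(-y).
\end{equation*}
Hence $\nabla ^{\alpha }k$ equals the reflection of $g_{\alpha }$ as tempered distributions. Since $\mathcal{M}^{-l}$ is reflection invariant, $\nabla ^{\alpha }k\in \mathcal{M}^{-l}$.

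\textbf{Main obstacle.} The delicate point is justifying $\nabla ^{\alpha }T_{k}f(0)=\langle \nabla ^{\alpha }k,\tilde{f}\rangle $ when $k$ is only a tempered distribution with $\widehat{k}\in L_{loc}^{1}$. I would handle this on the Fourier side: $\nabla ^{\alpha }T_{k}f(0)=(2\pi )^{-d}\int (i\xi )^{\alpha }\widehat{k}(\xi )\widehat{f}(\xi )\,d\xi $. This integral is well defined because (i) already forces $T_{k}f$ to be a bounded $C^{l}$ function. The same expression is exactly the pairing of $\nabla ^{\alpha }k$ with $\tilde{f}$.

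\textbf{Torus case.} The argument is identical, with $C^{l}(\mathbb{T}^{d})$ and $\mathcal{M}^{-l}(\mathbb{T}^{d})$ in place of their $\mathbb{R}^{d}$ counterparts. Trigonometric polynomials serve as test functions, and the $C_{0}$ membership issue disappears because $\mathbb{T}^{d}$ is compact.
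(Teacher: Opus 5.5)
Your proposal is correct and follows essentially the same route as the paper. For (i)$\Rightarrow$(ii) you evaluate $\nabla^{\alpha}T_{k}f$ at the origin and invoke $(C_{0}^{l}(\mathbb{R}^{d}))^{\ast}=\mathcal{M}^{-l}(\mathbb{R}^{d})$. For (ii)$\Rightarrow$(i) you use the decomposition $\nabla^{\alpha}k=\sum_{\beta}\nabla^{\beta}\nu_{\beta,\alpha}$ together with $\Vert\nu\ast g\Vert_{L^{\infty}}\le\Vert\nu\Vert_{\mathcal{M}}\Vert g\Vert_{L^{\infty}}$. Beyond the paper, you also check explicitly that $T_{k}f$ lands in $C_{0}^{l}$ and you handle the reflection.

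The ``main obstacle'' you flag is routine: $T_{k}f=k\ast f$ for tempered $k$ and Schwartz $f$, so $\nabla^{\alpha}T_{k}f(0)=\langle\nabla^{\alpha}k,\tilde{f}\rangle$ directly. However, the reason you give (boundedness of $T_{k}f$) does not by itself make $\int(i\xi)^{\alpha}\widehat{k}\widehat{f}\,d\xi$ absolutely convergent, so that expression should be read as a distributional pairing.
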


\bigskip

\noindent\textbf{Proof.} We write 
\begin{equation*}
|\left\langle \nabla ^{\alpha }k,f\right\rangle |=|\nabla ^{\alpha }k\ast 
\overline{f}(0)|\leq \left\Vert \nabla ^{\alpha }(k\ast \overline{f}%
)\right\Vert _{L^{\infty }}\lesssim \left\Vert f\right\Vert _{W^{l,\infty }},
\end{equation*}%
for any $\alpha $ with $|\alpha |\leq l$ and any $f\in \dot{C}_{0}^{l}(%
\mathbb{R}^{d})$. Hence, $\nabla ^{\alpha }k\in (C_{0}^{l}(\mathbb{R}%
^{d}))^{\ast }=\mathcal{M}^{-l}$. This shows that (i) implies (ii).

Now, fix some $\alpha $ as before and write 
\begin{equation}
\nabla ^{\alpha }k=\sum_{|\beta |\leq l}\nabla ^{\beta }\nu _{\beta ,\alpha
},  \label{dec-miu}
\end{equation}%
for some $\nu _{\beta ,\alpha }\in \mathcal{M}(\mathbb{R}^{d})$ (depending
on $k$ and $\alpha $). We have 
\begin{equation*}
\left\Vert \nabla ^{\alpha }(k\ast f)\right\Vert _{L^{\infty }}\leq
\sum_{|\beta |\leq l}\left\Vert \nu _{\beta ,\alpha }\ast \nabla ^{\beta
}f\right\Vert _{L^{\infty }}\lesssim \sum_{|\beta |\leq l}\left\Vert \nu
_{\beta ,\alpha }\right\Vert _{\mathcal{M}}\left\Vert \nabla ^{\beta
}f\right\Vert _{L^{\infty }}\lesssim \left\Vert f\right\Vert _{W^{l,\infty
}},
\end{equation*}%
for any $f\in C_{0}^{l}(\mathbb{R}^{d})$. This shows that (ii) implies (i).
(In the case of the torus the proof is similar.)\hfill $\square $

\begin{remark}
\bigskip The distributions $k$ satisfying condition (ii) in Proposition \ref%
{prop.g} were also considered in \cite[p. 15]{Poo} where the space of such
elements was denoted by $B^{l}$. It was also shown in \cite[Proposition 5.6]%
{Poo} that $B^{1}$ describes the space of multipliers of certain Segal
algebra. Proposition \ref{prop.g} above shows that the elements of $B^{l}$
in \cite{Poo} are the multipliers on $C_{0}^{l}(\mathbb{R}^{d})$ (or $C^{l}(%
\mathbb{T}^{d})$).
\end{remark}

\begin{remark}
\label{rem.g}The proof of the the fact that \textquotedblleft (ii) implies
(i)\textquotedblright\ in Proposition \ref{prop.g} shows that we have in
fact the embedding 
\begin{equation*}
M(C^{l}(\mathbb{T}^{d}))\subseteq M(W^{l,p}(\mathbb{T}^{d})),
\end{equation*}%
for any $p\in \lbrack 1,\infty ]$. See also Proposition 3.2 in \cite{Poo}
for similar considerations.
\end{remark}

\begin{remark}
\label{rem.g.inf}The proof of Proposition \ref{prop.g} shows in fact that we
have 
\begin{equation*}
M(C_{0}^{l}(\mathbb{R}^{d}))=M(C_{0}^{l}(\mathbb{R}^{d}),W^{l,\infty }(%
\mathbb{R}^{d}))=M(W^{l,\infty }(\mathbb{R}^{d})),
\end{equation*}%
and 
\begin{equation*}
M(C^{l}(\mathbb{T}^{d}))=M(C^{l}(\mathbb{T}^{d}),W^{l,\infty }(\mathbb{T}%
^{d}))=M(W^{l,\infty }(\mathbb{T}^{d})).
\end{equation*}
\end{remark}

The fact that the kernels of the bounded multipliers on spaces like $%
C_{0}^{l}(\mathbb{R}^{d})$ and $W^{l,1}(\mathbb{R}^{d})$ are pseudomeasures
is somewhat folklore. For the convenience of the reader we provide below a
proof of this fact. We will also use latter in the paper several formulas
that appear in this proof.

\begin{proposition}
\label{prop.b-inhom}If $k$ is a tempered distribution on $\mathbb{R}^{d}$
(or distribution on $\mathbb{T}^{d}$) such that $T_{k}\in M(C_{0}^{l}(%
\mathbb{R}^{d}))$ or $T_{k}\in M(W^{l,1}(\mathbb{R}^{d}))$ ($T_{k}\in
M(C^{l}(\mathbb{T}^{d}))$ or $T_{k}\in M(W^{l,1}(\mathbb{T}^{d}))$), then $%
\widehat{k}\in C_{b}(\mathbb{R}^{d})$ ($\widehat{k}\in \ell^{\infty }(%
\mathbb{Z}^{d})$ in the case of the torus).
\end{proposition}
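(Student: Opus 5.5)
We show that $\widehat{k}$ is bounded by testing $T_{k}$ on modulated bumps, and we get continuity by representing $\widehat{k}$ as a locally uniform limit of continuous functions. On the torus this is immediate. The characters $e_{\chi }(x)=e^{i\langle x,\chi \rangle }$, $\chi \in \mathbb{Z}^{d}$, satisfy $T_{k}e_{\chi }=\widehat{k}(\chi )e_{\chi }$. Moreover $\Vert e_{\chi }\Vert _{X}\sim \langle \chi \rangle ^{l}$ (up to constants depending only on $d,l$) both for $X=C^{l}$ and for $X=W^{l,1}$, because every derivative of order $\leq l$ is a monomial in $\chi $ times $e_{\chi }$. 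Hence $|\widehat{k}(\chi )|\langle \chi \rangle ^{l}\lesssim \Vert T_{k}\Vert \langle \chi \rangle ^{l}$, which gives $\widehat{k}\in \ell ^{\infty }(\mathbb{Z}^{d})$.

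On $\mathbb{R}^{d}$ we fix a Schwartz function $\psi $ with $\widehat{\psi }\in C_{c}^{\infty }$, $\widehat{\psi }=1$ near $0$, and for $\xi \in \mathbb{R}^{d}$ and $\varepsilon \in (0,1]$ we set
\begin{equation*}
\psi _{\xi ,\varepsilon }(x):=\varepsilon ^{d}\psi (\varepsilon x)e^{i\langle x,\xi \rangle }.
\end{equation*}
Its Fourier transform $\widehat{\psi }((\cdot -\xi )/\varepsilon )$ is a bump of width $\varepsilon $ centred at $\xi $.
\begin{itemize}
\item[(a)] \emph{Bound in $W^{l,1}$.} Leibniz' rule gives $\Vert \psi _{\xi ,\varepsilon }\Vert _{W^{l,1}}\lesssim \langle \xi \rangle ^{l}$ uniformly in $\varepsilon \leq 1$, since $\Vert \varepsilon ^{d}\psi (\varepsilon \cdot )\Vert _{L^{1}}$ is constant and each derivative falling on it gains a factor $\varepsilon $.
\item[(b)] \emph{Pairing in $W^{l,1}$.} Testing against a suitable bounded function, we get $|\langle T_{k}\psi _{\xi ,\varepsilon },e^{i\langle \cdot ,\xi \rangle }\phi (\varepsilon \cdot )\rangle |\leq \Vert T_{k}\psi _{\xi ,\varepsilon }\Vert _{L^{1}}\lesssim \Vert T_{k}\Vert \langle \xi \rangle ^{l}$.
\item[(c)] \emph{Lower bound via derivatives.} It is cleaner to use derivatives. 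Choose a multiindex $\alpha $, $|\alpha |=l$, with $|\xi ^{\alpha }|\gtrsim |\xi |^{l}$ (the largest coordinate raised to the power $l$). Then $\Vert \nabla ^{\alpha }T_{k}\psi _{\xi ,\varepsilon }\Vert _{L^{1}}\geq \Vert \mathcal{F}(\nabla ^{\alpha }T_{k}\psi _{\xi ,\varepsilon })\Vert _{L^{\infty }}$. Via Plancherel and duality this controls the average
\begin{equation*}
\Bigl|\int \widehat{k}(\eta )\,\eta ^{\alpha }\,\widehat{\psi }\Bigl(\frac{\eta -\xi }{\varepsilon }\Bigr)\overline{\widehat{\phi }}\Bigl(\frac{\eta -\xi }{\varepsilon }\Bigr)\,d\eta \Bigr|\,\varepsilon ^{-d}.
\end{equation*}
For $|\xi |\geq 1$ and $\varepsilon $ small, $|\eta ^{\alpha }|\sim |\xi |^{l}$ on the support.
\item[(d)] \emph{Conclusion for $W^{l,1}$.} The averages $A_{\varepsilon }(\xi )$ of $\widehat{k}$ against the mollifier $\varepsilon ^{-d}\rho ((\cdot -\xi )/\varepsilon )$ therefore satisfy $|A_{\varepsilon }(\xi )|\lesssim \Vert T_{k}\Vert $ uniformly in $\xi $ and $\varepsilon $. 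For $|\xi |\leq 1$ we use the zero-order part of the norm directly.
\end{itemize}
Since $\widehat{k}\in L_{loc}^{1}$, Lebesgue differentiation then gives $\widehat{k}\in L^{\infty }$.

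For $C_{0}^{l}$ we argue by duality: the adjoint of $T_{k}$ is $T_{\tilde{k}}$, which is bounded on $(C_{0}^{l})^{\ast }=\mathcal{M}^{-l}$. We test on $\psi _{\xi ,\varepsilon }\,dx\in \mathcal{M}^{-l}$ and use $\Vert \cdot \Vert _{\mathcal{M}^{-l}}\geq \langle \xi \rangle ^{-l}\Vert \widehat{\cdot }\Vert _{\infty }$-type control near frequency $\xi $. Alternatively, we use Proposition \ref{prop.g} directly: $\nabla ^{\alpha }k=\sum _{|\beta |\leq l}\nabla ^{\beta }\nu _{\beta ,\alpha }$ gives
\begin{equation*}
(i\xi )^{\alpha }\widehat{k}(\xi )=\sum _{|\beta |\leq l}(i\xi )^{\beta }\widehat{\nu }_{\beta ,\alpha }(\xi ),
\end{equation*}
with $\widehat{\nu }_{\beta ,\alpha }\in C_{b}$. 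This formula is probably the one the paper reuses later. Taking $\alpha =0$ handles $|\xi |\leq 1$. Taking $\alpha =l\mathbf{e}_{j}$ with $|\xi _{j}|\geq |\xi |/\sqrt{d}$ handles $|\xi |\geq 1$, because the right side is $O(\langle \xi \rangle ^{l})$. This also gives continuity of $\widehat{k}$ away from the hyperplanes $\xi _{j}=0$, and on all of $\mathbb{R}^{d}$ by covering with the sets $\{|\xi _{j}|>|\xi |/(2\sqrt{d})\}$ plus the ball where $\alpha =0$ is used, where $\widehat{k}=\widehat{\nu }_{0,0}+\sum \dots $ is visibly continuous. For $W^{l,1}$ the same decomposition holds with $\nabla ^{\alpha }k\in \mathcal{M}^{-l}$ obtained from $T_{k}^{\ast }$ bounded on $(W^{l,1})^{\ast }$: pairing $\nabla ^{\alpha }k$ against Schwartz $f$ gives $|\langle \nabla ^{\alpha }k,f\rangle |\leq \Vert T_{k}\Vert \Vert f\Vert _{W^{l,1}}$, and elements of $(W^{l,1})^{\ast }$ are of the form $\sum \nabla ^{\beta }g_{\beta }$ with $g_{\beta }\in L^{\infty }$.

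The main obstacle is exactly this last point for $W^{l,1}$. There $\nabla ^{\alpha }k=\sum _{\beta }\nabla ^{\beta }g_{\beta }$ with $g_{\beta }\in L^{\infty }$, not measures, so the $\widehat{g}_{\beta }$ are only distributions. I would instead use the estimate $|\langle \nabla ^{\alpha }k,f\rangle |\leq \Vert T_{k}\Vert \Vert f\Vert _{W^{l,1}}$ with $f=\psi _{\xi ,\varepsilon }$ as in steps (a)–(d). Since $\Vert \psi _{\xi ,\varepsilon }\Vert _{W^{l,1}}\lesssim \langle \xi \rangle ^{l}$ and $\langle \nabla ^{\alpha }k,\psi _{\xi ,\varepsilon }\rangle $ is an $\varepsilon $-average of $\eta ^{\alpha }\widehat{k}(\eta )$ near $\xi $, this gives boundedness. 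For continuity the same pairing shows that the averages are equicontinuous in $\xi $ uniformly in $\varepsilon $: differences of $\psi _{\xi ,\varepsilon }$ in $\xi $ are small in $W^{l,1}$ uniformly in $\varepsilon $ only after multiplying by a smooth cutoff, which is the step I would check most carefully.
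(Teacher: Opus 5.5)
\textbf{What matches the paper, and what is a valid alternative.} Your torus argument is the paper's. So is your $C_{0}^{l}$ argument: you use the decomposition $\nabla^{\alpha}k=\sum_{|\beta|\leq l}\nabla^{\beta}\nu_{\beta,\alpha}$ from Proposition \ref{prop.g}, with $\alpha=0$ near the origin and $\alpha=l\mathbf{e}_{j}$ on the cones $|\xi_{j}|\gtrsim|\xi|$. For $W^{l,1}$, your boundedness argument (a)--(d) is sound but takes a different route. You reach frequency $\xi$ by modulating a bump. The paper instead dilates the symbol: it shows $\Vert\widehat{k}(t\cdot D)\Vert_{W^{l,1}\rightarrow\dot{W}^{l,1}}\leq C$ for $t\geq1$, then tests one fixed $\rho$ with $\widehat{\rho}>0$ on the annulus $2B\backslash B$. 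Either route works.

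\textbf{The gap: continuity of $\widehat{k}$ for $W^{l,1}$.} This is part of the claim $\widehat{k}\in C_{b}(\mathbb{R}^{d})$, and the mechanism you propose cannot deliver it. The operator bound links $A_{\varepsilon}(\xi)$ and $A_{\varepsilon}(\xi')$ only through the size of $\psi_{\xi,\varepsilon}-\psi_{\xi',\varepsilon}$. Already
$\Vert\psi_{\xi,\varepsilon}-\psi_{\xi',\varepsilon}\Vert_{L^{1}}=\int|\psi(y)|\,|e^{i\langle y,(\xi-\xi')/\varepsilon\rangle}-1|\,dy$
tends to $(4/\pi)\Vert\psi\Vert_{L^{1}}$, not to $0$, as $\varepsilon\rightarrow0$ with $\xi\neq\xi'$ fixed. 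So no equicontinuity uniform in $\varepsilon$ comes out. Multiplying by a fixed cutoff in $x$ would make the difference small, but it destroys the $\varepsilon$-localization in frequency.

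\textbf{The fix.} Continuity comes from a qualitative fact, not a norm estimate: $T_{k}$ maps into $L^{1}$, and $L^{1}$ functions have continuous Fourier transforms. This is already in your step (c). Since $T_{k}\psi_{\xi,\varepsilon}\in W^{l,1}\subset L^{1}$, the product $\widehat{k}(\cdot)\,\widehat{\psi}((\cdot-\xi)/\varepsilon)$ agrees a.e. with a continuous function. Hence $\widehat{k}$ has a continuous representative on the neighbourhood of $\xi$ where $\widehat{\psi}((\cdot-\xi)/\varepsilon)\equiv1$. The paper does this in one stroke: it takes a single Schwartz $\rho$ with $\widehat{\rho}>0$ everywhere and writes $\widehat{k}=\widehat{T_{k}\rho}/\widehat{\rho}$.

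\textbf{Two further remarks.} The same observation makes your Plancherel/duality/Lebesgue-differentiation detour unnecessary. Near $\xi$ you have, pointwise, $|\eta^{\alpha}\widehat{k}(\eta)|\leq\Vert\nabla^{\alpha}T_{k}\psi_{\xi,\varepsilon}\Vert_{L^{1}}\lesssim\Vert T_{k}\Vert\langle\xi\rangle^{l}$, and this holds even with $\varepsilon$ fixed. Also, drop the estimate $|\langle\nabla^{\alpha}k,f\rangle|\leq\Vert T_{k}\Vert\Vert f\Vert_{W^{l,1}}$ from your last paragraph. The pairing $\langle\nabla^{\alpha}k,f\rangle$ is a point value of $\nabla^{\alpha}T_{k}$ applied to a reflection of $f$, and $W^{l,1}$ does not control point values. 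For $k=\delta_{0}$ and $|\alpha|=l$ it would bound $\partial^{\alpha}f(0)$ by $\Vert f\Vert_{W^{l,1}}$, which is false. Even taken at face value with $f=\psi_{\xi,\varepsilon}$, it bounds an unnormalized integral, so it is off by the factor $\varepsilon^{-d}$. Your steps (a)--(d), which pair $\nabla^{\alpha}T_{k}\psi_{\xi,\varepsilon}$ against a bounded function, are the correct version.
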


\noindent \textbf{Proof.} This follows by the same arguments as the ones in
the proof of \cite[Lemma 8]{CE-F}. In fact, the argument can be easily
extracted from the equality (\ref{dec-miu}) above. Indeed, writing (\ref%
{dec-miu}) for $\alpha =0$ and taking the Fourier transform, we get%
\begin{equation*}
\widehat{k}(\eta )=\sum_{|\beta |\leq l}(-i\eta )^{\beta }\widehat{\nu }%
_{\beta }(\eta ),
\end{equation*}%
on $\mathbb{R}^{d}$. This shows that $\widehat{k}\in C_{b}(B(0,2))$. By a
similar argument, we can obtain the continuity and boundedness of $\widehat{k%
}$ far away from the origin. Indeed, using (\ref{dec-miu}) for $\alpha =l%
\mathbf{e}_{j}$ we have 
\begin{equation}
\partial _{j}^{l}k=\sum_{|\beta |\leq l}\nabla ^{\beta }\nu _{\beta ,j},
\label{k-j-dec}
\end{equation}%
for some measures $\nu _{\beta ,j}\in \mathcal{M}(\mathbb{R}^{d})$
(depending on $k$ and $j$). Taking the Fourier transform we get%
\begin{equation*}
(-i\eta _{j})^{l}\widehat{k}(\eta )=\sum_{|\beta |=l}(-i\eta )^{\beta }%
\widehat{\nu }_{\beta ,j}(\eta )+\sum_{|\beta |<l}(-i\eta )^{\beta }\widehat{%
\nu }_{\beta ,j}(\eta ),
\end{equation*}%
on $\mathbb{R}^{d}$. In particular, for any $\eta \in \mathbb{R}^{d}$ with $%
\eta _{j}\neq 0$ we have 
\begin{equation}
\widehat{k}(\eta )=\sum_{|\beta |=l}\frac{\eta ^{\beta }}{\eta _{j}^{l}}%
\widehat{\nu }_{\beta ,j}(\eta )+\sum_{|\beta |<l}\frac{(-i\eta )^{\beta }}{%
(-i\eta _{j})^{l}}\widehat{\nu }_{\beta ,j}(\eta ).  \label{k-niu}
\end{equation}

It suffices to verify the continuity and boundedness of $\widehat{k}$ on the
sets $A_{j}:=\{\eta \mid |\eta _{j}|>|\eta |/4\}\backslash B(0,1)$, for each 
$j\in \{1,...,d\}$.

Now, suppose that $T_{k}\in M(W^{l,1}(\mathbb{R}^{d}))$. We first note that
for any $f\in C_{c}^{\infty }(\mathbb{R}^{d})$ and any $t>0$ we have 
\begin{equation*}
\widehat{k}(t\cdot D)f(x)=\widehat{k}(D)f(t\cdot )(x/t),
\end{equation*}%
on $\mathbb{R}^{d}$. Hence, for $t\geq 1$ we can write 
\begin{eqnarray*}
\left\Vert \widehat{k}(t\cdot D)f\right\Vert _{\dot{W}^{l,1}} &\leq
&\left\Vert \widehat{k}(D)f(t\cdot )(\cdot /t)\right\Vert
_{W^{l,1}}=t^{d-l}\left\Vert \widehat{k}(D)f(t\cdot )\right\Vert _{W^{l,1}},
\\
&\lesssim &t^{d-l}\left\Vert f(t\cdot )\right\Vert _{M(W^{l,1})}\sim
\sum_{j=0}^{d}t^{j-l}\left\Vert \nabla ^{j}f\right\Vert _{L^{1}}\lesssim
\left\Vert f\right\Vert _{W^{l,1}},
\end{eqnarray*}
and therefore 
\begin{equation}
\left\Vert \widehat{k}(t\cdot D)\right\Vert _{W^{l,1}\rightarrow \dot{W}%
^{l,1}}\leq C,  \label{1-bound}
\end{equation}
for some real constant $C>0$, uniformly for any $t\geq 1$.

Now, suppose $m$ is a symbol such that $m(D):W^{l,1}(\mathbb{R}%
^{d})\rightarrow \dot{W}^{l,1}(\mathbb{R}^{d})$. If $\rho $ is a Schwartz
function with $\widehat{\rho }>0$ on $\mathbb{R}^{d}$, then we have $\mu
_{j}:=\partial _{j}^{l}m(D)\rho \in \mathcal{M}(\mathbb{R}^{d})$, for $j\in
\{1,..,d\}$, and (as in \cite[Lemma 6]{CE-F}) 
\begin{equation*}
\left\Vert m\right\Vert _{L^{\infty }(2B\backslash \{|\xi
_{j}|<1/2\})}=\left\Vert \frac{\widehat{\mu }_{j}}{\xi _{j}{}^{l}\widehat{%
\rho }}\right\Vert _{L^{\infty }(2B\backslash \{|\xi _{j}|<1/2\})}\lesssim
\left\Vert \widehat{\mu }_{j}\right\Vert _{L^{\infty }(2B\backslash \{|\xi
_{j}|<1/2\})}\lesssim \left\Vert \mu _{j}\right\Vert _{\mathcal{M}}\leq C.
\end{equation*}

By adding up over all $j\in \{1,..,d\}$, we get $\left\Vert m\right\Vert
_{L^{\infty }(2B\backslash B)}\lesssim C$. Applying this to $m(D)=\widehat{k}%
(t\cdot D)$, for any $t\geq 1$, we obtain by (\ref{1-bound}) that 
\begin{equation*}
\left\Vert \widehat{k}\right\Vert _{L^{\infty }(\mathbb{R}^{d}\backslash
B)}\lesssim C.
\end{equation*}

It remains to notice that, since $\widehat{k}(D)\rho \in \mathcal{M}(\mathbb{%
R}^{d})$, by taking the Fourier transform, we obtain the continuity on $%
\mathbb{R}^{d}$ and the boundedness on $2B$ of $\widehat{k}$.

In the case of the torus the argument is straightforward. For $X=W^{l,1}(%
\mathbb{R}^{d})$, $W^{l,\infty }(\mathbb{R}^{d})$ or $C^{l}(\mathbb{T}^{d})$%
, we have $0<\left\Vert e_{\chi }\right\Vert _{X}<\infty $, for any $\chi
\in \mathbb{Z}^{d}$, where $e_{\chi }=\exp (i\left\langle \chi ,\cdot
\right\rangle )$. Therefore, if $T_{k}\in M(X)$ we have%
\begin{equation*}
|\widehat{k}(\chi )|\left\Vert e_{\chi }\right\Vert _{X}=\left\Vert
T_{k}e_{\chi }\right\Vert _{X}\leq \left\Vert T_{k}\right\Vert
_{M(X)}\left\Vert e_{\chi }\right\Vert _{X},
\end{equation*}
which gives us $|\widehat{k}(\chi )|\leq \left\Vert T_{k}\right\Vert _{M(X)}$%
, for any $\chi \in \mathbb{Z}^{d}$. \hfill $\square $

\section{The Wiener property}

\label{sec.W}

The Wiener lemma on the singularities of measures (see Lemma \ref{lem.Wien})
implies that, for any given finite measure $\mu $ on $\mathbb{R}^{d}$ and
for any $\omega \in \mathbb{S}^{d-1}$, the limit 
\begin{equation}
\lim_{t,r\rightarrow \infty }\strokedint_{B(t\omega ,r)}\widehat{\mu }(\eta
)d\eta ,  \label{t-r}
\end{equation}%
exists and does not depend on $\omega $.

One can ask whether there are other types of distributions that have a
similar property. In (\ref{t-r}) the parameters $t$ and $r$ need not be
related. In what follows we will consider somewhat weaker forms of (\ref{t-r}%
) when the radius $r$ is a function on $t$ with certain reasonable
properties. We say that a function $r:(0,\infty )\rightarrow (0,\infty )$ is
a \textit{growth function} if $r$ is nondecreasing and $\lim_{t\rightarrow
\infty }r(t)=\infty $. Using this type of functions to control the growth of
the balls we introduce the \textit{Wiener property}:

\noindent \textbf{The Wiener property on }$\mathbb{R}^{d}$\textbf{. }Let $k$
be a tempered distribution on $\mathbb{R}^{d}$ such that $\widehat{k}\in
L_{loc}^{1}(\mathbb{R}^{d})$. We say that $k$ has the Wiener property with
respect to the growth function $r:(0,\infty )\rightarrow (0,\infty )$, if
for any fixed $\omega \in \mathbb{S}^{d-1}$ the limit 
\begin{equation}
\lim_{t\rightarrow \infty }\strokedint_{B(t\omega ,r(t))}\widehat{k}(\eta
)d\eta ,  \label{WB-1}
\end{equation}%
exists and does not depend on $\omega $.

\bigskip

\noindent \textbf{The Wiener property on }$\mathbb{T}^{d}$\textbf{. }Let $k$
be a distribution on $\mathbb{T}^{d}$. We say that $k$ has the Wiener
property with respect to the growth function $r:(0,\infty )\rightarrow
(0,\infty )$, if for any fixed $\omega \in \mathbb{S}^{d-1}$ the limit%
\footnote{%
Recall that $B_{d}:=B\cap \mathbb{Z}^{d}$.} 
\begin{equation}
\lim_{t\rightarrow \infty }\frac{1}{|B(t\omega ,r(t))|}\sum_{\chi \in
B_{d}(t\omega ,r(t))}\widehat{k}(\chi ),  \label{WB-2}
\end{equation}%
exists and does not depend on $\omega $.

\bigskip

In the case a tempered distribution $k$ has the Wiener on $\mathbb{R}^{d}$
(or $\mathbb{T}^{d}$) with respect to a growth function $r$, the common
value of the limits (\ref{WB-1}) (or (\ref{WB-2})) will be called the 
\textit{Wiener number} of $k$ with respect to $r$. By extrapolation we will
say that a multiplier $T_{k}=\widehat{k}(D)$ has the Wiener property if its
corresponding convolution kernel $k$ has the Wiener property. The same
applies to its Wiener number.

The bounded multipliers of $L^{1}(\mathbb{T}^{d})$ or $L^{\infty }(\mathbb{T}%
^{d})$ are given by convolution with finite measures. Hence, by Wiener's
lemma any bounded multiplier on $L^{p}(\mathbb{T}^{d})$, with $p\in
\{1,\infty \}$, has the Wiener property with respect to any growth function.
On the other hand, if $p\in (1,\infty )$, the elements of $M(W^{l,p}(\mathbb{%
T}^{d}))=M(L^{p}(\mathbb{T}^{d}))$ might fail having a similar property. For
instance, the Riesz projection $P_{+}=\mathbf{1}_{[0,\infty )^{d}}(D)$
belongs to $M(L^{p}(\mathbb{T}^{d}))$ while the averages of $\mathbf{1}%
_{[0,\infty )^{d}}$ on the balls $B_{d}(t\omega ,r(t)) $ have limits that
depend on $\omega \in \mathbb{S}^{d-1}$ as long as $r$ does not grow too
fast. If $l\in \mathbb{N}^{\ast }$, the elements of $M(W^{l,p}(\mathbb{T}%
^{d}))$, with $p\in \{1,\infty \}$, are not in general given by convolution
with measures. Despite this we will show that they still have Wiener
property with respect to various growth functions. In our applications it is
important to not have growth functions that grow too fast. In general we
would like to have growth functions $r$ of sub-linear growth, i.e., 
\begin{equation*}
\lim_{t\rightarrow \infty }\dfrac{r(t)}{t}=0.
\end{equation*}

Nevertheless, if this is not achieved, a \textquotedblleft
weaker\textquotedblright\ condition still suffices in practice. Namely one
can consider instead the Wiener property with respect to each element of a
family $(r_{\varepsilon })_{\varepsilon \in (0,\varepsilon_0)}$ (for some $%
\varepsilon_0>0$) of growth functions with 
\begin{equation}
\limsup_{t\rightarrow \infty }\dfrac{r_{\varepsilon }(t)}{t}\leq \varepsilon
.  \label{r-epsilon}
\end{equation}

\begin{remark}
\label{rem.r-ep}The linear growth functions $r_{\varepsilon }(t)=\varepsilon
t$, for $\varepsilon \in (0,\varepsilon_0)$, form a family satisfied (\ref%
{r-epsilon}). Note also that, by defining $r_{\varepsilon }(t)=r(t)$, for
any $\varepsilon \in (0,1)$, where $r$ is a sublinear growth function, then (%
\ref{r-epsilon}) is trivially satisfied.
\end{remark}


\subsection{The Wiener property of multipliers on $W^{l,\infty }(\mathbb{T}%
^{d})$}

\label{sec.inf} In the proof of Theorem \ref{th.GC} we only use the Wiener
property on $\mathbb{T}^{d}$. However, for technical reasons, it is
convenient to deal first with the Euclidean version of this property. Also,
in this case it is convenient to work with the space $C_{0}^{l}(\mathbb{R}%
^{d})$ rather than $W^{l,\infty }(\mathbb{R}^{d})$.

\begin{lemma}
\label{lem.W} If $T_{k}\in M(C_{0}^{l}(\mathbb{R}^{d}))$, then $k$ has the
Wiener property on $\mathbb{R}^{d}$ with respect to any growth function $r$
for which $\lim_{t\rightarrow \infty }(r(t)/t)=0$.
\end{lemma}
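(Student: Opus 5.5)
The plan is to read off the asymptotics of $\widehat{k}$ from the decomposition used in the proof of Proposition \ref{prop.b-inhom}, apply the classical Wiener lemma to the measures appearing there, and then use Lemma \ref{lem.jfa} to show that the resulting limit does not depend on $\omega$.

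Fix $\omega\in\mathbb{S}^{d-1}$. Choose $j$ with $|\omega_j|\geq 1/\sqrt{d}>1/4$. Since $r(t)/t\to 0$, for large $t$ the ball $B(t\omega,r(t))$ lies in the cone-type region $A_j=\{|\eta_j|>|\eta|/4\}\setminus B(0,1)$, and every $\eta$ in it satisfies $|\eta|\sim t$. On $A_j$ we use formula (\ref{k-niu}), which comes from (\ref{k-j-dec}) with measures $\nu_{\beta,j}\in\mathcal{M}(\mathbb{R}^d)$; these exist by Proposition \ref{prop.g}. We then estimate the terms separately.

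\emph{Lower order terms.} For $|\beta|<l$, since $|\widehat{\nu}_{\beta,j}|\leq\|\nu_{\beta,j}\|_{\mathcal{M}}$, these terms are $O(|\eta|^{|\beta|-l})=O(t^{-1})$ uniformly on the ball. So their averages tend to $0$.

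\emph{Top order terms.} For $|\beta|=l$, the factor $\eta^\beta/\eta_j^l$ is $0$-homogeneous and smooth on $\{|\eta_j|>|\eta|/4\}$. On the ball it therefore differs from $\omega^\beta/\omega_j^l$ by $O(r(t)/t)=o(1)$ uniformly. Combined with the bound on $\widehat{\nu}_{\beta,j}$, the average of each top order term equals
\[
\frac{\omega^\beta}{\omega_j^l}\strokedint_{B(t\omega,r(t))}\widehat{\nu}_{\beta,j}(\eta)\,d\eta+o(1).
\]
By the Wiener lemma (Lemma \ref{lem.Wien}), this average tends to $c_{\beta,j}:=\nu_{\beta,j}(\{0\})$, since $t\to\infty$ and $r(t)\to\infty$. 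Hence the limit (\ref{WB-1}) exists and equals
\[
L(\omega)=\sum_{|\beta|=l}c_{\beta,j}\,\frac{\omega^\beta}{\omega_j^l}.
\]
The limit is intrinsic, so $L$ is well defined on the whole sphere: on overlaps of different $j$ the expressions must agree. It is continuous, and we extend it $0$-homogeneously to $\mathbb{R}^d\setminus\{0\}$.

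\emph{Independence of $\omega$.} It remains to show that $L$ is constant, and for this I would apply Lemma \ref{lem.jfa} to the homogeneous symbol $L$. By the same scaling computation as in (\ref{1-bound}), now with $C_0^l$ instead of $W^{l,1}$, the operators $\widehat{k}(tD)$ are bounded from $\dot C^l_0(\mathbb{R}^d)$ to $\dot W^{l,\infty}(\mathbb{R}^d)$, uniformly in $t\geq 1$. Next we show that $\widehat{k}(t\,\cdot)\to L$ as $t\to\infty$ in the sense of distributions on $\mathbb{R}^d\setminus\{0\}$. Using (\ref{k-niu}), this reduces to $\widehat{\nu}_{\beta,j}(t\,\cdot)\to\nu_{\beta,j}(\{0\})$ weakly on compact sets away from the origin. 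That holds because $\widehat{\nu}(t\xi)$ tested against $\varphi\in C_c^\infty$ equals $\int t^{-d}\widehat{\varphi}(x/t)\,d\nu(x)$, which tends to $\widehat{\varphi}(0)\,\nu(\{0\})$ by dominated convergence after rescaling. Then, for Schwartz $f$ with $\widehat{f}$ supported away from $0$ and Schwartz test functions $g$, the pairings $\langle\nabla^l\widehat{k}(tD)f,g\rangle$ converge to $\langle\nabla^l L(D)f,g\rangle$. By duality, $\|\nabla^l L(D)f\|_{L^\infty}\lesssim\|f\|_{\dot W^{l,\infty}}$ on a dense enough class.

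\emph{Main obstacle.} I expect the delicate step to be extending this bound to all Schwartz $f$. The symbol $L$ is bounded but discontinuous at $0$, so low frequencies must be handled, for instance via a Littlewood–Paley cutoff and a limiting argument. Once that is done, Lemma \ref{lem.jfa} gives that $L$ is constant, which is exactly the independence of the limit from $\omega$.
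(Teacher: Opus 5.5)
Your proposal is correct. Its first half (freezing $\eta^\beta/\eta_j^l$ in (\ref{k-niu}) using $r(t)/t\to 0$, then applying Lemma \ref{lem.Wien}) is exactly the paper's Lemma \ref{lem.mean}. The second half, independence of $\omega$, takes a different route.

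\textbf{The paper's route.} It works with the ball-averaged symbols $m^t(\xi)=|B(t\xi,r(t))|^{-1}\int_{B(t\xi,r(t))}\widehat k$. It writes $m^t(D)F(0)$ as an average over $\eta\in B(0,r(t))$ of $\widehat k_t(D)(e_{\eta/t}F)(0)$. The main term is bounded by the scaling computation, and the modulation error $II_t$ is $O(r(t)/t)$. It then passes to the limit using the pointwise convergence $m^t\to m^\infty$ and dominated convergence.

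\textbf{Your route.} You identify the weak limit of the plain dilates $\widehat k(t\cdot)$ directly from (\ref{k-niu}) and $\widehat\nu(t\cdot)\rightharpoonup\nu(\{0\})$, and observe that it is the same homogeneous $L$. This avoids the modulation estimate and shows that sublinearity of $r$ is used only to freeze the coefficients in step one. The paper's second step instead needs only pointwise convergence of the averages plus uniform bounds on $\widehat k_t(D)$. It does not reuse the measure decomposition to identify the limit.

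\textbf{The step you flag as the main obstacle is not a real one.} Since $\|\widehat k(t\cdot)\|_{L^\infty}\le\|\widehat k\|_{L^\infty}$ and $\{0\}$ is Lebesgue-null, convergence in $\mathcal D'(\mathbb R^d\setminus\{0\})$ upgrades to weak-$*$ convergence in $L^\infty(\mathbb R^d)$. To see this, approximate any $L^1$ function by test functions supported away from $0$. You can then pair with $(i\xi)^\alpha\widehat f\,\overline{\widehat g}\in L^1$ for arbitrary Schwartz $f,g$, so no low-frequency cutoff is needed.

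\textbf{Slips to fix.}
\begin{itemize}
\item The pairing is $\int\widehat\varphi(tx)\,d\nu(x)$, not $\int t^{-d}\widehat\varphi(x/t)\,d\nu(x)$; the latter tends to $0$.
\item Scaling gives $\|\nabla^l\widehat k(tD)f\|_{L^\infty}\lesssim\|\nabla^l f\|_{L^\infty}+\sum_{j<l}t^{j-l}\|\nabla^j f\|_{L^\infty}$. So the homogeneous bound holds only in the limit $t\to\infty$, not uniformly from $\dot C^l_0$. That limit is all you actually use.
\item The inequality $1/\sqrt d>1/4$ fails for $d\ge 16$. Use cones $\{|\eta_j|>|\eta|/(2\sqrt d)\}$ instead.
\end{itemize}
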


Note that, by Proposition \ref{prop.g}, $\widehat{k}\in L_{loc}^{1}(\mathbb{R%
}^{d})$ and hence, the means of $\widehat{k}$ are well-defined on any ball.
We first need to prove that the limit of the averages of $\widehat{k}$ exist
in any direction. This is achieved by the following lemma.

\begin{lemma}
\label{lem.mean} Let $r$ be a growth function such that $\lim_{t\rightarrow
\infty }(r(t)/t)=0$. If $T_{k}\in M(C_{0}^{l}(\mathbb{R}^{d}))$, then for
any fixed $\xi \in \mathbb{R}^{d}\backslash \{0\}$ the limit 
\begin{equation}
m^{\infty }(\xi ):=\lim_{t\rightarrow \infty }\strokedint_{B(t\xi ,r(t))}%
\widehat{k}(\eta )d\eta ,  \label{Wiener-m}
\end{equation}%
exists.
\end{lemma}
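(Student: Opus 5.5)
We use the representation from Proposition \ref{prop.g} and the formula (\ref{k-niu}) from the proof of Proposition \ref{prop.b-inhom}. Fix $\xi\neq 0$ and pick $j$ with $\xi_j\neq 0$. For all large $t$, the ball $B(t\xi,r(t))$ lies in the cone $\{|\eta_j|>c|\eta|\}$ and far from the origin, because $r(t)/t\to 0$. On this ball, (\ref{k-niu}) gives
\begin{equation*}
\widehat{k}(\eta)=\sum_{|\beta|=l}\frac{\eta^{\beta}}{\eta_j^{l}}\widehat{\nu}_{\beta,j}(\eta)+\sum_{|\beta|<l}\frac{(-i\eta)^{\beta}}{(-i\eta_j)^{l}}\widehat{\nu}_{\beta,j}(\eta),
\end{equation*}
with finite measures $\nu_{\beta,j}$. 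The lower-order terms are $O(|\eta|^{-1})$ uniformly on the ball, since the $\widehat{\nu}$ are bounded. Their averages over $B(t\xi,r(t))$ therefore tend to $0$, and it suffices to treat the top-order terms.

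For a top-order term, write $h_\beta(\eta):=\eta^\beta/\eta_j^l$, which is $0$-homogeneous and smooth on the cone. On $B(t\xi,r(t))$ we have $|h_\beta(\eta)-h_\beta(t\xi)|\lesssim r(t)/t\to 0$, again using sublinearity. Hence
\begin{equation*}
\strokedint_{B(t\xi,r(t))}h_\beta\widehat{\nu}_{\beta,j}\,d\eta = h_\beta(\xi)\strokedint_{B(t\xi,r(t))}\widehat{\nu}_{\beta,j}\,d\eta+o(1),
\end{equation*}
using $\|\widehat\nu\|_\infty\le\|\nu\|_{\mathcal M}$.

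The remaining point is the Wiener-type statement (\ref{t-r}) for a finite measure $\nu$: the averages of $\widehat\nu$ over $B(t\xi,r)$ converge to $\nu(\{0\})$ as $t,r\to\infty$. This is what the paper calls Lemma \ref{lem.Wien}, and it is presumably stated later. If it is not yet available, we prove it directly. By Fubini,
\begin{equation*}
\strokedint_{B(a,r)}\widehat\nu\,d\eta=\int e^{-i\langle x,a\rangle}\,\phi(rx)\,d\nu(x),\qquad \phi(y):=\strokedint_{B(0,1)}e^{-i\langle y,\zeta\rangle}d\zeta .
\end{equation*}
Here $\phi$ is continuous, $|\phi|\le 1$, $\phi(0)=1$, and $\phi(y)\to0$ as $|y|\to\infty$. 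Since $r(t)\to\infty$, dominated convergence gives $\phi(r(t)x)\to\mathbf 1_{\{0\}}(x)$ pointwise, so the integral tends to $\nu(\{0\})$. Note that only $r\to\infty$ is needed here, and the oscillating factor $e^{-i\langle x,t\xi\rangle}$ is irrelevant because of the domination.

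Putting this together,
\begin{equation*}
m^\infty(\xi)=\sum_{|\beta|=l}\frac{\xi^\beta}{\xi_j^l}\,\nu_{\beta,j}(\{0\}),
\end{equation*}
so the limit exists. The main delicate point is the uniform control on the ball: we need both the cone containment and the near-constancy of $h_\beta$. Both rely on $r(t)/t\to0$, which is precisely where the sublinearity hypothesis enters. The independence of $\omega$ is not part of this lemma and is left to Lemma \ref{lem.W}. For that, one would combine the formula above, which is $0$-homogeneous and continuous in $\xi$, with Lemma \ref{lem.jfa}.
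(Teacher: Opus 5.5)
Your proof is correct and follows the paper's argument. You fix $j$ with $\xi_j\neq 0$, use (\ref{k-niu}) on $B(t\xi,r(t))$, discard the lower-order terms, freeze the $0$-homogeneous coefficients $\eta^\beta/\eta_j^l$ at $\xi$ with an $O(r(t)/t)$ error, and conclude with Wiener's lemma applied to $\sum_{|\beta|=l}(\xi^\beta/\xi_j^l)\nu_{\beta,j}$. The only difference is that you prove the ball version of Lemma \ref{lem.Wien} directly by Fubini and dominated convergence, using the normalized Fourier transform of $\mathbf{1}_{B(0,1)}$. This is a more direct route than the paper's, which proves the cube case and then passes to balls via a Whitney-type decomposition.
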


\noindent \textbf{Proof.} Fix some $\xi \in \mathbb{R}^{d}\backslash \{0\}$
and let $j\in \{1,..,d\}$ be such that $\xi _{j}\neq 0$. Since $k$ satisfies
the conditions of Proposition \ref{prop.g}, we have (\ref{k-niu}). When $%
t\rightarrow \infty $ and $\eta \in B(t\xi ,r(t))$, we have $\eta ^{\beta
}/\eta _{j}^{l}=(\xi ^{\beta }/\xi _{j}^{l})+o(1)$, for any multiindex $%
\beta $ with $|\beta |=l$, and $\eta ^{\beta }/\eta _{j}^{l}=o(1)$, for any
multiindex $\beta $ with $|\beta |<l$. Since, the functions $\widehat{\nu }%
_{\beta ,j}$ are bounded, we get 
\begin{eqnarray}
\strokedint_{B(t\xi ,r(t))}\widehat{k}(\eta )d\eta &=&\strokedint_{B(t\xi
,r(t))}\left( \sum_{|\beta |=l}\frac{\xi ^{\beta }}{\xi _{j}^{l}}\widehat{%
\nu }_{\beta ,j}(\eta )\right) d\eta +o(1)  \notag \\
&=&\strokedint_{B(t\xi ,r(t))}\widehat{\nu }_{(\xi ,j)}(\eta )d\eta +o(1),
\label{1-mean-miu}
\end{eqnarray}%
where $\widehat{\nu }_{(\xi ,j)}$ is the Fourier transform of the measure 
\begin{equation}
\nu _{(\xi ,j)}:=\sum_{|\beta |=l}\frac{\xi ^{\beta }}{\xi _{j}^{l}}\nu
_{\beta ,j}.  \label{1-niu-j}
\end{equation}

By Wiener's lemma on $\mathbb{R}^{d}$ (see Lemma \ref{lem.Wien} in the
Appendix) we have 
\begin{equation*}
\lim_{t\rightarrow \infty }\strokedint_{B(t\xi ,r(t))}\widehat{\nu }_{(\xi
,j)}(\eta )d\eta =c\cdot \nu _{(\xi ,j)}(\{0\})\text{.}
\end{equation*}

In particular, the left hand side of (\ref{1-mean-miu}) has a limit and 
\begin{equation}
\lim_{t\rightarrow \infty }\strokedint_{B(t\xi ,r(t))}\widehat{k}(\eta
)d\eta =c\cdot \nu _{(\xi ,j)}(\{0\})\text{.}  \label{1-k-niu-j}
\end{equation}

This proves Lemma \ref{lem.mean}. \hfill $\square $

\begin{remark}
\label{rem.2} We can also get some useful information if we analyse $|%
\widehat{k}|^{2}$ instead of $\widehat{k}$. As in (\ref{1-mean-miu}) we have 
\begin{equation}
\strokedint_{B(t\xi ,r(t))}|\widehat{k}(\eta )|^{2}d\eta =\strokedint%
_{B(t\xi ,r(t))}\left\vert \widehat{\nu }_{(\xi ,j)}(\eta )\right\vert
^{2}d\eta +o(1),  \notag
\end{equation}%
with $\nu _{(\xi ,j)}$ defined by (\ref{1-niu-j}). In this case, Wiener's
theorem on $\mathbb{R}^{d}$ (see Theorem \ref{th.W} in the Appendix) gives
us 
\begin{equation*}
\lim_{t\rightarrow \infty }\strokedint_{B(t\xi ,r(t))}\left\vert \widehat{%
\nu }_{(\xi ,j)}(\eta )\right\vert ^{2}d\eta =c\sum_{\tau \in \mathbb{R}%
^{d}}\left\vert \nu _{(\xi ,j)}(\{\tau \})\right\vert ^{2}\text{.}
\end{equation*}

Therefore, 
\begin{equation}
\lim_{t\rightarrow \infty }\strokedint_{B(t\xi ,r(t))}|\widehat{k}(\eta
)|^{2}d\eta =c\sum_{\tau \in \mathbb{R}^{d}}\left\vert \nu _{(\xi
,j)}(\{\tau \})\right\vert ^{2}\text{.}  \label{k-niu-j}
\end{equation}
\end{remark}

\bigskip

One can prove now Lemma \ref{lem.W}.

\bigskip

\noindent \textbf{Proof of Lemma \ref{lem.W}.} For each $\xi \in \mathbb{R}%
^{d}\backslash \{0\}$ and $t>0$ consider the means

\begin{equation*}
m^{t}(\xi ):=\strokedint_{B(t\xi ,r(t))}\widehat{k}(\eta )d\eta =\strokedint%
_{B(0,r(t))}\widehat{k}(t\xi +\eta )d\eta .
\end{equation*}

Observe that each $m^{t}$ is a bounded continuous function on $\mathbb{R}%
^{d}\backslash \{0\}$ with 
\begin{equation*}
\left\Vert m^{t}\right\Vert _{L^{\infty }}\leq \left\Vert \widehat{k}%
\right\Vert _{L^{\infty }}<\infty ,
\end{equation*}%
(see Proposition \ref{prop.b-inhom}) and that, by Lemma \ref{lem.mean}, $%
m^{t}(\xi )\rightarrow m^{\infty }(\xi )$, for any $\xi \in \mathbb{R}%
^{d}\backslash \{0\}$. In particular, $m^{\infty }$ is a measurable bounded
function on $\mathbb{R}^{d}\backslash \{0\}$.

Denoting the function $\widehat{k}(t\cdot )$ by $\widehat{k}_{t}$, we have 
\begin{equation}
m^{t}(\xi )=\strokedint_{B(0,r(t))}\widehat{k}_{t}(\xi +\eta /t)d\eta .
\label{m-sigma}
\end{equation}

Let $F$ be a function of the form $F=\nabla ^{l}f$, for some scalar Schwartz
function $f$ on $\mathbb{R}^{d}$. Using (\ref{m-sigma}) we obtain 
\begin{equation}
m^{t}(D)F(0)=\strokedint_{B(0,r(t))}\widehat{k}_{t}(D+\eta /t)F(0)d\eta .
\label{m-s-1}
\end{equation}

By a direct computation we have 
\begin{equation*}
\widehat{k}_{t}(D+\eta /t)F(x)=e_{-\eta /t}(x)(\widehat{k}_{t}(D)(e_{\eta
/t}F)(x)),
\end{equation*}%
for any $x\in \mathbb{R}^{d}$, where $e_{\pm \eta /t}(x):=\exp (\pm
i\left\langle \eta /t,x\right\rangle )$. This shows that (\ref{m-s-1}) can
be rewritten as 
\begin{equation}
m^{t}(D)F(0)=\strokedint_{B(0,r(t))}\widehat{k}_{t}(D)(e_{\eta /t}F)(0)d\eta
=I_{t}+II_{t},  \label{m-s-2}
\end{equation}%
where 
\begin{equation*}
I_{t}:=\strokedint_{B(0,r(t))}\widehat{k}_{t}(D)F(0)d\eta =\widehat{k}%
_{t}(D)F(0)\text{,}
\end{equation*}%
and 
\begin{equation*}
II_{t}:=\strokedint_{B(0,r(t))}\widehat{k}_{t}(D)(e_{\eta /t}F-F)(0)d\eta 
\text{.}
\end{equation*}

Fix some multiindex $\alpha $ with $|\alpha |=l$ and let $\psi $ be a
Schwartz function on $\mathbb{R}^{d}$. For any $t>0$, we have%
\begin{eqnarray*}
\nabla ^{\alpha }\sigma _{t}(D)\psi (0) &=&\int_{\mathbb{R}^{d}}(-i\xi
)^{\alpha }\widehat{k}(t\xi )\widehat{\psi }(\xi )d\xi =t^{-l}\int_{\mathbb{R%
}^{d}}(-i\xi )^{\alpha }\widehat{k}(\xi )t^{-d}\widehat{\psi }(\xi /t)d\xi \\
&=&t^{-l}\nabla ^{\alpha }\widehat{k}(D)\psi _{t}(0),
\end{eqnarray*}%
and hence, by the fact that $\widehat{k}(D)$ is bounded on $C_{0}^{l}(%
\mathbb{R}^{d})$, 
\begin{eqnarray*}
|\nabla ^{\alpha }\widehat{k}_{t}(D)\psi (0)| &=&|t^{-l}\nabla ^{\alpha }%
\widehat{k}(D)\psi _{t}(0)|\lesssim t^{-l}\left\Vert \psi _{t}\right\Vert
_{W^{l,\infty }} \\
&\lesssim &\left\Vert \nabla ^{l}\psi \right\Vert _{L^{\infty
}}+\sum_{j=0}^{l-1}t^{j-l}\left\Vert \nabla ^{j}\psi \right\Vert _{L^{\infty
}}.
\end{eqnarray*}

Consequently, one can write 
\begin{equation}
|I_{t}|=|\widehat{k}_{t}(D)F(0)|\lesssim \left\Vert \nabla ^{l}f\right\Vert
_{L^{\infty }}+\sum_{j=0}^{l-1}t^{j-l}\left\Vert \nabla ^{j}f\right\Vert
_{L^{\infty }}\rightarrow \left\Vert \nabla ^{l}f\right\Vert _{L^{\infty }},
\label{I-bound}
\end{equation}%
when $t\rightarrow \infty $.

On the other hand, we have 
\begin{equation*}
II_{t}=\strokedint_{B(0,r(t))}\int_{\mathbb{R}^{d}}\widehat{k}_{t}(\xi )(%
\widehat{F}(\xi -\eta /t)-\widehat{F}(\xi ))d\xi d\eta ,
\end{equation*}%
and hence%
\begin{eqnarray}
|II_{t}| &\leq &\strokedint_{B(0,r(t))}\int_{\mathbb{R}^{d}}|\widehat{k}%
_{t}(\xi )||\widehat{F}(\xi -\eta /t)-\widehat{F}(\xi )|d\xi d\eta  \notag \\
&\leq &\left\Vert \widehat{k}\right\Vert _{L^{\infty }}\int_{\mathbb{R}^{d}}%
\strokedint_{B(0,r(t))}|\widehat{F}(\xi -\eta /t)-\widehat{F}(\xi )|d\eta
d\xi .  \label{II-1}
\end{eqnarray}

Note that, since $F$ is a Schwartz function, $\nabla \widehat{F}$ is also a
Schwartz functions. By the mean value theorem we get 
\begin{equation*}
|\widehat{F}(\xi -\eta /t)-\widehat{F}(\xi )|\leq |\eta /t|\max_{\xi
^{\prime }\in \lbrack \xi -\eta /t,\xi ]}|\nabla \widehat{F}(\xi ^{\prime
})|\lesssim (r(t)/t)\left\langle \xi \right\rangle ^{-d-1},
\end{equation*}%
for any $t$ with $|\xi |/2\geq r(t)/t$. Using this and (\ref{II-1}) we get 
\begin{eqnarray}
|II_{t}| &\lesssim &\int_{|\xi |/2\geq r(t)/t}\strokedint_{B(0,r(t))}|%
\widehat{F}(\xi -\eta /t)-\widehat{F}(\xi )|d\eta d\xi  \notag \\
&&+\int_{|\xi |<2r(t)/t}\strokedint_{B(0,r(t))}|\widehat{F}(\xi -\eta /t)-%
\widehat{F}(\xi )|d\eta d\xi  \notag \\
&\lesssim &\int_{|\xi |/2\geq r(t)/t}(r(t)/t)\left\langle \xi \right\rangle
^{-d-1}d\eta d\xi +\left\Vert \widehat{F}\right\Vert _{L^{\infty
}}\int_{|\xi |<2r(t)/t}d\xi  \notag \\
&\lesssim &(r(t)/t)+(r(t)/t)^{d}\rightarrow 0,  \label{II-bound}
\end{eqnarray}%
when $t\rightarrow \infty $.

Now, (\ref{m-s-2}), (\ref{I-bound}) and (\ref{II-bound}) (and the dominated
convergence theorem) gives us 
\begin{equation*}
|m^{\infty }(D)\nabla ^{l}f(0)|=\lim_{t\rightarrow \infty
}|m^{t}(D)F(0)|\lesssim \left\Vert \nabla ^{l}f\right\Vert _{L^{\infty }},
\end{equation*}
and since $m^{\infty }(D)$ commutes with the translations, 
\begin{equation*}
\left\Vert \nabla ^{l}m^{\infty }(D)f\right\Vert _{L^{\infty }}\lesssim
\left\Vert \nabla ^{l}f\right\Vert _{L^{\infty }},
\end{equation*}
for any Schwartz function $f$ on $\mathbb{R}^{d}$.

Hence, $m^{\infty }(D)$ is a multiplier on $\dot{C}_{0}^{l}(\mathbb{R}^{d})$%
. Notice that $m^{\infty }$ is a homogeneous function of order $0$.
Consequently, by applying Lemma \ref{lem.jfa} we get that $m^{\infty }$ is a
constant function. This proves Lemma \ref{lem.W}. \hfill $\square $

\bigskip

An interesting by-product of Lemma \ref{lem.W} and Remark \ref{rem.2} is
that, for any $k$ with $T_{k}\in M(C_{0}^{l}(\mathbb{R}^{d}))$, the only
contribution to its Wiener number (i.e., (\ref{WB-1})) is inherited from a
discrete measure. More precisely, we have:

\begin{proposition}
\label{prop.discr} Let $k$ be a tempered distribution such that $T_{k}\in
M(C_{0}^{l}(\mathbb{R}^{d}))$. Then, $k$ can be uniquely decomposed as $%
k=k_{d}+k^{\prime }$, where $k_{d}\in \mathcal{M}(\mathbb{R}^{d})$ is a
discrete measure and $k^{\prime }$ is a pseudo-measure such that 
\begin{equation}
\lim_{t\rightarrow \infty }\strokedint_{B(t\xi ,r(t))}|\widehat{k^{\prime }}%
(\eta )|^{2}d\eta =0,  \label{k'0}
\end{equation}%
for any $\xi \in \mathbb{R}^{d}\backslash \{0\}$ and any growth function $r$
with $\lim_{t\rightarrow \infty }(r(t)/t)=0$.
\end{proposition}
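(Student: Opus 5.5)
We need to decompose $k = k_d + k'$ with $k_d$ a discrete measure. The natural candidate comes from the representation (\ref{k-niu}): for each direction $\xi$ with $\xi_j\neq 0$, the asymptotics of $\widehat k$ on $B(t\xi,r(t))$ are governed by the measure $\nu_{(\xi,j)}$ from (\ref{1-niu-j}). The plan is to show that the discrete parts of these measures do not depend on $\xi$ (and $j$). Their common value will be $k_d$.

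\textbf{Step 1: the atomic parts are independent of the direction.} Write $\nu_{(\xi,j)} = \nu^{d}_{(\xi,j)} + \nu^{c}_{(\xi,j)}$ (atomic plus continuous part). For $\tau\in\mathbb{R}^d$, consider the modulated kernel $e_{-\tau}k$. Modulation preserves $M(C_0^l(\mathbb{R}^d))$, since $e_{-\tau}(k\ast f)=(e_{-\tau}k)\ast(e_{-\tau}f)$ and multiplication by $e_{\pm\tau}$ is bounded on $C_0^l$. Its Fourier transform is $\widehat k(\cdot+\tau)$. Applying Lemma \ref{lem.W} to $e_{-\tau}k$, and using that $B(t\xi+\tau,r(t))$ and $B(t\xi,r(t))$ differ negligibly since $\widehat k$ is bounded and $r(t)\to\infty$, the limit of averages of $\widehat k(\eta)e^{i\langle\eta,\tau\rangle}$ over $B(t\xi,r(t))$ exists and does not depend on $\xi$. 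We can also modulate the decomposition (\ref{1-mean-miu}) directly: the average of $\widehat k(\eta)e^{i\langle \eta,\tau\rangle}$ equals the average of $\widehat{\nu}_{(\xi,j)}(\eta)e^{i\langle\eta,\tau\rangle}$ plus $o(1)$. By Wiener's lemma (Lemma \ref{lem.Wien}) applied to the translated measure, this tends to $c\,\nu_{(\xi,j)}(\{\tau\})$ (up to sign convention on $\tau$). Hence $a(\tau):=\nu_{(\xi,j)}(\{\tau\})$ is independent of $(\xi,j)$ for every $\tau$. Define
\[
k_d:=\sum_{\tau} a(\tau)\delta_{\tau}=\nu^d_{(\xi,j)},
\]
a finite discrete measure.

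\textbf{Step 2: the remainder has vanishing quadratic averages.} Set $k'=k-k_d$. Then $T_{k'}\in M(C_0^l)$, since $k_d$ is a finite measure, and $k'$ is a pseudomeasure by Proposition \ref{prop.b-inhom}. Its decomposition (\ref{k-j-dec}) is obtained by replacing $\nu_{0,j}$ by $\nu_{0,j}-\partial_j^l k_d$? That does not work, because $\partial_j^l k_d$ is not a measure. Instead, argue directly as in Remark \ref{rem.2}. On $B(t\xi,r(t))$ we have
\[
\widehat{k'}=\widehat k-\widehat{k_d}=\widehat{\nu}_{(\xi,j)}-\widehat{\nu^d_{(\xi,j)}}+o(1)=\widehat{\nu^c_{(\xi,j)}}+o(1)
\]
uniformly, using the same expansion $\eta^\beta/\eta_j^l=\xi^\beta/\xi_j^l+o(1)$ and the boundedness of $\widehat{\nu}_{\beta,j}$. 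Since everything is bounded, the average of $|\widehat{k'}|^2$ equals the average of $|\widehat{\nu^c_{(\xi,j)}}|^2$ plus $o(1)$. By Wiener's theorem (Theorem \ref{th.W}) this tends to $c\sum_\tau|\nu^c_{(\xi,j)}(\{\tau\})|^2=0$. This gives (\ref{k'0}).

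\textbf{Step 3: uniqueness.} Suppose $k_d+k'=\tilde k_d+\tilde k'$. Then $\mu:=k_d-\tilde k_d=\tilde k'-k'$ is a discrete measure. By Cauchy--Schwarz/Minkowski, averages of $|\widehat\mu|^2$ over $B(t\xi,r(t))$ tend to $0$. Wiener's theorem gives $\sum_\tau|\mu(\{\tau\})|^2=0$, so $\mu=0$.

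The main obstacle is Step 1: justifying that the $\tau$-modulated averages of $\widehat k$ are governed by the atoms of $\nu_{(\xi,j)}$ at $\tau$. This requires a version of Lemma \ref{lem.Wien} with a character inserted. I would derive it by applying the lemma to the translated measure $\delta_{-\tau}\ast\nu_{(\xi,j)}$, after checking that multiplying (\ref{k-niu}) by $e^{i\langle\eta,\tau\rangle}$ preserves all the error estimates, which it does since the factor has modulus one.
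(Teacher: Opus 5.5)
Given Step 1, your Steps 2 and 3 are fine. But the key claim of Step 1, that $\nu_{(\xi,j)}(\{\tau\})$ does not depend on $(\xi,j)$, does not follow from your argument. Modulating the kernel translates the symbol: $\widehat{e_{-\tau}k}=\widehat k(\cdot+\tau)$. So Lemma \ref{lem.W} applied to $e_{-\tau}k$ only says that the averages of $\widehat k$ over $B(t\xi+\tau,r(t))$ have a $\xi$-independent limit. Since these balls differ negligibly from $B(t\xi,r(t))$, this is just the Wiener property of $k$ itself, i.e.\ the $\xi$-independence of the single atom $\nu_{(\xi,j)}(\{0\})$ via (\ref{1-k-niu-j}). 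It says nothing about the averages of $e^{i\langle \eta,\tau\rangle}\widehat k(\eta)$ for $\tau\neq 0$, so as written $k_d$ is not well defined.

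What you need is translation of the kernel, not modulation. Set $k_\tau:=k(\cdot+\tau)$. Then $T_{k_\tau}f=(T_kf)(\cdot+\tau)$, so $T_{k_\tau}\in M(C_0^l(\mathbb{R}^d))$ with the same norm, and $\widehat{k_\tau}(\eta)=e^{i\langle \eta,\tau\rangle}\widehat k(\eta)$. Applying Lemma \ref{lem.W} to $k_\tau$, together with your character version of Lemma \ref{lem.Wien}, shows that $c\,\nu_{(\xi,j)}(\{\tau\})$ is independent of $\xi$. It is also independent of $j$: compare two indices with $\xi_j,\xi_{j'}\neq 0$ at the same $\xi$. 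You also placed the difficulty in the wrong spot. Inserting a unimodular character into Wiener's lemma is routine; this $\xi$-independence is the real content of the proposition.

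With that repair your proof is correct, and it takes a different route from the paper's. The paper applies Lemma \ref{lem.W} once, to the symbol $|\widehat k|^2$, which is a product of the multiplier symbols $\widehat k$ and $\overline{\widehat k}$. It combines this with the quadratic Wiener theorem through (\ref{k-niu-j}). The $\xi$-independence of $\sum_\tau|\nu_{(\xi,j)}(\{\tau\})|^2$ then forces (\ref{niu0}): the measures $\nu_{\beta,j}$ with $\beta\neq\beta_j$ have no atoms. The paper then takes $k_d$ to be the atomic part of $\nu_{\beta_j,j}$, gets (\ref{kniu-0}) from Theorem \ref{th.W}, and obtains independence of $j$ from the uniqueness argument.

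Your route replaces that single quadratic application with one linear application per $\tau$ and reads off the atoms of $\nu_{(\xi,j)}$ directly. This avoids the product-of-multipliers observation. It also avoids a step the paper leaves implicit: that a $\xi$-independent sum of squared moduli of polynomials in $\xi$ must have constant summands. You can recover (\ref{niu0}) and Remark \ref{rem.discr} even more easily. On $\{\xi_j=1\}$, $\nu_{(\xi,j)}(\{\tau\})$ is a constant polynomial in the remaining variables, so its nonconstant coefficients $\nu_{\beta,j}(\{\tau\})$, $\beta\neq\beta_j$, vanish by linear independence of monomials.
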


\noindent \textbf{Proof.} We show the existence of a discrete measure $\mu
\in \mathcal{M}(\mathbb{R}^{d})$ such that 
\begin{equation}
\lim_{t\rightarrow \infty }\strokedint_{B(t\xi ,r(t))}|\widehat{k}(\eta )-%
\widehat{\mu }(\eta )|^{2}d\eta ,  \label{k0}
\end{equation}%
for any $\xi \in \mathbb{R}^{d}\backslash \{0\}$. (We then show that such $%
\mu $ is unique and set $k_{d}:=\mu $ and $k^{\prime }:=k-k_{d}$.)

Fix some $j\in \{1,...,d\}$ and consider the measures $\nu _{\beta ,j}$ that
correspond to a decomposition of the form (\ref{k-j-dec}). If we denote by $%
\nu ^{(j)}$ the measure $\nu _{\beta _{j},j}$, where $\beta
_{j}:=(0,...,0,l,0,...,0)$ (with $l$ on the $j$-th position), then the left
hand side of (\ref{k-niu-j}) can be written as 
\begin{equation}
\sum_{\tau \in \mathbb{R}^{d}}\left\vert \nu ^{(j)}(\{\tau \})+\sum_{|\beta
|=l,\beta \neq \beta _{j}}\frac{\xi ^{\beta }}{\xi _{j}^{l}}\nu _{\beta
,j}(\{\tau \})\right\vert ^{2},  \label{niuj}
\end{equation}%
for any $\xi \in \mathbb{R}^{d}\backslash \{0\}$ with $\xi _{j}\neq 0$.
However, according to (\ref{k-niu-j}) and Lemma \ref{lem.W} applied to the
symbol\footnote{%
Note that, since $\widehat{k}$ is the symbol of a multiplier on $C_{0}^{l}(%
\mathbb{R}^{d})$, $\overline{\widehat{k}}$ is also the symbol of a
multiplier on $C_{0}^{l}(\mathbb{R}^{d})$. We get that $\sigma :=\widehat{k}%
\overline{\widehat{k}}=|\widehat{k}|^{2}$ is the symbol of a multiplier on $%
C_{0}^{l}(\mathbb{R}^{d})$.} $|\widehat{k}|^{2}$, the quantity in (\ref{niuj}%
) does not depend on $\xi $ (neither on $j$). Hence, we have 
\begin{equation}
\nu _{\beta ,j}(\{\tau \})=0,  \label{niu0}
\end{equation}%
for any $\tau \in \mathbb{R}^{d}$, as long as $\beta \neq \beta _{j}$.

By (\ref{k-niu}), for any $\eta \in \mathbb{R}^{d}\backslash \{0\}$ with $%
\eta _{j}\neq 0$, we have 
\begin{equation*}
\widehat{k}(\eta )-\widehat{\nu }_{d}^{(j)}(\eta )=\widehat{\nu }%
_{c}^{(j)}(\eta )+\sum_{|\beta |\leq l,\beta \neq \beta _{j}}\frac{(i\eta
)^{\beta }}{(i\eta _{j})^{l}}\widehat{\nu }_{\beta ,j}(\eta ),
\end{equation*}
where ${\nu }_{c}$ is the continuous part of $\nu$ (see for instance (\ref%
{miuA-1})).

Now, as in (\ref{k-niu-j}) by Wiener's theorem (Theorem \ref{th.W}), we
obtain that, for any $\xi \in \mathbb{R}^{d}\backslash \{0\}$ with $\xi
_{j}\neq 0$, 
\begin{equation*}
\lim_{t\rightarrow \infty }\strokedint_{B(t\xi ,r(t))}|\widehat{k}(\eta )-%
\widehat{\nu }_{d}^{(j)}(\eta )|^{2}d\eta
\end{equation*}%
equals 
\begin{equation*}
\sum_{\tau \in \mathbb{R}^{d}}\left\vert \nu _{c}^{(j)}(\{\tau
\})+\sum_{|\beta |=l,\beta \neq \beta _{j}}\frac{\xi ^{\beta }}{\xi _{j}^{l}}%
\nu _{\beta ,j}(\{\tau \})\right\vert ^{2},
\end{equation*}%
which by (\ref{niu0}) (and $\nu _{c}^{(j)}(\{\tau \})=0$, for any $\tau \in 
\mathbb{R}^{d}$) is zero. In other words,%
\begin{equation}
\lim_{t\rightarrow \infty }\strokedint_{B(t\xi ,r(t))}|\widehat{k}(\eta )-%
\widehat{\nu }_{d}^{(j)}(\eta )|^{2}d\eta =0\text{.}  \label{kniu-0}
\end{equation}

Let us remark that if (\ref{k0}) holds for some $\xi \in \mathbb{R}%
^{d}\backslash \{0\}$ and some discrete Radon measures $\mu _{1}$, $\mu _{2}$
(in place of $\mu $) then, we have $\mu _{1}=\mu _{2}$. Indeed, by the
triangle inequality 
\begin{equation*}
|\widehat{\mu }_{1}(\eta )-\widehat{\mu }_{2}(\eta )|^{2}\lesssim |\widehat{k%
}(\eta )-\widehat{\mu }_{1}(\eta )|^{2}+|\widehat{k}(\eta )-\widehat{\mu }%
_{2}(\eta )|^{2},
\end{equation*}%
and by (\ref{k0}) for $\mu =\mu _{1}$ and $\mu =\mu _{2}$ we get 
\begin{equation*}
\lim_{t\rightarrow \infty }\strokedint_{B(t\xi ,r(t))}|\widehat{(\mu
_{1}-\mu _{2})}(\eta )|^{2}d\eta =0,
\end{equation*}%
which by Wiener's theorem (Theorem \ref{th.W}) and the discreetness of $\mu
_{1}-\mu _{2}$, shows that $\mu _{1}-\mu _{2}=0$, i.e., we have the
uniqueness of $\mu $ in (\ref{k0})). This fact combined with (\ref{kniu-0})
shows that $\nu _{d}^{(j)}$ are all equal and one can choose $\mu :=\nu
_{d}^{(1)}$. \hfill $\square $

\begin{remark}
\bigskip\ \label{rem.discr}Let $k$ be such that $T_{k}\in M(C_{0}^{l}(%
\mathbb{R}^{d}))$. As in the proof of Proposition \ref{prop.discr} one can
see that for any decomposition (\ref{dec-miu}) the discrete part of the
\textquotedblleft main term\textquotedblright\ $\nu _{\alpha ,\alpha }$ is $%
k_{d}$, for any $\alpha $ with $|\alpha |=l$.
\end{remark}

\begin{remark}
\label{rem.Wdiscr}In general, the Wiener property does not guaranties the
existence of a measure responsible for the whole contribution in (\ref{WB-1}%
). This can be seen by the following easy example. Consider the tempered
distribution $k:=2P_{+}\delta _{0}-\delta _{0}$, where $P_{+}:=1_{[0,\infty
)^{d}}(D)$ is the Riesz projection. Clearly, $|\hat{k}|^{2}$ is constant and
therefore $k\ast \overline{k(-\cdot )}$ has the Wiener property with respect
to $r(t)=\sqrt{t}$. Suppose by contradiction that $k=\mu +k^{\prime }$,
where $\mu $ is a Radon measure and $k^{\prime }$ is as in the statement of
Proposition \ref{prop.discr}. Then, $2P_{+}\delta _{0}=(\mu +\delta
_{0})+k^{\prime }$ and since $\mu +\delta _{0} $ is a Radon measure, $%
2P_{+}\delta _{0}\ast \overline{2P_{+}\delta _{0}(-\cdot )}$ must have the
Wiener property with respect to $r(t)=\sqrt{t}$. However, it is easily seen
that this is not the case by considering the directions of $\xi =(1,..,1)$
and $\xi =(-1,...,-1)$.
\end{remark}

\begin{remark}
Let $k^{1}$ and $k^{2}$ be tempered distributions such that $%
T_{k^{1}},T_{k^{2}}\in M(C_{0}^{l}(\mathbb{R}^{d}))$. A direct consequence
of Proposition \ref{prop.discr} and Wiener's lemma (Lemma \ref{lem.Wien}) is
that 
\begin{equation*}
\lim_{t\rightarrow \infty }\strokedint_{B(t\xi ,\sqrt{t})}\widehat{k^{1}}%
(\eta )\widehat{k^{2}}(\eta )d\eta =c\sum_{\tau \in \mathbb{R}%
^{d}}k_{d}^{1}(\{\tau \})k_{d}^{2}(\{\tau \}),
\end{equation*}%
for any $\xi \in \mathbb{R}^{d}\backslash \{0\}$. Also, as in the case of
measures, we have that $(k^{1}\ast k^{2})_{d}=k_{d}^{1}\ast k_{d}^{2}$.
\end{remark}

We now transfer Lemma \ref{lem.W} from $\mathbb{R}^{d}$ to $\mathbb{T}^{d}$
by the means of Lemma \ref{lem.tor-R} and Lemma \ref{lem.v-psi} below.

\begin{lemma}
\label{lem.tor-R}Let $\sigma \in l^{\infty }(\mathbb{Z}^{d})$ and consider
the distribution $K$ on $\mathbb{T}^{d}$ given by 
\begin{equation*}
K(x)=\sum_{\chi \in \mathbb{Z}^{d}}\sigma (\chi )e^{i\left\langle \chi
,x\right\rangle }.
\end{equation*}

Let $\Phi $ be a Schwartz function on $\mathbb{R}^{d}$ such that $\widehat{%
\Phi }\in C_{c}^{\infty }((B(0,1/4))$, $\widehat{\Phi }\equiv 1$ on $%
B(0,1/8) $ and $\widehat{\Phi }\geq 0$ is radial. Consider the tempered
distribution $K_{e}$ on $\mathbb{R}^{d}$ given by $K_{e}:=\widetilde{K}\Phi $%
, where $\widetilde{K}$ is the extension of $K$ to $\mathbb{R}^{d}$ by
periodicity. Then, there exists a constant $c>0$ (depending on $G$) such
that 
\begin{equation}
\strokedint_{B(t\omega ,r(t))}\widehat{K}_{e}(\xi )d\xi =c\frac{1}{%
|B(t\omega ,r(t))|}\sum_{\chi \in B_{d}(t\omega ,r(t))}\sigma (\chi
)+o_{Q\rightarrow \infty }(1),  \label{tR-0}
\end{equation}%
for any $\omega \in \mathbb{S}^{d-1}$ and any growth function $r$.
\end{lemma}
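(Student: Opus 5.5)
We compute $\widehat{K}_e$ explicitly and compare its ball averages with the discrete averages of $\sigma$. The periodic extension $\widetilde K$ has Fourier transform $(2\pi)^d\sum_{\chi\in\mathbb{Z}^d}\sigma(\chi)\delta_\chi$ (up to the normalising constant fixed by our convention). Multiplying by $\Phi$ corresponds to convolving with $(2\pi)^{-d}\widehat\Phi$, so
\begin{equation*}
\widehat{K}_e(\xi)=c_0\sum_{\chi\in\mathbb{Z}^d}\sigma(\chi)\widehat{\Phi}(\xi-\chi),
\end{equation*}
with a sum that is locally finite, since $\widehat\Phi$ is supported in $B(0,1/4)$ and the bumps have pairwise disjoint supports. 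This is justified in the sense of tempered distributions because $\sigma\in \ell^\infty$ makes $\widetilde K$ tempered.

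Integrating over the ball $B:=B(t\omega,r(t))$ gives
\begin{equation*}
\int_B\widehat K_e(\xi)d\xi=c_0\sum_{\chi}\sigma(\chi)\int_B\widehat\Phi(\xi-\chi)d\xi .
\end{equation*}
We split the lattice points into two groups.
\begin{itemize}
\item[(a)] Points $\chi$ with $B(\chi,1/4)\subset B$. For these the integral equals $\int\widehat\Phi=:c_1>0$, which is positive because $\widehat\Phi\geq 0$ and $\widehat\Phi\equiv1$ near $0$.
\item[(b)] Points $\chi$ with $B(\chi,1/4)\cap \partial B\neq\emptyset$. For these the integral lies between $0$ and $c_1$.
\end{itemize}
Lattice points whose bump misses $B$ contribute nothing.

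The set (a) differs from $B_d(t\omega,r(t))$ only by lattice points in the shell $\{\xi: r(t)-1/4\le |\xi-t\omega|\le r(t)+1/4\}$. The number of such points is $O(r(t)^{d-1}+1)$, by comparing with the volume of the $1$-neighbourhood of the shell. The points in (b) lie in the same shell. Hence
\begin{equation*}
\int_B\widehat K_e=c_0c_1\sum_{\chi\in B_d(t\omega,r(t))}\sigma(\chi)+O\bigl(\|\sigma\|_{\ell^\infty}(r(t)^{d-1}+1)\bigr).
\end{equation*}
Dividing by $|B|\sim r(t)^d$ makes the error $O(\|\sigma\|_\infty/r(t))$. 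This tends to $0$ as $t\to\infty$, uniformly in $\omega$, since $r$ is a growth function; this is the $o(1)$ term in (\ref{tR-0}). The statement then holds with $c=c_0c_1$, which depends only on $\Phi$ and the Fourier normalisation.

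The only step needing care is the lattice-point count in the shell. I would handle it by assigning to each such $\chi$ the unit cube $\chi+[0,1)^d$. These cubes are disjoint and contained in the shell thickened by $\sqrt d$, so their number is bounded by that region's volume, $\lesssim r(t)^{d-1}+1$. The rest is bookkeeping of constants.
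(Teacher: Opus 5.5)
Your argument is correct. It uses the same two ingredients as the paper: the explicit formula for $\widehat{K}_e$ as a sum of disjoint translated bumps, and a count of lattice points near the boundary.

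The bookkeeping differs as follows.

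\textbf{The paper's route.} It first replaces the ball $B(t\omega,r(t))$ by the union $Q^t$ of the unit lattice cubes attached to the points of $B_d(t\omega,r(t))$. On each such cube only the $2^d$ bumps centred at its vertices are present. Radial symmetry of $\widehat{\Phi}$ guarantees that every corner piece $B(\chi^j,1/4)\cap Q(\chi,1)$ carries the same mass $c_1$, so the integral over a cube is exactly $c_1\sum_j\sigma(\chi^j)$. A double-counting step then gives $c=2^dc_1$: every interior lattice point is a vertex of $2^d$ cubes. By symmetry, $2^dc_1=\int\widehat{\Phi}$, which is the same constant you obtain.

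\textbf{Your route.} You integrate whole bumps over the ball. Everything that straddles $\partial B$ goes into the shell $\{r(t)-1/4\le|\xi-t\omega|\le r(t)+1/4\}$, which you control by the unit-cube lattice-point count.

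\textbf{What your version buys.}
\begin{itemize}
\item It avoids the ball-to-$Q^t$ replacement. In the paper that step tacitly needs the same kind of boundary estimate that you make explicit.
\item It makes no use of the radial symmetry of $\widehat{\Phi}$. Nonnegativity enters only to ensure $c>0$.
\item It gives an explicit error $O(\|\sigma\|_{\ell^\infty}/r(t))$, uniform in $\omega$.
\end{itemize}
The cube tiling is natural only if one prefers exact identities cube by cube. With the paper's Fourier convention, your normalising constant is $c_0=1$.
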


\noindent \textbf{Proof.} For any $\xi \in \mathbb{R}^{d}$ we have 
\begin{equation}
\widehat{K}_{e}(\xi )=\int_{\mathbb{R}^{d}}\widetilde{K}(x)\Phi
(x)e^{-i\left\langle x,\xi \right\rangle }dx=\sum_{\chi ^{\prime }\in 
\mathbb{Z}^{d}}\sigma (\chi ^{\prime })\widehat{\Phi }(\chi ^{\prime }-\xi ).
\label{tR-1}
\end{equation}

For simplicity we introduce the notation $B^{t}:=B(t\omega ,r(t))$ and $%
B_{d}^{t}:=B_{d}(t\omega ,r(t))$. By (\ref{tR-1}) and the properties of $%
\widehat{\Phi }$, the function $\widehat{K}_{e}$ is bounded. Hence, since $%
B^{t}$ and $\cup _{\chi \in B_{d}^{t}}Q(\chi ,1)$ have asymptotically
equivalent volumes, one can replace the the average on $B^{t}$\ in the left
hand side of (\ref{tR-0}) by an average on that $Q^{t}:=\cup _{\chi \in
B_{d}^{t}}Q(\chi ,1)$.

Fix some integral point $\chi \in Q_{d}$ and let $(\chi
^{j})_{j=1,...,2^{d}} $ be the (integral) extremal points of the cube $%
Q(\chi ,1)$. Since, $\widehat{\Phi }(\chi ^{\prime }-\xi )=0$ for $|n-\xi
|\geq 1/4$, the function $\widehat{\Phi }(\chi ^{\prime }-\cdot )$ is
identically zero on $Q(\chi ,1)$, unless $\chi ^{\prime }$ is one of the
points $\chi ^{j}$. Also, if $j\in \{1,...,2^{d}\}$, the support of $%
\widehat{\Phi }(\chi ^{j}-\cdot )$ in $Q(\chi ,1)$ is the set%
\begin{equation*}
U_{j}(\chi ):=B(\chi ^{j},1/4)\cap Q(\chi ,1),
\end{equation*}%
and we have 
\begin{equation*}
\int_{U_{j}(\chi )}\widehat{\Phi }(\chi ^{j}-\xi )d\xi =\int_{B(0,1/4)\cap
Q(0,1)}\widehat{\Phi }(\xi )d\xi =:c_{1}>0,
\end{equation*}%
for any $\chi $ and $j$. Also, note that for any $\chi $ the sets $%
U_{j}(\chi )$ are pairwise disjoint.

Using these observations and (\ref{tR-1}) one can write 
\begin{equation}
\strokedint_{Q^{t}}\widehat{K}_{e}(\xi )d\xi =\frac{1}{|Q^{t}|}\sum_{\chi
\in B_{d}^{t}}\int_{Q(\chi ,1)}\left( \sum_{\chi ^{\prime }\in \mathbb{Z}%
^{d}}\sigma (\chi ^{\prime })\widehat{\Phi }(\chi ^{\prime }-\xi )\right)
d\xi =\frac{c_{1}}{|Q^{t}|}\sum_{\chi \in B_{d}^{t}}\sum_{j=1}^{2^{d}}\sigma
(\chi ^{j}).  \label{tR-2}
\end{equation}

Now, we can observe that in the right hand side of (\ref{tR-2}) each point $%
\chi ^{j}$ is counted $2^{d}$ times, except for the integral points on the
boundary of $Q^{t}$. However, their number is of smaller order of growth
than $|Q^{t}|$. Taking into account the fact that $\sigma $ is bounded, we
can write 
\begin{equation*}
\frac{c_{1}}{|Q^{t}|}\sum_{\chi \in B_{d}^{t}}\sum_{j=1}^{2^{d}}\sigma (\chi
^{j})=\frac{2^{d}c_{1}}{|B^{t}|}\sum_{\chi \in B_{d}^{t}}\sigma (\chi
)+o_{t\rightarrow \infty }(1),
\end{equation*}%
which together with (\ref{tR-2}) gives us (\ref{tR-0}) with $c:=2^{d}c_{1}$.
\hfill $\square $

\bigskip

Thanks to the special form of the elements of$M(C^{l}(\mathbb{T}^{d}))$ and $%
M(C_{0}^{l}(\mathbb{R}^{d}))$ we have the following transference result:

\begin{lemma}
\label{lem.v-psi} With the same notation as in Lemma \ref{lem.tor-R}, if $%
T_{K}\in M(C^{l}(\mathbb{T}^{d}))$ then $T_{K_{e}}\in M(C_{0}^{l}(\mathbb{R}%
^{d}))$.
\end{lemma}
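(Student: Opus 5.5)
Since both sides of Lemma \ref{lem.v-psi} are characterized by Proposition \ref{prop.g}, the plan is to work on the kernel side. If $T_{K}\in M(C^{l}(\mathbb{T}^{d}))$, the toroidal version of Proposition \ref{prop.g} gives, for each multiindex $\alpha$ with $|\alpha|\leq l$, a decomposition
\begin{equation*}
\nabla^{\alpha}K=\sum_{|\beta|\leq l}\nabla^{\beta}\nu_{\beta,\alpha},\qquad \nu_{\beta,\alpha}\in\mathcal{M}(\mathbb{T}^{d}).
\end{equation*}
By the implication (ii)$\Rightarrow$(i) of Proposition \ref{prop.g} on $\mathbb{R}^{d}$, it then suffices to show that $\nabla^{\alpha}K_{e}\in\mathcal{M}^{-l}(\mathbb{R}^{d})$ for every $|\alpha|\leq l$.

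First I will periodize. Let $\widetilde{\nu}_{\beta,\alpha}$ denote the periodic extension of $\nu_{\beta,\alpha}$ to $\mathbb{R}^{d}$; this is a locally finite measure with uniformly bounded mass on unit cubes. Then $\nabla^{\alpha}\widetilde{K}=\sum_{\beta}\nabla^{\beta}\widetilde{\nu}_{\beta,\alpha}$ as distributions on $\mathbb{R}^{d}$, which can be checked by testing against $C_{c}^{\infty}$ functions and using a partition of unity subordinate to the lattice translates.

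Next I will apply the Leibniz rule to $K_{e}=\widetilde{K}\Phi$:
\begin{equation*}
\nabla^{\alpha}K_{e}=\sum_{\gamma\leq\alpha}\binom{\alpha}{\gamma}\nabla^{\alpha-\gamma}\Phi\,\nabla^{\gamma}\widetilde{K}.
\end{equation*}
Each $\nabla^{\gamma}\widetilde{K}$ with $|\gamma|\leq l$ is itself a sum $\sum_{\beta}\nabla^{\beta}\widetilde{\nu}_{\beta,\gamma}$. So the whole problem reduces to one claim: for $\psi$ a Schwartz function (here a derivative of $\Phi$) and $\mu$ the periodization of a finite measure on $\mathbb{T}^{d}$, the distribution $\psi\nabla^{\beta}\mu$ lies in $\mathcal{M}^{-l}(\mathbb{R}^{d})$ when $|\beta|\leq l$.

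To prove the claim I will use the Leibniz rule backwards, inducting on $|\beta|$:
\begin{equation*}
\psi\nabla^{\beta}\mu=\sum_{\delta\leq\beta}c_{\beta,\delta}\nabla^{\delta}\big((\nabla^{\beta-\delta}\psi)\mu\big),
\end{equation*}
with integer coefficients $c_{\beta,\delta}$. Each $(\nabla^{\beta-\delta}\psi)\mu$ is a finite measure on $\mathbb{R}^{d}$. Indeed, its total variation is bounded by
\begin{equation*}
\sum_{n\in\mathbb{Z}^{d}}\sup_{Q(n,1)}|\nabla^{\beta-\delta}\psi|\;\|\mu\|_{\mathcal{M}(\mathbb{T}^{d})},
\end{equation*}
and this sum converges because $\psi$ is Schwartz. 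Since $|\delta|\leq l$, we conclude $\psi\nabla^{\beta}\mu\in\mathcal{M}^{-l}(\mathbb{R}^{d})$. Combining this with the previous two steps gives $\nabla^{\alpha}K_{e}\in\mathcal{M}^{-l}(\mathbb{R}^{d})$ for all $|\alpha|\leq l$, hence $T_{K_{e}}\in M(C_{0}^{l}(\mathbb{R}^{d}))$.

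The step needing the most care is justifying $\nabla^{\alpha}\widetilde{K}=\sum_{\beta}\nabla^{\beta}\widetilde{\nu}_{\beta,\alpha}$ together with the weighted finiteness bound, that is, making the passage from toroidal measures to periodic measures on $\mathbb{R}^{d}$ rigorous. I expect this to be routine once one tests against $\phi\in C_{c}^{\infty}(\mathbb{R}^{d})$ and periodizes $\phi$ to $\sum_{n}\phi(\cdot+2\pi n)$, which converts the pairing on $\mathbb{R}^{d}$ into the corresponding pairing on $\mathbb{T}^{d}$.
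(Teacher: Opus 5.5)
Your proposal is correct and follows the paper's proof. The paper also uses the torus version of Proposition \ref{prop.g} and the Leibniz rule applied to $\widetilde{K}\Phi$, and it rests on the claim that $\widetilde{v}\psi\in\mathcal{M}^{-l}(\mathbb{R}^{d})$ whenever $v\in\mathcal{M}^{-l}(\mathbb{T}^{d})$ and $\psi$ is Schwartz (proved there by induction on $l$ via Leibniz); your reverse Leibniz identity, together with the summability of $\sup_{Q(n,1)}|\nabla^{\beta-\delta}\psi|$ over $n\in\mathbb{Z}^{d}$, is simply an explicit version of that claim.
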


\noindent \textbf{Proof.} First, it is useful to observe the following
elementary fact:

\begin{claim}
Let $l$ be a nonnegative integer. If $v\in \mathcal{M}^{-l}(\mathbb{T}^{d})$
and $\psi $ is a Schwartz function on $\mathbb{R}^{d}$ then $\widetilde{\nu }%
\psi \in \mathcal{M}^{-l}(\mathbb{R}^{d})$, where is the extension of $v$ by
periodicity to a distribution $\widetilde{\nu }$ on $\mathbb{R}^{d}$
\end{claim}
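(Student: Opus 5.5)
The plan is to reduce the claim to the case of a single measure by linearity, and then use the Leibniz rule to move all derivatives off $\psi$. By definition of $\mathcal{M}^{-l}(\mathbb{T}^{d})$ we can write
\begin{equation*}
v=\sum_{|\alpha |\leq l}\nabla ^{\alpha }\mu _{\alpha },\qquad \mu _{\alpha }\in \mathcal{M}(\mathbb{T}^{d}).
\end{equation*}
Periodic extension commutes with differentiation, so $\widetilde{v}=\sum_{|\alpha |\leq l}\nabla ^{\alpha }\widetilde{\mu }_{\alpha }$, where each $\widetilde{\mu }_{\alpha }$ is a locally finite, translation-periodic Radon measure on $\mathbb{R}^{d}$. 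It therefore suffices to show that $(\nabla ^{\alpha }\widetilde{\mu })\psi \in \mathcal{M}^{-l}(\mathbb{R}^{d})$ for $|\alpha |\leq l$ and $\mu \in \mathcal{M}(\mathbb{T}^{d})$.

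For this we apply the Leibniz rule in reverse, by induction on $|\alpha |$:
\begin{equation*}
(\nabla ^{\alpha }\widetilde{\mu })\psi =\sum_{\beta \leq \alpha }c_{\alpha ,\beta }\nabla ^{\beta }\bigl(\widetilde{\mu }\,\nabla ^{\alpha -\beta }\psi \bigr),
\end{equation*}
with integer coefficients $c_{\alpha ,\beta }$ (signs included). This identity is checked by pairing both sides with a test function. Every derivative of $\psi $ is again a Schwartz function, so the whole problem comes down to the case $l=0$: if $\widetilde{\mu }$ is a periodic measure and $\phi $ is a Schwartz function, then $\widetilde{\mu }\phi $ is a finite measure on $\mathbb{R}^{d}$.

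This last step is the only place where a genuine estimate enters. Tile $\mathbb{R}^{d}$ by the translates $Q_{n}=2\pi n+[0,2\pi )^{d}$, $n\in \mathbb{Z}^{d}$. By periodicity, $|\widetilde{\mu }|(Q_{n})=\Vert \mu \Vert _{\mathcal{M}(\mathbb{T}^{d})}$ for every $n$. Since $\phi $ is Schwartz, $\sup_{Q_{n}}|\phi |\lesssim \langle n\rangle ^{-d-1}$. Combining the two,
\begin{equation*}
\Vert \widetilde{\mu }\phi \Vert _{\mathcal{M}(\mathbb{R}^{d})}\leq \sum_{n}\Vert \mu \Vert _{\mathcal{M}}\sup_{Q_{n}}|\phi |\lesssim \Vert \mu \Vert _{\mathcal{M}}\sum_{n\in \mathbb{Z}^{d}}\langle n\rangle ^{-d-1}<\infty .
\end{equation*}

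Summing over $\alpha $ and $\beta $ then gives $\widetilde{v}\psi \in \mathcal{M}^{-l}(\mathbb{R}^{d})$, with norm bounded by a constant depending on $\psi $ times $\Vert v\Vert _{\mathcal{M}^{-l}(\mathbb{T}^{d})}$. There is no real obstacle here. The only point that needs care is the bookkeeping of the reverse Leibniz identity, namely that every derivative ends up in front of a product of the form (measure)$\times$(Schwartz function).
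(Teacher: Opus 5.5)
Your proof is correct and takes essentially the same route as the paper. The paper only says that the case $l=0$ is checked directly and that the general case follows by induction on $l$ via the Leibniz formula; your reverse Leibniz identity $(\nabla^{\alpha}\widetilde{\mu})\psi=\sum_{\beta\leq\alpha}c_{\alpha,\beta}\nabla^{\beta}(\widetilde{\mu}\,\nabla^{\alpha-\beta}\psi)$ is that induction written out, and your tiling estimate fills in the $l=0$ case the paper leaves to the reader.
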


For $l=0$ the statement can be checked directly. The general case follows
easily by induction on $l$ and a direct application of the Leibniz
differentiation formula.

Coming back to the proof of Lemma \ref{lem.v-psi}, we observe that, by
Proposition \ref{prop.g} we have $\nabla ^{j}K\in \mathcal{M}^{-l}(\mathbb{T}%
^{d})$, for any $j\in \{0,..,l\}$. (Also, by Proposition \ref{prop.b-inhom}, 
$\widehat{K}$ is bounded on $\mathbb{Z}^{d}$.) By the Leibniz formula and
part (i) we have that $\nabla ^{j}K_{e}\in \mathcal{M}^{-l}(\mathbb{R}^{d})$%
, for any $j\in \{0,..,l\}$. Hence, $T_{K_{e}}\in M(C_{0}^{l}(\mathbb{R}%
^{d}))$. (Note that Lemma \ref{lem.v-psi} can also be deduced directly from
Proposition \ref{Bon-Moh-general} for $X(\mathbb{R}^{d})=C_{0}^{l}(\mathbb{R}%
^{d})$.)\hfill $\square $

\bigskip

By combining Lemma \ref{lem.tor-R} and Lemma \ref{lem.v-psi} we can now
establish the Wiener property of the elements of $M(W^{l,\infty }(\mathbb{T}%
^{d}))$.

\begin{lemma}
\label{lem.W'} If $T_{K}\in M(W^{l,\infty }(\mathbb{T}^{d}))$ then $K$ has
the Wiener property on $\mathbb{T}^{d}$ with respect to any growth function $%
r$ for which $\lim_{t\rightarrow \infty }(r(t)/t)=0$.
\end{lemma}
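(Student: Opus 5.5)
The plan is to transfer the problem to $\mathbb{R}^{d}$ and apply Lemma \ref{lem.W} there, using the two transference lemmas just established.

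\emph{Step 1.} By Remark \ref{rem.g.inf}, $M(W^{l,\infty }(\mathbb{T}^{d}))=M(C^{l}(\mathbb{T}^{d}))$, so $T_{K}\in M(C^{l}(\mathbb{T}^{d}))$. By Proposition \ref{prop.b-inhom}, the coefficients $\sigma :=\widehat{K}$ lie in $\ell ^{\infty }(\mathbb{Z}^{d})$. This places us in the setting of Lemma \ref{lem.tor-R}: we fix $\Phi $ as there and form $K_{e}=\widetilde{K}\Phi $.

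\emph{Step 2.} Lemma \ref{lem.v-psi} gives $T_{K_{e}}\in M(C_{0}^{l}(\mathbb{R}^{d}))$. Lemma \ref{lem.W} then shows that, for any growth function $r$ with $r(t)/t\rightarrow 0$, the limit
\begin{equation*}
L:=\lim_{t\rightarrow \infty }\strokedint_{B(t\omega ,r(t))}\widehat{K}_{e}(\xi )d\xi
\end{equation*}
exists for every $\omega \in \mathbb{S}^{d-1}$ and is independent of $\omega $. Note that $\widehat{K}_{e}\in L_{loc}^{1}$, indeed bounded, by formula (\ref{tR-1}).

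\emph{Step 3.} Lemma \ref{lem.tor-R}, applied to the same $r$, gives
\begin{equation*}
\frac{1}{|B(t\omega ,r(t))|}\sum_{\chi \in B_{d}(t\omega ,r(t))}\widehat{K}(\chi )=\frac{1}{c}\strokedint_{B(t\omega ,r(t))}\widehat{K}_{e}(\xi )d\xi +o(1)
\end{equation*}
as $t\rightarrow \infty $, with $c>0$ independent of $\omega $. Letting $t\rightarrow \infty $, the discrete averages converge to $L/c$ for every $\omega $. This is exactly the Wiener property on $\mathbb{T}^{d}$, with Wiener number $L/c$.

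\emph{Main obstacle.} The only delicate point is that the error term in Lemma \ref{lem.tor-R} really is $o(1)$ as $t\rightarrow \infty $, i.e. that boundary lattice points of $\bigcup_{\chi \in B_{d}^{t}}Q(\chi ,1)$ are negligible compared with $|B(t\omega ,r(t))|\sim r(t)^{d}$. This needs $r(t)\rightarrow \infty $, which holds because $r$ is a growth function. The number of such boundary points is $O(r(t)^{d-1})$, and $\sigma $ is bounded, so the error is $O(1/r(t))$. I would double-check this uniformly in $\omega $, although uniformity is not actually needed, since each direction is treated separately.
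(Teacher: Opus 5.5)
Your proposal is correct and follows essentially the same route as the paper: Remark \ref{rem.g.inf} to get $T_{K}\in M(C^{l}(\mathbb{T}^{d}))$, Lemma \ref{lem.v-psi} to get $T_{K_{e}}\in M(C_{0}^{l}(\mathbb{R}^{d}))$, Lemma \ref{lem.W} on $\mathbb{R}^{d}$, and then Lemma \ref{lem.tor-R} to transfer the averages back to the lattice. Your additional remarks only make explicit what the paper leaves implicit: boundedness of $\widehat{K}$ via Proposition \ref{prop.b-inhom}, which Lemma \ref{lem.tor-R} needs, and the $O(1/r(t))$ boundary error that makes its error term $o(1)$.
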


\noindent \textbf{Proof.} We use the notation from Lemma \ref{lem.tor-R}.
Thanks to Remark \ref{rem.g.inf} we have $T_{K}\in M(C^{l}(\mathbb{T}^{d}))$
and by Lemma \ref{lem.v-psi} (ii) we have $T_{K_{e}}\in M(C_{0}^{l}(\mathbb{R%
}^{d}))$. Now, by using Lemma \ref{lem.W} we get that, for any $\omega \in 
\mathbb{S}^{d-1}$, the limit 
\begin{equation*}
\lim_{t\rightarrow \infty }\strokedint_{B(t\omega ,r(t))}\widehat{K}_{e}(\xi
)d\xi ,
\end{equation*}%
(where $\widehat{K}_{e}$ is the Fourier transform of $K_{e}$ with respect to 
$\mathbb{R}^{d}$) exists and is independent of $\xi $. This fact combined
with Lemma \ref{lem.tor-R} show that, for any $\omega \in \mathbb{S}^{d-1}$,
the limit 
\begin{equation*}
\lim_{t\rightarrow \infty }\frac{1}{|B(t\omega ,r(t))|}\sum_{\chi \in
B_{d}(t\omega ,r(t))}\widehat{K}(\chi ),
\end{equation*}%
(where $\widehat{K}$ is the Fourier transform of $K$ with respect to $%
\mathbb{T}^{d}$) exists and is independent of $\xi $. \hfill $\square $

\begin{remark}
\label{rem.discr'}By using Lemma \ref{lem.W'} instead of Lemma \ref{lem.W}
we can obtain the analogue of Proposition \ref{prop.discr} in the case of $%
\mathbb{T}^{d}$.
\end{remark}

\subsection{The Wiener property of multipliers on $W^{l,1}(\mathbb{T}^{d})$}

In this section we show that the Fourier multipliers on $W^{l,1}(\mathbb{T}%
^{d})$ have a Wiener property with respect to some reasonable growth
functions. One natural question is to ask whether new arguments are needed
here. As we have seen in Remarks \ref{rem.g}, \ref{rem.g.inf}, any Fourier
multiplier on $W^{l,\infty }(\mathbb{T}^{d})$ is a Fourier multiplier on $%
W^{l,1}(\mathbb{T}^{d})$, i.e., we have the embedding 
\begin{equation}
M(W^{l,\infty }(\mathbb{T}^{d}))\subseteq M(W^{l,1}(\mathbb{T}^{d})).
\label{inf-1}
\end{equation}

If this embedding would be an equality, that is if the Fourier multipliers
on $W^{l,\infty }(\mathbb{T}^{d})$ and on $W^{l,1}(\mathbb{T}^{d})$ would be
the same, then we could conclude the Wiener property in the case $W^{l,1}(%
\mathbb{T}^{d})$ from the already treated case of $W^{l,\infty }(\mathbb{T}%
^{d})$. This is however not possible due to the following remarkable result
of Bonami and Mohanty in \cite{Bon-Moh}\footnote{%
The proof given in \cite[p. 329-330]{Bon-Moh} is explicitly written in the
case $l=1$. However, it is easily seen that the argument can be adapted to
any $l\geq 1$.}:

\begin{proposition}
\label{prop.Bon-Moh}There exists a symbol $m$ such that $m(D)$ is a bounded
operator from $W^{l,1}(\mathbb{T}^{d})$ to $W^{l,2}(\mathbb{T}^{d})$ (in
particular $m(D)\in M(W^{l,1}(\mathbb{T}^{d}))$) such that the convolution
kernel $k:=m^{\vee }$ does not satisfy (\ref{dec-miu}) on $\mathbb{T}^{d}$.
In particular, we have $m(D)\in M(W^{l,1}(\mathbb{T}^{d}))$ and $m(D)\notin
M(W^{l,\infty }(\mathbb{T}^{d}))$.
\end{proposition}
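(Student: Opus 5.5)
The plan is to build $m$ by hand in dimension $d=2$ first, and then lift it to general $d$. There are two requirements, and they pull in opposite directions.

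First, $m(D)$ must gain regularity on $W^{l,1}$: it should map $W^{l,1}(\mathbb{T}^{d})$ into $W^{l,2}(\mathbb{T}^{d})$. Second, $m$ must be too rough to be a multiplier on $C^{l}(\mathbb{T}^{d})$. By Proposition \ref{prop.g} on the torus, the second requirement is equivalent to the failure of (\ref{dec-miu}) for $k=m^{\vee}$. By Remark \ref{rem.g.inf} it is also equivalent to $m(D)\notin M(W^{l,\infty}(\mathbb{T}^{d}))$.

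A warning shapes the choice of $m$. If one only uses the trivial coefficient bound $|\chi_j|^{l}|\widehat f(\chi)|\le\|\partial_j^l f\|_{L^1}$, the natural candidates are $m\in\ell^2(\mathbb{Z}^d)$ (possibly times a bounded $0$-homogeneous factor). Such symbols are useless, since their kernels lie in $L^2\subset L^1$ and are therefore measures. So the boundedness step must use a genuinely better inequality than pointwise control of Fourier coefficients.

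\textbf{Boundedness.} For $d=2$ I would use the Sobolev embedding $W^{l,1}(\mathbb{T}^2)\subset W^{l-1,2}(\mathbb{T}^2)$, which is the Gagliardo--Nirenberg inequality applied to $\nabla^{l-1}f$. Consequently any symbol with $|m(\chi)|\lesssim\langle\chi\rangle^{-1}$ satisfies
\begin{equation*}
\|m(D)f\|_{W^{l,2}}\lesssim\|f\|_{W^{l-1,2}}\lesssim\|f\|_{W^{l,1}} .
\end{equation*}
For general $d$ I would reduce to $d=2$ as follows. Take $m(\chi)=m_0(\chi_1,\chi_2)$ multiplied by the indicator of $\{\chi_3=\dots=\chi_d=0\}$. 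The operator $m(D)$ then factors through the averaging map $f\mapsto\int f\,dx_3\cdots dx_d$, which is bounded from $W^{l,1}(\mathbb{T}^d)$ to $W^{l,1}(\mathbb{T}^2)$. Its output lies in $W^{l,2}(\mathbb{T}^2)$, viewed as functions constant in the remaining variables. The kernel becomes $m_0^{\vee}(x_1,x_2)\otimes 1$, and (\ref{dec-miu}) fails for it as soon as it fails for $m_0^{\vee}$ on $\mathbb{T}^2$.

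\textbf{Non-multiplier.} On $\mathbb{T}^2$ I would take a randomized symbol $m_0(\chi)=\varepsilon_\chi\langle\chi\rangle^{-1}$ with independent random signs $\varepsilon_\chi$. The boundedness above holds deterministically. Moreover $\sum_\chi\langle\chi\rangle^{-2}=\infty$ in dimension two (the divergence is logarithmic), so the Paley--Zygmund theorem makes the random series $k=\sum\varepsilon_\chi\langle\chi\rangle^{-1}e_\chi$ almost surely not a measure. It remains to upgrade this to failure of $k\in M(C^{l}(\mathbb{T}^2))$. I would argue by contradiction. Suppose (\ref{dec-miu}) holds. Evaluating it at $\alpha=l\mathbf{e}_j$ and taking Fourier transforms gives the identity (\ref{k-niu}) on the torus: on the cone $\{|\chi_j|>|\chi|/4\}$,
\begin{equation*}
m_0=\sum_{|\beta|=l}\frac{\chi^\beta}{\chi_j^l}\,\widehat{\nu}_{\beta,j}+O(|\chi|^{-1}),
\end{equation*}
with measures $\nu_{\beta,j}$. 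After multiplying by a smooth $0$-homogeneous cutoff adapted to the cone, the main part of $m_0$ becomes a combination of Fourier--Stieltjes transforms of measures, acted on by smooth $0$-homogeneous symbols. The remainder $O(|\chi|^{-1})$ should be treated as belonging to a fixed deterministic class that does not depend on the signs.

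\textbf{Main obstacle.} I expect the last step to be the hard one. The exceptional class arising there is a sum of $0$-homogeneous Calder\'on--Zygmund-type images of measures plus $O(|\chi|^{-1})$ terms, and it is not obviously a small set in the probabilistic sense. My first attempt would be to exploit that the probability of landing in such a deterministic class is $0$ or $1$, and to exclude the value $1$ by a Paley--Zygmund/Salem--Zygmund type argument for local integrability of random series that is uniform under these homogeneous operators. If that fails, I would fall back on Bonami--Mohanty's explicit deterministic construction. That construction replaces the random signs by a structured choice of frequencies along a thin set, for which the failure of (\ref{dec-miu}) can be checked directly by testing $\nabla^l k$ against suitable $C^l$ functions.
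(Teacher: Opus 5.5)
The parts of your plan up to the final step are fine. The boundedness step is correct for $l\ge 1$: it uses $|m|\lesssim\langle\chi\rangle^{-1}$ together with $W^{l,1}(\mathbb{T}^2)\subset W^{l-1,2}(\mathbb{T}^2)$. The reduction from $\mathbb{T}^d$ to $\mathbb{T}^2$ is also correct, and so is the Paley--Zygmund conclusion that the random kernel is almost surely not a measure. The paper gives no proof to compare with, since it simply quotes Bonami--Mohanty.

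\textbf{The gap.} You never show that the random kernel violates (\ref{dec-miu}). Being a non-measure is not enough, because $M(C^{l}(\mathbb{T}^2))$ contains non-measures when $l\ge 1$. Your proposed exceptional class also cannot work as stated. Every realization of $m_0$ is itself $O(|\chi|^{-1})$, so it lies in ``homogeneous images of measures plus $O(|\chi|^{-1})$'' (take all measures equal to zero), and no 0--1 law can exclude that. The lower-order terms in (\ref{k-niu}) with $|\beta|=l-1$ have exactly the order of $m_0$. Their structure (order $-1$ symbols times Fourier--Stieltjes transforms) therefore has to be kept, not absorbed into an error term.

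This step is the real content of the proposition, not a technicality. For $l=1$, the closed graph theorem shows that some symbol with $|m|\lesssim\langle\chi\rangle^{-1}$ lies outside $M(C^{1}(\mathbb{T}^2))$ if and only if $C^{1}(\mathbb{T}^2)\not\subset A(\mathbb{T}^2)$, where $A$ is the algebra of absolutely convergent Fourier series. That is the critical case of Bernstein's theorem. Falling back on Bonami--Mohanty's construction just cites what is to be proved.

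\textbf{What closes your route.} You need a necessary condition for (\ref{dec-miu}) that survives order-zero operators, and weak type $(1,1)$ provides one. Suppose $\partial_1^{l}k=\sum_{|\beta|\le l}\nabla^{\beta}\nu_{\beta}$ with $\nu_\beta\in\mathcal{M}(\mathbb{T}^2)$. Convolve with the Fej\'er kernel $F_R$ and apply $(1-\Delta)^{-l/2}$. For $|\beta|=l$, the operator $\nabla^{\beta}(1-\Delta)^{-l/2}$ is a Calder\'on--Zygmund operator. For $|\beta|<l$, it is convolution with an $L^1$ kernel. Since $\|F_R\ast\nu_\beta\|_{L^1}\le\|\nu_\beta\|_{\mathcal{M}}$, you get
\[
\sup_{R}\bigl\|(1-\Delta)^{-l/2}\partial_1^{l}(F_R\ast k)\bigr\|_{L^{1,\infty}}\lesssim\sum_{|\beta|\le l}\|\nu_\beta\|_{\mathcal{M}}<\infty .
\]
Now take $\widehat{k}(\chi)=\varepsilon_\chi\langle\chi\rangle^{-1}$. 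At each $x$, the function inside the norm is a Rademacher sum with variance
\[
s_R^2=\sum_\chi \chi_1^{2l}\langle\chi\rangle^{-2l-2}|\widehat{F_R}(\chi)|^2\gtrsim\log R .
\]
By the Paley--Zygmund inequality and Fubini, for each $R$, with probability at least $c>0$ its modulus exceeds $s_R/2$ on a set of measure at least $c$. Hence its $L^{1,\infty}$ quasinorm is $\gtrsim\sqrt{\log R}$. By reverse Fatou, with positive probability this happens for infinitely many $R$, so the supremum is infinite. For such a choice of signs, (\ref{dec-miu}) fails with $\alpha=l\mathbf{e}_1$. Combined with your boundedness and dimension-reduction steps, this would give a complete proof for every $l\ge 1$.
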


\bigskip

This shows that embedding (\ref{inf-1}) is strict. In principle, we could
still use the results of Section \ref{sec.inf} if any element $T_{k}$ of $%
M(W^{l,1}(\mathbb{T}^{d}))$ would be an element of $M(W^{l_{k},1}(\mathbb{T}%
^{d}))$, for some nonnegative integer $l_{k}$. However, whether or not this
is true is unknown at this moment.

\bigskip

As in Section \ref{sec.inf} we study first an Euclidean version of the
result. In this case, however, we will deal with a different kind of growth
functions than those considered in Section \ref{sec.inf}. Namely, we
consider here functions of linear growth.

\begin{lemma}
\label{lem.W1} If $T_{k}\in M(W^{l,1}(\mathbb{R}^{d}))$, then $k$ has the
Wiener property on $\mathbb{R}^{d}$ with respect to the growth functions $%
r_{\varepsilon }(t)=\varepsilon t$, for any $\varepsilon \in (0,1/2)$.
\end{lemma}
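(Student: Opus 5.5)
The plan is to imitate the proof of Lemma \ref{lem.W}, replacing the measure decomposition (\ref{dec-miu}), which is unavailable for $W^{l,1}$ (cf. Proposition \ref{prop.Bon-Moh}), by a compactness argument and the rigidity statement of Lemma \ref{lem.Bon-Poo}.

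\textbf{Step 1: uniform bounds on rescaled symbols.} Fix $\varepsilon\in(0,1/2)$ and write $\widehat{k}_{t}:=\widehat{k}(t\cdot)$. By Proposition \ref{prop.b-inhom}, $\widehat{k}\in C_{b}(\mathbb{R}^{d})$. By (\ref{1-bound}), the operators $\widehat{k}_{t}(D)$ are bounded from $W^{l,1}(\mathbb{R}^{d})$ to $\dot{W}^{l,1}(\mathbb{R}^{d})$, uniformly in $t\geq 1$. The same scaling computation as in (\ref{1-bound}) shows more. For the pieces $\nabla^{j}\widehat{k}_{t}(D)f$ with $j<l$, the $L^{1}$ norms carry factors $t^{j-l}$. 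Hence, for a fixed Schwartz function $f$, the inhomogeneous norm $\Vert \widehat{k}_{t}(D)f\Vert_{W^{l,1}}$ is bounded by $\Vert \nabla^{l}f\Vert_{L^{1}}+o_{t\to\infty}(1)$ times a constant.

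\textbf{Step 2: averaged symbols and their limits.} The radius $r_{\varepsilon}(t)=\varepsilon t$ turns the averages into a fixed-scale convolution. Let $\phi_{\varepsilon}:=|B(0,\varepsilon)|^{-1}\mathbf{1}_{B(0,\varepsilon)}$. Then
\begin{equation*}
m^{t}(\xi):=\strokedint_{B(t\xi,\varepsilon t)}\widehat{k}(\eta)d\eta=(\widehat{k}_{t}\ast\phi_{\varepsilon})(\xi)=\strokedint_{B(0,\varepsilon)}\widehat{k}_{t}(\xi+\eta)d\eta .
\end{equation*}
Exactly as in (\ref{m-s-2}), this gives $m^{t}(D)F=\strokedint_{B(0,\varepsilon)}e_{-\eta}\,\widehat{k}_{t}(D)(e_{\eta}F)\,d\eta$. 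Modulation by $e_{\pm\eta}$ with $|\eta|<\varepsilon$ is bounded on $W^{l,1}$ with constant $\lesssim(1+\varepsilon)^{l}$. Together with Step 1, this yields
\begin{equation*}
\Vert \nabla^{l}m^{t}(D)f\Vert_{L^{1}}\lesssim \Vert\nabla^{l}f\Vert_{L^{1}}+o_{t\to\infty}(1)
\end{equation*}
for each Schwartz $f$. Unlike in Lemma \ref{lem.W}, the term $II_{t}$ does not tend to zero, because the ball radius is of order $t$. This is why the averaging is kept inside the operator rather than split off.

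Moreover $\Vert m^{t}\Vert_{L^{\infty}}\leq\Vert\widehat{k}\Vert_{L^{\infty}}$, and $m^{t}$ is Lipschitz with constant $\lesssim\Vert\widehat{k}\Vert_{L^{\infty}}/\varepsilon$, uniformly in $t$. By Arzel\`a--Ascoli, every sequence $t_{n}\to\infty$ has a subsequence along which $m^{t_{n}}\to m$ locally uniformly. Passing to the limit on Schwartz functions, each such limit $m$ satisfies $\Vert\nabla^{l}m(D)f\Vert_{L^{1}}\lesssim\Vert\nabla^{l}f\Vert_{L^{1}}$, i.e. $m(D)\in M(\dot{W}^{l,1}(\mathbb{R}^{d}))$.

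\textbf{Step 3: the limits are constant.} It remains to show that every such limit $m$ is constant, equal to one common value. This would give the existence of $\lim_{t}m^{t}(\omega)$ and its independence of $\omega\in\mathbb{S}^{d-1}$, which is the Wiener property. This is the main obstacle, because $m$ is not a priori homogeneous: rescaling $t$ changes the radius of the averaging ball, so it does not simply dilate $m$.

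My first attempt is to work at the level of weak$^{\ast}$ limits $\sigma$ of $\widehat{k}_{t_{n}}$ in $L^{\infty}$, for which $m=\sigma\ast\phi_{\varepsilon}$. The set $\Sigma$ of all such limits is weak$^{\ast}$ closed and invariant under dilations $\sigma\mapsto\sigma(s\cdot)$, and each $\sigma\in\Sigma$ lies in $M(\dot{W}^{l,1})$ with a uniform bound.

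By Lemma \ref{lem.Bon-Poo}, each $\sigma\in\Sigma$ is continuous on $\mathbb{R}^{d}\setminus\{0\}$. I would then try to show that $\Sigma$ consists of constants, via a quantitative form of the Bonami--Poornima argument. The Ornstein-type obstruction should control the oscillation of any bounded family of $\dot{W}^{l,1}$ multipliers that is closed under dilations. As a fallback, averaging $\sigma(s\cdot)$ over $s$ with respect to $ds/s$ on $[1,S]$ and letting $S\to\infty$ produces homogeneous elements of the closed convex hull of $\Sigma$. These are constant by Lemma \ref{lem.Bon-Poo}, and one would then need to show that they coincide with $\sigma\ast\phi_{\varepsilon}$.

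Finally, the common constant is unique. Two subsequences giving different constants $c_{1}\neq c_{2}$ would allow one to interpolate along $t$ and build a limit that is nonconstant in the radial direction, contradicting the previous step.
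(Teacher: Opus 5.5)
Your proposal has a genuine gap. It sits exactly where you place the ``main obstacle'': Step 3 never proves that the averages of $\widehat{k}$ over $B(t\omega,\varepsilon t)$ converge. The mechanism you propose, compactness in $t$ plus rigidity of homogeneous $\dot{W}^{l,1}$ multipliers (Lemma \ref{lem.Bon-Poo}), cannot supply this even if every step of it were carried out. The implication ``all blow-up limits of $\widehat{k}(t_{n}\cdot)$ are constant $\Rightarrow$ Wiener property'' is false for bounded symbols. Take a radial symbol equal to $g(\log\log|\xi|)$ for large $|\xi|$, with $g$ smooth, bounded and oscillating. Every blow-up limit is a constant $g(s)$, locally uniformly on $\mathbb{R}^{d}\setminus\{0\}$, so $\Sigma$ consists of constants and Lemma \ref{lem.Bon-Poo} is never contradicted. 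Yet the averages over $B(t\omega,\varepsilon t)$ equal $g(\log\log t)+o(1)$ and have no limit. Your closing ``interpolation'' step fails for precisely this reason. The limit set of the continuous path $t\mapsto\widehat{k}_{t}$ is connected and may be a whole arc of constants. Because the oscillation is slower than any fixed dilation factor, no radially non-constant limit ever appears. The log-averaging fallback does not help either: showing that the homogenized symbol equals $\sigma\ast\phi_{\varepsilon}$ is a restatement of what must be proved. So you have to use the multiplier hypothesis again, in a way that blow-up limits cannot see.

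What is missing is a genuinely multi-scale argument, and this is what the paper supplies. It averages over $B(\xi,\varepsilon|\xi|)$ rather than $B(\xi,\varepsilon t)$. The resulting $m_{\varepsilon}$ is an average of $\widehat{k}\circ L_{y}$, with $L_{y}=|y|R_{y}$ and $y\in B(\mathbf{e}_{1},\varepsilon)$. It therefore lies in $M(W^{l,1}(\mathbb{R}^{d}))$, and by (\ref{Lip-0}) it varies slowly at scale $|\xi|$ (Lemma \ref{lem.Lip}). Suppose $\lim_{t}m_{\varepsilon}(t\omega_{0})$ did not exist. In the symmetric or asymmetric case, one picks lacunary integer frequencies $a_{n}$ on the ray, taken from $t_{2n}$ or $t_{2n+1}$ according to $\sigma_{n}$. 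At these frequencies $m_{\varepsilon}\approx\sigma_{n}$, respectively $m_{\varepsilon}\approx\frac{1+\epsilon_{n}}{2}\sigma_{n}$. Estimate (\ref{Lip-0}) propagates this to the whole spectrum of the Riesz product. Lemmas \ref{lem.IIs} and \ref{lem.IIs'} then give a $\ln N$ blow-up, contradicting $m_{\varepsilon}(D)\in M(W^{l,1}(\mathbb{R}^{d}))$. This is Lemma \ref{lem.mean1}, and its symmetric case is exactly what excludes slow radial oscillation like the example above. Only after this is $m_{\varepsilon}^{\infty}$ known to be $0$-homogeneous, so that the dilation argument and Lemma \ref{lem.Bon-Poo} can finish the proof.

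A smaller point concerns Steps 1--2, where the scaling is reversed. The bound (\ref{1-bound}) controls only the homogeneous seminorm of the output, with factors $t^{j-l}$ on the lower-order norms of the \emph{input}. For lower-order derivatives of the output, scaling only gives $\Vert\nabla^{j}\widehat{k}_{t}(D)f\Vert_{L^{1}}\lesssim\sum_{i\leq l}t^{i-j}\Vert\nabla^{i}f\Vert_{L^{1}}$, which grows like $t^{l-j}$. Since $\nabla^{l}(e_{-\eta}h)$ involves $\nabla^{j}h$ for $j<l$, your uniform $\dot{W}^{l,1}$ bound for the modulation average $m^{t}(D)$ is unjustified. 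The paper's radius $\varepsilon|\xi|$ avoids modulations altogether.
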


The proofs here are not completely analogous to those in Section \ref%
{sec.inf}. We do not use here Wiener's lemma for measures (Lemma \ref%
{lem.Wien}) as we did in the proof of Lemma \ref{lem.mean}. Instead, we use
results proved via the Riesz product technique as used in \cite{W}, \cite{KW}
and reused in \cite{CE-F}. In \cite{KW} the Riesz products were used in
order to show that any Fourier symbol of a bounded Fourier multiplier on $%
\dot{W}^{1,1}(\mathbb{R}^{d})$ must be continuous. In \cite{CE-F}, the same
construction, used in a slightly different way, was employed to extend this
result to the case of $\dot{W}^{l,1}(\mathbb{R}^{d})$ (and $\dot{W}%
^{l,\infty }(\mathbb{R}^{d})$), for any integer $l\geq 1$. We use these
methods in the proof of Lemmas \ref{lem.IIs} and \ref{lem.IIs'} below;
then, we use further these results (together with some ideas in \cite{W}) to
show that the corresponding averages of symbols have limit in any direction.
The fact that this limit does not depend on the direction will be a
consequence of the Bonami-Poornima theorem (see Lemma \ref{lem.Bon-Poo}).

The construction of the appropriate Riesz products that will be used next
rests on a dichotomy similar to the one in \cite[Section 2]{KW} (or \cite[%
Section 3]{CE-F}). Suppose $m:\mathbb{R}^{d}\rightarrow \mathbb{C}$ is a
bounded function, continuous on $\mathbb{R}^{d}\backslash \{0\}$, such that
there exists some $\omega _{0}\in \mathbb{R}^{d}\backslash \{0\}$ for which
the limit $\lim_{t\rightarrow \infty }m(t\omega _{0})$ does not exists. We
have at least one of the following two situations:

\bigskip

\begin{itemize}
\item[(i)] \textit{The symmetric case}: there exists $b_{1},b_{2}\in \mathbb{%
C}$, with $b_{1}\neq b_{2}$, and a sequence of positive numbers $%
(t_{n})_{n\geq 1}$, with $\lim_{n\rightarrow \infty }t_{n}=\infty $, and
such that 
\begin{equation}
\lim_{n\rightarrow \infty }m(-t_{2n}\omega _{0})=\lim_{n\rightarrow \infty
}m(t_{2n}\omega _{0})=b_{1}\text{ \ \ and \ \ }\lim_{n\rightarrow \infty
}m(-t_{2n+1}\omega _{0})=\lim_{n\rightarrow \infty }m(t_{2n+1}\omega
_{0})=b_{2};  \label{IIs}
\end{equation}

\medskip

\item[(ii)] \textit{The asymmetric case}: there exists $b_{1},b_{2}\in 
\mathbb{C}$, with $b_{1}\neq b_{2}$, and a sequence of positive numbers $%
(t_{n})_{n\geq 1}$, with $\lim_{n\rightarrow \infty }t_{n}=\infty $, and
such that 
\begin{equation}
\lim_{n\rightarrow \infty }m(-t_{n}\omega _{0})=b_{1}\text{ \ \ \ and \ \ }%
\lim_{n\rightarrow \infty }m(t_{n}\omega _{0})=b_{2}.  \label{IIa}
\end{equation}
\end{itemize}

\bigskip

In what follows we consider show that if a given symbol $m$ satisfies
certain conditions, then $m(D)$ can not be a bounded multiplier on $W^{l,1}(%
\mathbb{R}^{d})$. These conditions correspond to the symmetric and
asymmetric conditions above, (\ref{IIs}), (\ref{IIa}) respectively; however,
they are not synonymous. The conditions (\ref{IIs}), (\ref{IIa}) are
explicitly used only in Lemma \ref{lem.mean1} below.

\subsubsection{A construction corresponding to the symmetric case}

According to the easy Lemma 16 in \cite{CE-F}, for any positive integer $N$
there exists a finite sequence $\left( \sigma _{n}\right) _{1\leq n\leq N}$
in $\left\{ 0,1\right\} $ such that%
\begin{equation}
\left\vert \sum_{n=1}^{N}\frac{\sigma _{k}}{2n}\prod_{j=1}^{n-1}\left( 1+%
\frac{i}{2j}\right) \right\vert \geq \frac{1}{2\pi }\ln N\text{.}
\label{numar1mult}
\end{equation}

\bigskip

Suppose $N\in \mathbb{N}^{\ast }$ is fixed and $\sigma _{1},...,\sigma
_{N}\in \left\{ 0,1\right\} $ are some fixed numbers such that inequality (%
\ref{numar1mult}) holds. Fix also a function $m:\mathbb{R}^{d}\rightarrow 
\mathbb{C}$ and a sequence $\left( a_{n}\right) _{1\leq n\leq N}$ in $%
\mathbb{Z}^{d}$. Concerning $m$, $\left( \sigma _{n}\right) _{1\leq n\leq N}$
and $\left( a_{n}\right) _{1\leq n\leq N}$ we consider the following
properties (that might or not be satisfied): \bigskip

\begin{itemize}
\item[(P1)] for each $n\in \left\{ 1,...,N\right\} $ we have%
\begin{equation*}
\left\vert m\left( \epsilon _{n}a_{n}+\sum_{1\leq j\leq n-1}\epsilon
_{j}a_{j}\right) -\sigma _{n}\right\vert <\frac{1}{4^{N}}\text{,}
\end{equation*}%
for all $\epsilon _{1},...,\epsilon _{n}\in \left\{ -1,0,1\right\} $ with $%
\varepsilon _{n}\neq 0$;

\item[(P2)] for each $n\in \left\{ 1,...,N-1\right\} $ we have%
\begin{equation*}
4\left\vert a_{n}(1)\right\vert <\left\vert a_{n+1}(1)\right\vert ;
\end{equation*}

\item[(P3)] for each $n\in \left\{ 1,...,N\right\} $ we have%
\begin{equation*}
0<\left\vert a_{n}(1)+\sum_{1\leq j\leq n-1}\epsilon _{j}a_{j}(1)\right\vert 
\text{,}
\end{equation*}%
for all $\epsilon _{1},...,\epsilon _{n-1}\in \left\{ -1,0,1\right\} $;

\item[(P4)] for each $n\in \left\{ 1,...,N\right\} $ we have%
\begin{equation*}
\frac{1+\left\vert a_{n}(s)+\sum_{1\leq j\leq n-1}\epsilon
_{j}a_{j}(s)\right\vert ^{l}}{\left\vert a_{n}(1)+\sum_{1\leq j\leq
n-1}\epsilon _{j}a_{j}(1)\right\vert ^{l}}<\frac{1}{4^{N}}\text{,}
\end{equation*}%
for all $\epsilon _{1},...,\epsilon _{n-1}\in \left\{ -1,0,1\right\} $ and
all $s\in \{2,3,...,d\}$.
\end{itemize}

\bigskip

(Here, we denote by $a_{n}(s)$ the $s$-th coordinate of the vector $%
a_{n}=(a_{n}(1),...,a_{n}(d))\in \mathbb{Z}^{d}$.)

\bigskip

\begin{lemma}
\label{lem.IIs} Consider $m\in C_{b}(\mathbb{R}^{d}\backslash \{0\})$.
Suppose that for any positive integer $N$ there exists two finite sequences $%
\left( \sigma _{n}\right) _{1\leq n\leq N}$ , in $\left\{ 0,1\right\} $, and 
$\left( a_{n}\right) _{1\leq n\leq N}$ , in $\mathbb{Z}^{d}$, satisfying the
conditions (\ref{numar1mult}) and (P1)--(P4). Then, $m(D)$ is not a bounded
operator from $W^{l,1}(\mathbb{R}^{d})$ to $\dot{W}^{l,1}(\mathbb{R}^{d})$.
\end{lemma}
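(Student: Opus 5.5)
Assuming $m(D)$ is bounded from $W^{l,1}(\mathbb{R}^{d})$ to $\dot{W}^{l,1}(\mathbb{R}^{d})$ with norm $C$, we test it on a Riesz product and derive a contradiction as $N\to\infty$. Fix $N$ and the data $(\sigma_n)$, $(a_n)$ from the hypothesis. Take a Schwartz function $\phi$ with $\widehat{\phi}$ supported in a small ball around the origin, say $B(0,1/8)$, and consider
\begin{equation*}
f_N(x):=\phi(x)\prod_{n=1}^{N}\Bigl(1+\cos\langle a_n,x\rangle\Bigr).
\end{equation*}
Since $1+\cos\ge0$ and these are (essentially) lacunary trigonometric products, $\|f_N\|_{L^1}\lesssim\|\phi\|_{L^1}$, uniformly in $N$. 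Expanding the product, the frequencies are $\sum_j\epsilon_j a_j$ with $\epsilon_j\in\{-1,0,1\}$. By (P2) these are distinct, and the blocks with top index $n$ (that is, $\epsilon_n\neq0$ and $\epsilon_j=0$ for $j>n$) are well separated in the first coordinate.

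The derivatives must be controlled because the norm is $W^{l,1}$. Following \cite{KW}, \cite{CE-F}, I would not use $f_N$ directly. Instead I would use $F_N:=\partial_1^{-l}$ applied block by block, i.e. divide each frequency block by $(i\xi_1)^l$; this is meaningful by (P3), which keeps $\xi_1$ away from $0$. Conditions (P3) and (P4) then show that $\partial_1^lF_N$ is $f_N$ minus its zero block. Every other derivative $\nabla^\beta F_N$ with $|\beta|\le l$, $\beta\neq l\mathbf{e}_1$, is a sum of at most $3^N$ terms, each with a coefficient smaller than $4^{-N}$ by (P4), so these have uniformly bounded $L^1$ norm. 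Hence $\|F_N\|_{W^{l,1}}\lesssim1$ uniformly in $N$.

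Next apply $m(D)$. By (P1), on the block with top index $n$ the symbol is within $4^{-N}$ of $\sigma_n$; the smoothing by $\widehat\phi$ moves frequencies by less than $1/8$, which is harmless once $m$ is close to $\sigma_n$ on these neighborhoods (continuity of $m$; the data can be chosen so that this holds). So $\partial_1^l m(D)F_N$ differs by $O(3^N4^{-N})$ in $L^1$ from
\begin{equation*}
\phi(x)\sum_{n=1}^{N}\sigma_n\cos\langle a_n,x\rangle\prod_{j<n}\bigl(1+\cos\langle a_j,x\rangle\bigr).
\end{equation*}
The lower bound on the $L^1$ norm of this sum is the main obstacle. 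My plan is to pair it against a bounded test function $g$, namely a second Riesz product of the form $\prod_n\bigl(1+\tfrac{i}{2n}\cos\langle a_n,x\rangle\bigr)$, normalized so that $\|g\|_{L^\infty}$ stays bounded, as in \cite[Lemma 16]{CE-F}. Lacunarity makes the pairing reduce to $\sum_n\frac{\sigma_n}{2n}\prod_{j<n}(1+\frac{i}{2j})$ up to constants. By (\ref{numar1mult}) this is at least $\frac{1}{2\pi}\ln N$, which forces $\|m(D)F_N\|_{\dot{W}^{l,1}}\gtrsim\ln N$. Together with $\|F_N\|_{W^{l,1}}\lesssim1$, this contradicts boundedness as $N\to\infty$. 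The delicate steps are the exact orthogonality computation and the bookkeeping of the $3^N4^{-N}$ error terms.
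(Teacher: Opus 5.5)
Your route differs from the paper's, and it works after one repair. The paper argues by duality. Boundedness $W^{l,1}\to\dot W^{l,1}$ gives $\partial_1^l m(D)R_N=\sum_{|\alpha|\le l}\nabla^\alpha u_\alpha$ with $\|u_\alpha\|_{L^\infty}\le C$, where $R_N=-1+\prod_n(1+\frac{i}{n}\cos\langle x,a_n\rangle)$ is the bounded Riesz product. The $u_\alpha$ are then made periodic via \cite[Lemma 17]{CE-F}, and everything is convolved on $\mathbb{T}^d$ with the $L^1$-bounded product $G_N$ to confine the spectrum to $\Lambda_N$. Dividing by $\partial_1^l$ using (P3)--(P4) gives an $L^\infty$ bound that contradicts $\|m(D)(R_N\ast G_N)\|_{L^\infty}\gtrsim\ln N$.

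You instead test $m(D)$ directly on an element of $W^{l,1}(\mathbb{R}^d)$ built from $\phi G_N$, where the cutoff $\phi$ makes it integrable, and you use the bounded product only as a test function in a pairing. Both proofs estimate the same number: your pairing equals $\widehat\phi(0)\sum_{\lambda\in\Lambda_N}m(\lambda)\prod_{\epsilon_j\neq0}\frac{i}{4j}$, which is $\widehat\phi(0)\,m(D)(R_N\ast G_N)(0)$ in the paper's notation. Your version avoids the description of $(W^{l,1})^{\ast}$ and the periodization lemma, at the price of the cutoff. It is the $L^1$ analogue of the direct approach mentioned in Remark \ref{rem.1-2}.

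What does not hold as written is the claim that $\partial_1^l m(D)F_N$ is within $O(3^N4^{-N})$ in $L^1$ of the model sum $\phi\sum_n\sigma_n\cos\langle a_n,x\rangle\prod_{j<n}(1+\cos\langle a_j,x\rangle)$. Hypothesis (P1) controls $m$ only on $\Lambda_N$, and the data $(a_n),(\sigma_n)$ are given, so they cannot be chosen to help. You could shrink $\supp\widehat\phi$, depending on $N$, so that $|m-\sigma_n|<4^{-N}$ near each $\lambda$ by continuity. Even so, uniform smallness of a merely continuous symbol on a Fourier support does not make $(m(D)-\sigma_n)(\phi e^{i\langle\lambda,\cdot\rangle})$ small in $L^1$.

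The remedy is to drop this intermediate step and pair $h:=\partial_1^l m(D)F_N\in L^1$ directly with the trigonometric polynomial $g$. The transform $\widehat h$ is continuous and equals $m(\xi)\sum_{\lambda\neq0}c_\lambda\widehat\phi(\xi-\lambda)$. Distinct points of $\Lambda_N$ are at distance at least $1$ and $\supp\widehat\phi\subset B(0,1/8)$, so $\int hg$ involves only the values of $m$ on $\Lambda_N=-\Lambda_N$. Then (P1) gives an error of modulus at most $3^N4^{-N}\le 1$ as a number, with no $L^1$ estimate needed; take $\widehat\phi(0)=1$.

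Also take $g=\prod_n(1+\frac{i}{n}\cos\langle a_n,x\rangle)$ rather than $\frac{i}{2n}$. With $c_\lambda=2^{-\#\{j:\epsilon_j\neq0\}}$ and Fourier coefficients $d_\lambda=\prod_{\epsilon_j\neq0}\frac{i}{2j}$ of $g$, you get $c_\lambda d_\lambda=\prod_{\epsilon_j\neq0}\frac{i}{4j}$. The main term is then exactly $i\sum_n\frac{\sigma_n}{2n}\prod_{j<n}(1+\frac{i}{2j})$, to which (\ref{numar1mult}) applies. With $\frac{i}{2n}$ you obtain a different product that (\ref{numar1mult}) does not control.

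With these changes, your bound $\|F_N\|_{W^{l,1}}\lesssim 1$ yields the contradiction. That bound uses (P4) in the same way as the paper's bound on the right-hand side of (\ref{m-alfa-2}).
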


\noindent\textbf{Proof.} We follow closely the methods used in \cite[Section
6.1]{CE-F}. We introduce the Euclidean \textquotedblleft Riesz
product\textquotedblright\ (see \cite[(19)]{CE-F}) 
\begin{equation}
R_{N}(x):=-1+\prod_{n=1}^{N}\left( 1+\frac{i}{k}\cos \left\langle
x,a_{n}\right\rangle \right) ,  \label{Riesz}
\end{equation}%
with $x\in \mathbb{R}^{d}$.

Supposing that $m(D)$ is bounded from $W^{l,1}(\mathbb{R}^{d})$ to $\dot{W}%
^{l,1}(\mathbb{R}^{d})$ we get (by duality) that there exists a family $%
(u_{\alpha })_{|\alpha |=l}$ of bounded functions such that 
\begin{equation*}
\partial _{1}^{l}m(D)R_{N}=\sum_{|\alpha |\leq l}\nabla ^{\alpha }u_{\alpha
},
\end{equation*}
in the sense of distributions on $\mathbb{R}^{d}$ and $\left\Vert u_{\alpha
}\right\Vert _{L^{\infty }}\leq C$, for any $\alpha $ with $|\alpha |\leq l$%
, for some constant $C$ independent of $N$.

By using Lemma 17 in \cite{CE-F} one can replace the functions $u_{\alpha }$
by functions that are component-wise $2\pi -$periodic (notice that $\partial
_{1}^{l}m(D)R_{N}$ is already component-wise $2\pi -$periodic). Namely, we
have 
\begin{equation}
\partial _{1}^{l}m(D)R_{N}=\sum_{|\alpha |\leq l}\nabla ^{\alpha }g_{\alpha
},  \label{m-galfa}
\end{equation}%
for some bounded functions $g_{\alpha }$ that are component-wise $2\pi -$%
periodic and $\left\Vert g_{\alpha }\right\Vert _{L^{\infty }}\leq C$, for
any $\alpha $ with $|\alpha |\leq l$.

Now we have from (\ref{m-galfa}) that 
\begin{equation}
\partial _{1}^{l}\left( m(D)R_{N}\ast G_{N}\right) =\sum_{|\alpha |\leq
l}\nabla ^{\alpha }(g_{\alpha }\ast G_{N}),  \label{m-alfa-1}
\end{equation}%
where 
\begin{equation*}
G_{N}(x):=-1+\prod_{n=1}^{N}\left( 1+\cos \left\langle x,a_{n}\right\rangle
\right) ,
\end{equation*}%
and the convolution in (\ref{m-alfa-1}) is taken with respect to $\mathbb{T}%
^{d}$ (note that both sides of (\ref{m-alfa-1}) are component-wise $2\pi -$%
periodic).

One can easily observe that, thanks to the convolution against $G_{N}$, the
spectrum of each term of (\ref{m-alfa-1}) embeds in the set 
\begin{equation}
\Lambda _{N}:=\left\{ \left. \sum_{1\leq j\leq N}\epsilon _{j}a_{j}\text{ }%
\right\vert \text{ }\epsilon _{1},...,\epsilon _{N}\in \left\{
-1,0,1\right\} \right\} \backslash \{0\}\text{,}  \label{m-spec}
\end{equation}%
while $\left\Vert g_{\alpha }\ast G_{N}\right\Vert _{L^{\infty }}\leq 2C$,
due to the fact that $\left\Vert G_{N}\right\Vert _{L^{\infty }}\leq 2$.
Thanks to (P3) we can write 
\begin{equation}
m(D)(R_{N}\ast G_{N})=\sum_{|\alpha |\leq l}\partial _{1}^{-l}\nabla
^{\alpha }(g_{\alpha }\ast G_{N}),  \label{m-alfa-2}
\end{equation}%
in the sense of distributions on $\mathbb{T}^{d}$ and it remains to see that
the left hand side of (\ref{m-alfa-2}) blows up when $N\rightarrow \infty $,
while the right hand side of (\ref{m-alfa-2}) remains bounded.

Indeed, thanks to the special form of the spectrum (see (\ref{m-spec})) and
(P4) (see \cite[(45)]{CE-F}) we get 
\begin{equation*}
\left\Vert \partial _{1}^{-l}\nabla ^{\alpha }(g_{\alpha }\ast
G_{N})\right\Vert _{L^{\infty }}\lesssim C,
\end{equation*}%
for any $\alpha $ with $|\alpha |\leq l$. This shows that the right hand
side of (\ref{m-alfa-2}) is bounded.

\bigskip

However, as in \cite{CE-F} (see (46)--(50) in \cite{CE-F}) the inequality (%
\ref{numar1mult}) together with (P1) lead to 
\begin{equation}
\left\Vert m(D)(R_{N}\ast G_{N})\right\Vert _{L^{\infty }}\gtrsim \ln N,
\label{m-logN}
\end{equation}%
which in view of (\ref{m-alfa-2}) gives a contradiction.

\bigskip

Let us justify (\ref{m-logN}). In order to estimate the left hand side of (%
\ref{m-alfa-2}) we write it in the form 
\begin{equation}
m(D)(R_{N}\ast G_{N})\left( x\right) =\sum_{n=1}^{N}\sum_{\substack{ %
\varepsilon _{1},...,\varepsilon _{n}\in \left\{ -1,0,1\right\}  \\ %
\varepsilon _{n}\neq 0}}m(\epsilon _{1}a_{1}+...\epsilon _{n}a_{n})\left(
\prod_{\epsilon _{j}\neq 0}\frac{i}{4j}\right) e^{i\left\langle x,\epsilon
_{1}a_{1}+...+\epsilon _{n}a_{n}\right\rangle },  \label{multTmRb}
\end{equation}%
and we compare it with the function 
\begin{equation}
Z(x):=\sum_{n=1}^{N}\sum_{\substack{ \epsilon _{1},...,\epsilon _{n}\in
\left\{ -1,0,1\right\}  \\ \epsilon _{n}\neq 0}}\sigma _{n}\left(
\prod_{\epsilon _{j}\neq 0}\frac{i}{4j}\right) e^{i\left\langle
x,\varepsilon _{1}a_{1}+...+\varepsilon _{n}a_{n}\right\rangle }\text{,}
\label{multZ0b}
\end{equation}%
defined on $\mathbb{T}^{d}$. Now, by (\ref{multTmRb}) and (\ref{multZ0b})
the property (P1) gives us 
\begin{equation}
\left\Vert m(D)(R_{N}\ast G_{N})-Z\right\Vert _{L^{\infty }(\mathbb{T}%
^{2})}\leq \left\vert \Lambda _{N}\right\vert 4^{-N}\leq 3^{N}4^{-N}\leq 1%
\text{.}  \label{multrestb}
\end{equation}%
\ 

\bigskip

On the other hand, one can write (see \cite[(48)]{CE-F}) 
\begin{equation*}
Z(x)=\sum_{n=1}^{N}\frac{i\sigma _{n}}{2n}\cos \left\langle
x,a_{n}\right\rangle \prod_{j=1}^{n-1}\left( 1+\frac{i}{2j}\cos \left\langle
x,a_{j}\right\rangle \right) \text{,}
\end{equation*}%
and by (\ref{numar1mult}) we get%
\begin{equation}
\left\Vert Z\right\Vert _{L^{\infty }(\mathbb{T}^{2})}\geq \left\vert
Z(0)\right\vert \geq \frac{1}{2\pi }\ln N\text{.}  \label{multZb}
\end{equation}

The inequalities (\ref{multrestb}), (\ref{multZb}) gives us (\ref{m-logN}%
).\hfill $\square $

\subsubsection{A construction corresponding to the asymmetric case}

Similar to the symmetric case, we consider here the above properties
(P2)--(P4) and (P1') below:

\begin{itemize}
\item[(P1')] for each $n\in \left\{ 1,...,N\right\} $ we have%
\begin{equation*}
\left\vert m\left( \epsilon _{n}a_{n}+\sum_{1\leq j\leq n-1}\epsilon
_{j}a_{j}\right) -\frac{1+\epsilon _{n}}{2}\sigma _{n}\right\vert <\frac{1}{%
4^{N}}\text{,}
\end{equation*}%
for all $\epsilon _{1},...,\epsilon _{n}\in \left\{ -1,0,1\right\} $ with $%
\epsilon _{n}\neq 0$.
\end{itemize}

\bigskip

In this setting we have:

\begin{lemma}
\label{lem.IIs'} Consider $m\in C_{b}(\mathbb{R}^{d}\backslash \{0\})$.
Suppose that for any positive integer $N$ there exists two finite sequences $%
\left( \sigma _{n}\right) _{1\leq n\leq N}$ , in $\left\{ 0,1\right\} $, and 
$\left( a_{n}\right) _{1\leq n\leq N}$ , in $\mathbb{Z}^{d}$, satisfying the
conditions (\ref{numar1mult}) and (P1'), (P2)--(P4). Then, $m(D)$ is not a
bounded operator from $W^{l,1}(\mathbb{R}^{d})$ to $\dot{W}^{l,1}(\mathbb{R}%
^{d})$.
\end{lemma}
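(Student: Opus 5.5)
We run the same argument as for Lemma \ref{lem.IIs}, with the same Riesz product $R_{N}$ and the same smoothing $G_{N}$. Only the comparison function $Z$ changes, to reflect the one-sided condition (P1'). Assume $m(D)$ is bounded from $W^{l,1}(\mathbb{R}^{d})$ to $\dot{W}^{l,1}(\mathbb{R}^{d})$.

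First, the upper bound. By duality and Lemma 17 in \cite{CE-F} we obtain (\ref{m-galfa}) with $2\pi$-periodic $g_{\alpha}$ satisfying $\|g_{\alpha}\|_{L^{\infty}}\leq C$, with $C$ independent of $N$. Convolving with $G_{N}$ puts the spectrum in $\Lambda_{N}$. Then (P3) allows us to invert $\partial_{1}^{l}$, giving (\ref{m-alfa-2}). Properties (P2) and (P4), used exactly as in \cite[(45)]{CE-F}, bound the right-hand side of (\ref{m-alfa-2}) in $L^{\infty}$ uniformly in $N$. None of this uses (P1) or (P1'), so it carries over verbatim.

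Next, the lower bound. Expanding $m(D)(R_{N}\ast G_{N})$ as in (\ref{multTmRb}), we compare it with
\begin{equation*}
Z'(x):=\sum_{n=1}^{N}\sum_{\substack{\epsilon_{1},\dots,\epsilon_{n}\in\{-1,0,1\}\\ \epsilon_{n}\neq 0}}\frac{1+\epsilon_{n}}{2}\,\sigma_{n}\Big(\prod_{\epsilon_{j}\neq 0}\frac{i}{4j}\Big)e^{i\langle x,\epsilon_{1}a_{1}+\dots+\epsilon_{n}a_{n}\rangle}.
\end{equation*}
By (P1'), the difference is bounded by $|\Lambda_{N}|4^{-N}\leq 1$, as in (\ref{multrestb}). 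In $Z'$ only the terms with $\epsilon_{n}=1$ survive. Summing over $\epsilon_{1},\dots,\epsilon_{n-1}$ factorizes the expression:
\begin{equation*}
Z'(x)=\sum_{n=1}^{N}\frac{i\sigma_{n}}{4n}e^{i\langle x,a_{n}\rangle}\prod_{j=1}^{n-1}\Big(1+\frac{i}{2j}\cos\langle x,a_{j}\rangle\Big).
\end{equation*}
At $x=0$ this gives
\begin{equation*}
Z'(0)=\frac{i}{2}\sum_{n=1}^{N}\frac{\sigma_{n}}{2n}\prod_{j=1}^{n-1}\Big(1+\frac{i}{2j}\Big),
\end{equation*}
so (\ref{numar1mult}) yields $|Z'(0)|\geq \frac{1}{4\pi}\ln N$. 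Hence $\|m(D)(R_{N}\ast G_{N})\|_{L^{\infty}}\gtrsim \ln N$, while the right-hand side of (\ref{m-alfa-2}) stays bounded. Letting $N\to\infty$ gives the contradiction.

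The main point to check is the factorization of $Z'$. Restricting to $\epsilon_{n}=1$ halves the weight of the top frequency: the factor $\frac{i}{4n}$ replaces the $\frac{i}{2n}\cos$ term of the symmetric case. The lower-order factors are unaffected, because (P1') constrains only the sign of the top index $\epsilon_{n}$. Consequently the logarithmic lower bound loses only a factor of $1/2$.
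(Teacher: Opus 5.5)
Your proposal is correct and follows the paper's proof: you use the same Riesz product $R_N$ and the same smoothing by $G_N$, you carry the upper bound over verbatim from Lemma \ref{lem.IIs}, and you compare with the one-sided function $Z'$ via (P1'). Your evaluation $Z'(0)=\frac{1}{2}Z(0)$ is the accurate one; the paper asserts that $Z'(0)$ equals the expression for $Z(0)$, overlooking that the sum over $\epsilon_n\in\{-1,1\}$ contributes a factor $2$ in $Z(0)$ but only $1$ in $Z'(0)$, and this factor $1/2$ is harmless for the $\ln N$ blow-up.
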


\noindent \textbf{Proof}. The proof is almost identical to the proof of
Lemma \ref{lem.IIs}. The test function that we use is again an
\textquotedblleft Euclidean\textquotedblright\ Riesz product as in (\ref%
{Riesz}). The considered differential equations and the needed estimates are
in general the same. The only difference is that, when estimating the left
hand side of (the analogue of) (\ref{m-alfa-2}), we have to compare it with
the function 
\begin{equation*}
Z^{\prime }(x):=\sum_{n=1}^{N}\sum_{\substack{ \epsilon _{1},...,\epsilon
_{n}\in \left\{ -1,0,1\right\}  \\ \epsilon _{n}\neq 0}}\frac{1+\epsilon _{n}%
}{2}\sigma _{n}\left( \prod_{\epsilon _{j}\neq 0}\frac{i}{4j}\right)
e^{i\left\langle x,\epsilon _{1}a_{1}+...+\epsilon _{n}a_{n}\right\rangle }%
\text{.}
\end{equation*}%
defined on $\mathbb{T}^{d}$. The analogue of (\ref{multrestb}) follows
from the property (P1') instead of (P1). We also observe that, since 
\begin{equation*}
\sum_{\epsilon _{n}\in \left\{ -1,1\right\} }\frac{1+\epsilon _{n}}{2}=1,
\end{equation*}%
we have 
\begin{eqnarray*}
Z^{\prime }(0) &=&\sum_{n=1}^{N}\sum_{\epsilon _{1},...,\epsilon _{n-1}\in
\left\{ -1,0,1\right\} }\left( \sum_{\epsilon _{n}\in \left\{ -1,1\right\} }%
\frac{1+\epsilon _{n}}{2}\right) \frac{i}{4k}\sigma _{k}\left( \prod 
_{\substack{ \epsilon _{j}\neq 0  \\ 1\leq j\leq n-1}}\frac{i}{4j}\right) \\
&=&\sum_{n=1}^{N}\sum_{\substack{ \epsilon _{1},...,\epsilon _{n}\in \left\{
-1,0,1\right\}  \\ \epsilon _{n}\neq 0}}\sigma _{k}\left( \prod_{\epsilon
_{j}\neq 0}\frac{i}{4j}\right) ,
\end{eqnarray*}%
which is the same expression as the one for $Z(0)$ in (\ref{multZ0b}). It
follows that we can use the same argument as in (\ref{multZb}). This
concludes the proof of Lemma \ref{lem.IIs'} by the same arguments as in the
proof of Lemma \ref{lem.IIs}.\hfill $\square $

\bigskip

\subsubsection{Proof of Lemma \protect\ref{lem.W1} and its analogue on the
torus}

In what follows we introduce an averaged version of the initial Fourier
symbol. It turns out that the averaged version is a well-behaved function
far away from the origin.

\begin{lemma}
\label{lem.Lip} If $T_{k}\in M(W^{l,1}(\mathbb{R}^{d}))$, then for any fixed 
$\varepsilon \in (0,1/2)$ the function defined by 
\begin{equation}
m_{\varepsilon }(\xi ):=\strokedint_{B(\xi ,\varepsilon |\xi |)}\widehat{k}%
(\eta )d\eta ,  \label{Lip-00}
\end{equation}%
for $\xi \in \mathbb{R}^{d}$, is the symbol of a bounded Fourier multiplier
on $W^{l,1}(\mathbb{R}^{d})$. Moreover, we have 
\begin{equation}
|m_{\varepsilon }(\xi _{1})-m_{\varepsilon }(\xi _{2})|\leq C\frac{|\xi
_{1}-\xi _{2}|}{|\xi _{1}|},  \label{Lip-0}
\end{equation}%
for any $\xi _{1},\xi _{2}\in \mathbb{R}^{d}$ with $|\xi _{1}|\geq 1$ and $%
|\xi _{1}-\xi _{2}|\leq \varepsilon |\xi _{1}|/2$, where $C$ is a constant
depending on $k$ and $\varepsilon $.
\end{lemma}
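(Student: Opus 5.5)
The plan is to write $m_\varepsilon$ as a superposition of modulated and dilated copies of $\widehat{k}$, and to get \eqref{Lip-0} from the scale invariance of this averaging. The starting point is the change of variables $\eta=\xi+\varepsilon|\xi|\zeta$, which gives
\begin{equation*}
m_{\varepsilon }(\xi )=\strokedint_{B(0,1)}\widehat{k}(\xi +\varepsilon |\xi |\zeta )d\zeta .
\end{equation*}
The factor $|\xi|$ is not linear in $\xi$, so I would not treat $\xi\mapsto \widehat{k}(\xi+\varepsilon|\xi|\zeta)$ directly as a modulation. Instead I would use the boundedness on $W^{l,1}$ of the dilations $\widehat{k}(t\cdot D)$, $t\geq 1$, in the spirit of (\ref{1-bound}). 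For $t\le 1$ I would use the inhomogeneous estimate with constants depending on $t$. Low frequencies should cause no trouble, since $\widehat{k}\in C_b$ by Proposition \ref{prop.b-inhom}.

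For the multiplier statement I would localize dyadically. Take a Littlewood--Paley partition $\sum_j\psi_j(\xi)=1$ with $\psi_j$ supported where $|\xi|\sim 2^j$. On each annulus, $m_\varepsilon\psi_j$ should be expressible as an average over $\zeta\in B(0,1)$ and $s\in[\tfrac12,2]$ of $\widehat{k}(\xi+2^j s\varepsilon \zeta)$ times a smooth bump adapted to the annulus. This uses the identity $\widehat{k}(D+\tau)f=e_{-\tau}\widehat{k}(D)(e_\tau f)$, which holds for any $\tau$. Modulation by $e_\tau$ multiplies the $W^{l,1}$ norm by a factor up to $\langle\tau\rangle^l$. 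That loss is compensated because the function being acted on has frequency $\sim 2^j\gtrsim|\tau|$, so the $l$ derivatives produce comparable factors. This compensation requires a Bernstein-type argument for the homogeneous part, together with Lemma \ref{lem.Bon-Poo}-style control.

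Summing over $j$ is the main obstacle, because $W^{l,1}$ has no Littlewood--Paley square-function characterization. My first attempt would be to avoid it by choosing instead a smooth weight $\phi(\zeta)$ and the representation
\begin{equation*}
\widetilde m(\xi)=\int \widehat{k}(\xi-\eta)\,\varepsilon^{-d}|\xi|^{-d}\phi(\eta/(\varepsilon|\xi|))d\eta .
\end{equation*}
On the kernel side, this representation corresponds to multiplying $k$ by a smooth, $\xi$-dependent factor. I would then check, via Proposition \ref{prop.g}-type arguments adapted to $W^{l,1}$ by duality, that the resulting operator is bounded. Finally I would pass from $\phi$ to the indicator by approximation, relying on the bound for the averaging that is uniform in $\phi$.

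For the Lipschitz estimate \eqref{Lip-0}, take $|\xi_1|\ge 1$ and $|\xi_1-\xi_2|\le\varepsilon|\xi_1|/2$. The balls $B(\xi_1,\varepsilon|\xi_1|)$ and $B(\xi_2,\varepsilon|\xi_2|)$ have radii comparable to $\varepsilon|\xi_1|$. Their centers differ by $|\xi_1-\xi_2|$ and their radii by at most $\varepsilon|\xi_1-\xi_2|$. Hence the symmetric difference has measure $\lesssim (\varepsilon|\xi_1|)^{d-1}|\xi_1-\xi_2|$, and the normalizing volumes differ by a relative amount $\lesssim|\xi_1-\xi_2|/|\xi_1|$. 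Since $\|\widehat{k}\|_{L^\infty}<\infty$ by Proposition \ref{prop.b-inhom}, it follows that $|m_\varepsilon(\xi_1)-m_\varepsilon(\xi_2)|\lesssim \|\widehat{k}\|_\infty\,\varepsilon^{-1}|\xi_1-\xi_2|/|\xi_1|$. This gives \eqref{Lip-0} with $C\sim\|\widehat{k}\|_\infty/\varepsilon$; this part is routine.
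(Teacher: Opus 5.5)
Your proof of (\ref{Lip-0}) is correct and is the paper's own argument: you bound the symmetric difference of the two balls and the ratio of their volumes, and use $\widehat{k}\in L^\infty$ from Proposition \ref{prop.b-inhom}, which gives $C\sim\|\widehat{k}\|_{L^\infty}/\varepsilon$.

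The gap is in the main assertion, that $m_\varepsilon(D)$ is bounded on $W^{l,1}(\mathbb{R}^d)$. Neither route you sketch produces a bound.

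\textbf{The dyadic route.} Bernstein can offset the loss $\langle\tau\rangle^{l}$ from the modulations $e_{\pm\tau}$ only one annulus at a time. Summing the pieces by the triangle inequality would then require $\sum_j\|\nabla^l P_jf\|_{L^1}\lesssim\|f\|_{W^{l,1}}$. This is false, since the left side is a $\dot B^{l}_{1,1}$ norm. There is a second problem: an average of $\widehat{k}(\xi+2^js\varepsilon\zeta)$ over $\zeta\in B(0,1)$, $s\in[\frac12,2]$ with a $\xi$-independent weight is a convolution of $\widehat{k}$ with a fixed measure. It is not $m_\varepsilon(\xi)$ on the annulus, because the radius $\varepsilon|\xi|$ varies there.

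\textbf{The smooth-weight route.} Averaging $\widehat{k}$ at the $\xi$-dependent scale $\varepsilon|\xi|$ does not correspond to multiplying $k$ by any fixed function. Also, Proposition \ref{prop.g} characterizes $M(C_0^l)$, not $M(W^{l,1})$. The paper stresses, via Proposition \ref{prop.Bon-Moh} and the remark before Proposition \ref{Bon-Moh-trans}, that no such description is available for $W^{l,1}$.

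\textbf{The missing idea.} The map $\xi\mapsto\xi+\varepsilon|\xi|\zeta$ is not a translation, but after averaging it becomes a linear change of variables. The space $W^{l,1}$ is invariant, up to constants, under linear changes of variables with bounded distortion. The paper writes
\[
m_\varepsilon(\xi)=\frac{1}{|B(\mathbf{e}_1,\varepsilon)|}\int_{B(\mathbf{e}_1,\varepsilon)}\widehat{k}(L_y\xi)\,dy,
\]
where $L_y=|y|R_y$ is a rotation followed by a dilation with $|y|\in[\frac12,\frac32]$. One has $(\widehat{k}\circ L)(D)f=\bigl(\widehat{k}(D)(f\circ L^{T})\bigr)\circ L^{-T}$. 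Hence each $(\widehat{k}\circ L)(D)$ has $M(W^{l,1})$-norm $\lesssim_{l,d}\|\widehat{k}(D)\|_{M(W^{l,1})}$, uniformly over such $L$. Minkowski's integral inequality then bounds $m_\varepsilon(D)$. No localization, modulation or structure theorem is needed.

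The following variant works verbatim in every dimension and avoids having to choose the rotations $R_y$ consistently when $d\ge 3$. Let $U$ be Haar-distributed on $O(d)$, and let $\rho\in[0,1]$ be independent of $U$ with density proportional to $\rho^{d-1}$. Then $\rho U(\xi/|\xi|)$ is uniformly distributed on $B(0,1)$, so
\[
m_\varepsilon(\xi)=\mathbb{E}\,\widehat{k}\bigl((I+\varepsilon\rho U)\xi\bigr).
\]
Moreover $\|I+\varepsilon\rho U\|\le\frac32$ and $\|(I+\varepsilon\rho U)^{-1}\|\le 2$, so the same change-of-variables bound applies to every matrix in the average.
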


\bigskip

\noindent \textbf{Proof.} For each $y\in \mathbb{R}^{d}\backslash \{0\}$
consider the linear operator $L_{y}$ that is obtained by composing the
direct rotation $R_{y}$, that transforms $\mathbf{e}_{1}$ into $y/|y|$, with
multiplication by $|y|$ (we have $L_{y}:=|y|R_{y}$). In particular, we have $%
R_{y}(\mathbf{e}_{1})=y/|y|$ and $L_{y}(\mathbf{e}_{1})=y$, for any $y\in 
\mathbb{R}^{d}\backslash \{0\}$. By a change of variables we have%
\begin{equation*}
\strokedint_{B(\mathbf{e}_{1},\varepsilon )}(\widehat{k}\circ L_{y})(\xi )dy=%
\strokedint_{B(\xi ,\varepsilon |\xi |)}\widehat{k}(\eta )d\eta ,
\end{equation*}%
i.e., 
\begin{equation}
m_{\varepsilon }=\strokedint_{Q(\mathbf{e}_{1},\varepsilon )}(\widehat{k}%
\circ L_{y})dy.  \label{Lip-1}
\end{equation}

Note that for any $y\in B(\mathbf{e}_{1},\varepsilon )$ we have $1/2\leq
|y|\leq 3/2$ and hence 
\begin{equation*}
\left\Vert (\widehat{k}\circ L_{y})(D)\right\Vert _{M(W^{l,1}(\mathbb{R}%
^{d}))}=\left\Vert (\widehat{k}\circ |y|R_{y})(D)\right\Vert _{M(W^{l,1}(%
\mathbb{R}^{d}))}\lesssim _{l,d}\left\Vert \widehat{k}(D)\right\Vert
_{M(W^{l,1}(\mathbb{R}^{d}))}.
\end{equation*}

Using this together with (\ref{Lip-1}) one can write 
\begin{equation*}
\left\Vert m_{\varepsilon }(D)\right\Vert _{M(W^{l,1}(\mathbb{R}^{d}))}\leq %
\strokedint_{B(\mathbf{e}_{1},\varepsilon )}\left\Vert (\widehat{k}\circ
L_{y})(D)\right\Vert _{M(W^{l,1}(\mathbb{R}^{d}))}dy\lesssim
_{l,d}\left\Vert \widehat{k}(D)\right\Vert _{M(W^{l,1}(\mathbb{R}^{d}))}.
\end{equation*}

This shows that $m_{\varepsilon }(D)$ is a bounded Fourier multiplier on $%
W^{l,1}(\mathbb{R}^{d})$.

The estimate (\ref{Lip-0}) can be deduced directly from (\ref{Lip-00}).
Indeed, for $\xi _{1},\xi _{2}$ as in the statement we have (with $%
B_{j}:=B(\xi _{j},\varepsilon |\xi _{j}|)$, $j=1,2$) 
\begin{eqnarray*}
|m_{\varepsilon }(\xi _{1})-m_{\varepsilon }(\xi _{2})| &\leq &\frac{1}{%
|B_{1}|}\left\vert \int_{B_{1}}\widehat{k}(\eta )d\eta -\int_{B_{2}}\widehat{%
k}(\eta )d\eta ,\right\vert +\left\vert \frac{1}{|B_{1}|}-\frac{1}{|B_{2}|}%
\right\vert \left\vert \int_{B_{2}}\widehat{k}(\eta )d\eta ,\right\vert \\
&\leq &\left\Vert \widehat{k}\right\Vert _{L^{\infty }}\left( \frac{%
|B_{1}\Delta B_{2}|}{|B_{1}|}+\left\vert \frac{|B_{2}|}{|B_{1}|}%
-1\right\vert \right) \\
&\lesssim &\left\Vert \widehat{k}\right\Vert _{L^{\infty }}\left( \frac{|\xi
_{1}-\xi _{2}|(\varepsilon |\xi _{1}|)^{d-1}}{(\varepsilon |\xi _{1}|)^{d}}%
+\left\vert \frac{(\varepsilon |\xi _{2}|)^{d}}{(\varepsilon |\xi _{1}|)^{d}}%
-1\right\vert \right) ,
\end{eqnarray*}%
and (\ref{Lip-0}) follows by standard computations. (Recall that by
Proposition \ref{prop.b-inhom} the function $\widehat{k}$ is bounded. One
can set $C\lesssim _{d}\left\Vert \widehat{k}\right\Vert _{L^{\infty
}}/\varepsilon $.) \hfill $\square $

\bigskip

\begin{lemma}
\label{lem.mean1} Let $k$ be a tempered distribution on $\mathbb{R}^{d}$
such that $T_{k}\in M(W^{l,1}(\mathbb{R}^{d}))$ and let $m_{\varepsilon }$
be the function defined by (\ref{Lip-00}), where $\varepsilon \in (0,1/2)$.
Then, for each $\xi \in \mathbb{R}^{d}\backslash \{0\}$ the limit $%
\lim_{t\rightarrow \infty }m_{\varepsilon }(t\xi )$ exists and is finite.
\end{lemma}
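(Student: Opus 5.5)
We argue by contradiction. Fix $\xi\neq 0$; after rescaling we may take $|\xi|=1$ and write $\omega_0:=\xi$. By Proposition \ref{prop.b-inhom} the symbol $\widehat{k}$ is bounded, so $m_{\varepsilon}$ is bounded and continuous on $\mathbb{R}^d\backslash\{0\}$. By Lemma \ref{lem.Lip}, $m_\varepsilon(D)\in M(W^{l,1}(\mathbb{R}^d))$, and $m_\varepsilon$ satisfies the scale-invariant Lipschitz estimate (\ref{Lip-0}). Suppose $\lim_{t\to\infty}m_\varepsilon(t\omega_0)$ does not exist. Then $m:=m_\varepsilon$ falls into the dichotomy above: either the symmetric case (\ref{IIs}) or the asymmetric case (\ref{IIa}) holds, with values $b_1\neq b_2$ and a sequence $t_n\to\infty$. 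Our aim is to produce, for each $N$, sequences $(\sigma_n)$ and $(a_n)$ satisfying (\ref{numar1mult}), (P2)--(P4), and (P1) or (P1') respectively. These would be built for an affine renormalisation $\tilde m:=(m-b_1)/(b_2-b_1)$, which is still a bounded $W^{l,1}$ multiplier modulo adding a multiple of the identity. Lemma \ref{lem.IIs} or Lemma \ref{lem.IIs'} would then say that $\tilde m(D)$ is not bounded from $W^{l,1}$ to $\dot W^{l,1}$. This contradicts the boundedness of $m_\varepsilon(D)$ on $W^{l,1}$, since the $\dot W^{l,1}$ seminorm is dominated by the $W^{l,1}$ norm.

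The construction is inductive. We fix $\sigma_1,\dots,\sigma_N$ from (\ref{numar1mult}) and choose $a_1,a_2,\dots$ successively. Without loss of generality, by rotating coordinates (multiplier norms are rotation invariant up to constants), $\omega_0$ is close to $\mathbf{e}_1$. Having chosen $a_1,\dots,a_{n-1}$, we pick an index $t$ from the appropriate subsequence: in the symmetric case $t=t_{2p}$ if $\sigma_n=0$ and $t=t_{2p+1}$ if $\sigma_n=1$; in the asymmetric case any $t_p$ works. We then take $a_n\in\mathbb{Z}^d$ to be a lattice point within distance $\sqrt d$ of $t\omega_0$, with $t$ huge. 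All the points $\epsilon_na_n+\sum_{j<n}\epsilon_ja_j$ lie within distance $S_{n-1}+\sqrt d$ of $\pm t\omega_0$, where $S_{n-1}:=\sum_{j<n}|a_j|$ is fixed. By (\ref{Lip-0}) with $\xi_1=\pm t\omega_0$, the value of $\tilde m$ there differs from $\tilde m(\pm t\omega_0)$ by at most $C(S_{n-1}+\sqrt d)/t$. Taking $p$ large therefore gives (P1) in the symmetric case, where both signs are used, and (P1') in the asymmetric case, where $\tilde m(-t_p\omega_0)\to 0$ and $\tilde m(t_p\omega_0)\to1$. This is exactly where the averaging is crucial: $\widehat k$ itself need not be continuous uniformly at scale $O(1)$, but $m_\varepsilon$ is.

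Conditions (P2) and (P3) follow by taking $t$ large, since the first coordinate of $a_n$ is about $t\,\omega_0(1)$, which is much larger than $S_{n-1}$. Condition (P4) is the main obstacle. It requires the transverse coordinates $a_n(s)$, for $s\ge2$, plus the accumulated sums, to be much smaller in $l$-th power than $|a_n(1)|^l$ up to the factor $4^{-N}$. With $\omega_0$ merely close to $\mathbf{e}_1$, the ratio $|a_n(s)|/|a_n(1)|$ tends to $|\omega_0(s)|/|\omega_0(1)|$, not to $0$. So the rotation must make $\omega_0=\mathbf{e}_1$ exactly. The lattice is not rotation invariant, but we only need a genuine Euclidean multiplier: we replace $m$ by $m\circ R$, which is again a $W^{l,1}$ multiplier with the same Lipschitz property, and work along $\mathbf{e}_1$. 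Then $a_n=(\lfloor t\rfloor,0,\dots,0)$ plus a bounded correction, so $|a_n(s)+\sum_{j<n}\epsilon_ja_j(s)|\le S_{n-1}$. The ratio in (P4) is then $\lesssim (1+S_{n-1}^l)/t^l$, which is below $4^{-N}$ for $t$ large. We also need to check that the lemmas need only $m\in C_b(\mathbb{R}^d\backslash\{0\})$ evaluated on $\mathbb{Z}^d$, which holds here. Finally, the limit is finite because $|m_\varepsilon|\le\|\widehat k\|_{L^\infty}$.
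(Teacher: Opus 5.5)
Your overall route is the paper's: normalise affinely, rotate so that $\omega_0=\mathbf{e}_1$ exactly, take integer frequencies $a_n$ on the $\mathbf{e}_1$-axis along the relevant subsequence, and use (\ref{Lip-0}) to carry the limiting values from $\pm t\mathbf{e}_1$ to the whole cloud $\epsilon_na_n+\sum_{j<n}\epsilon_ja_j$. In the symmetric case this works as you describe.

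The asymmetric case does not go through. For an index with $\sigma_n=0$, (P1') requires $\tilde m\approx 0$ at \emph{both} points $\pm a_n+\sum_{j<n}\epsilon_ja_j$. But (\ref{IIa}) only supplies the single pattern $\tilde m(-t_p\mathbf{e}_1)\to 0$, $\tilde m(t_p\mathbf{e}_1)\to 1$. So ``any $t_p$ works'' gives (P1') only when all $\sigma_n=1$, and that case is useless. At $x=0$ the test function then reduces essentially to $\sum_{n\le N}\frac{i}{4n}\prod_{j<n}(1+\frac{i}{2j})$. This stays bounded, because the phases of the products grow like $\frac12\ln n$.

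The point is that evaluation at $0$ only sees $\tilde m(a_n)+\tilde m(-a_n)$. The selection (\ref{numar1mult}) therefore needs two cluster patterns with different values of this sum. In the asymmetric case there may be none: take $m(\pm t\mathbf{e}_1)\approx \pm e^{i\ln t}$, where the sum is always $0$.

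The paper's own write-up of this case has the same defect. Its $t_{2n}$ are only a relabelling of the (\ref{IIa}) sequence, so $m_\varepsilon(t_{2n}\mathbf{e}_1)\to 1$. Hence (\ref{mean1-1'}) does not give (P1') when $\sigma_n=0$.

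A simple repair is to drop the $\sigma_n$'s in the asymmetric case and evaluate at a different point.
\begin{itemize}
\item Choose all $a_n=a_n(1)\mathbf{e}_1$ along the (\ref{IIa}) sequence with $a_n(1)\equiv 1 \pmod{4}$. This is an $O(1)$ shift, harmless by (\ref{Lip-0}).
\item Then, up to $4^{-N}$, $\tilde m\approx 0$ at $-a_n+\sum_{j<n}\epsilon_ja_j$ and $\tilde m\approx 1$ at $a_n+\sum_{j<n}\epsilon_ja_j$. This is (P1') with all $\sigma_n=1$.
\item Put $x_0:=\frac{\pi}{2}\mathbf{e}_1$. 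Since $e^{i\langle x_0,\epsilon_ja_j\rangle}=\epsilon_j i$, the sum over $\epsilon_1,\dots,\epsilon_{n-1}$ in (\ref{multTmRb}) factorises as $\prod_{j<n}\bigl(1+\frac{i}{4j}(i-i)\bigr)=1$.
\item The $n$-th step contributes $\frac{i}{4n}(i\cdot 1-i\cdot 0)=-\frac{1}{4n}$. Hence $|\tilde m(D)(R_N\ast G_N)(x_0)|\geq \frac14\sum_{n\le N}\frac1n-3^N4^{-N}\gtrsim \ln N$.
\item The right-hand side of (\ref{m-alfa-2}) is bounded in $L^\infty$ exactly as in the proof of Lemma \ref{lem.IIs}, independently of the evaluation point.
\end{itemize}
This rules out any off-diagonal cluster value and, together with your symmetric case, completes the proof.
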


\noindent \textbf{Proof. }Note that by Lemma \ref{lem.Lip} the function $%
m_{\varepsilon }$ is bounded and hence, if $\lim_{t\rightarrow \infty
}m_{\varepsilon }(t\xi )$ exists, then it is a real number. Therefore, it
remains to show the existence of $\lim_{t\rightarrow \infty }m_{\varepsilon
}(t\xi )$ for all nonzero $\xi $. Suppose by contradiction that there exists
some $\omega _{0}\in \mathbb{S}^{d-1}$ such that the limit $%
\lim_{t\rightarrow \infty }m_{\varepsilon }(t\omega _{0})$ does not exists.
The symmetric and the asymmetric cases will be treated separately. Fix a
large integer $N$ and a sequence $\left( \sigma _{k}\right) _{1\leq k\leq N}$
, in $\left\{ 0,1\right\} $, satisfying the inequality (\ref{numar1mult}).
\bigskip

\textbf{The symmetric case} \medskip

Suppose we have (\ref{IIs}) for $m=m_{\varepsilon }$. By composing $%
m_{\varepsilon }$ with rotations one can suppose without loss of generality
that $\omega _{0}=\mathbf{e}_{1}$. Also, by considering affine
transformations of $m_{\varepsilon }$ ($m_{\varepsilon }\rightarrow a\cdot
m_{\varepsilon }+b$, where $a$, $b$ are constants) we can assume that there
exists a sequence $(t_{n})_{n\geq 1}$, with $t_{n}\rightarrow \infty $, such
that 
\begin{equation*}
\lim_{n\rightarrow \infty }m_{\varepsilon }(\epsilon t_{2n}\mathbf{e}_{1})=0%
\text{\ \ \ \ \ and \ }\ \lim_{n\rightarrow \infty }m_{\varepsilon
}(\epsilon t_{2n+1}\mathbf{e}_{1})=1,
\end{equation*}%
for any $\epsilon \in \left\{ -1,1\right\} $.

By selecting a subsequnce if neccessary, one can suppose that the sequence $%
(t_{n})_{n\geq 1}$ is such that 
\begin{equation}
|m_{\varepsilon }(\epsilon t_{2n}\mathbf{e}_{1})|<\frac{1}{2\cdot 4^{N}}%
\text{\ \ \ \ \ and \ }\ |m_{\varepsilon }(\epsilon t_{2n+1}\mathbf{e}%
_{1})-1|<\frac{1}{2\cdot 4^{N}},  \label{mean1-1}
\end{equation}%
and, moreover, we have $t_{n+1}>4t_{n}>4^{N+1},$ 
\begin{equation}
\frac{\left\vert \sum_{1\leq j\leq n-1}\epsilon _{j}t_{j}\right\vert }{%
\left\vert t_{n}\right\vert }<\min (\frac{1}{C},\frac{\varepsilon }{2})\frac{%
1}{2\cdot 4^{N}}\text{,}  \label{mean1-2}
\end{equation}%
for all $\epsilon _{1},...,\epsilon _{n-1}\in \left\{ -1,0,1\right\} $, for
any $n\geq 1$, where $C$ is the constant in (\ref{Lip-0}). Also, by (\ref%
{Lip-0}) one can suppose that all the numbers $t_{n}$ are integers.

Now, we define the sequence of frequencies $(a_{n})_{1\leq n\leq N}$ as
follows. If $\sigma _{n}=0$, we set $a_{n}:=t_{2n}\mathbf{e}_{1}$ and if $%
\sigma _{n}=1$, we set $a_{n}:=t_{2n+1}\mathbf{e}_{1}$. One can check
directly that the conditions (P2), (P3), (P4) are satisfied. Fix $n\in
\{1,..,N\}$. For the vectors $\xi _{1}:=\epsilon _{n}a_{n}$ and $\xi
_{2}:=\epsilon _{n}a_{n}+\sum_{1\leq j\leq n-1}\epsilon _{j}a_{j}$, for some
fixed $\epsilon _{1},...,\epsilon _{n-1}\in \left\{ -1,0,1\right\} $, $%
\epsilon _{n}\in \left\{ -1,1\right\} $, we have $|\xi _{1}|>4^{N}>1$ and by
(\ref{mean1-2}), $|\xi _{1}-\xi _{2}|\leq \varepsilon |\xi _{1}|/2$. Hence,
by applying (\ref{Lip-0}) (with $\xi _{1}$, $\xi _{2}$ as above) (\ref%
{mean1-1}) and (\ref{mean1-2}) we obtain 
\begin{eqnarray*}
\left\vert m\left( \epsilon _{n}a_{n}+\sum_{1\leq j\leq n-1}\epsilon
_{j}a_{j}\right) -\sigma _{n}\right\vert &\leq &\left\vert m\left( \epsilon
_{n}a_{n}\right) -\sigma _{n}\right\vert +\left\vert m\left( \epsilon
_{n}a_{n}\right) -m\left( \epsilon _{n}a_{n}+\sum_{1\leq j\leq n-1}\epsilon
_{j}a_{j}\right) \right\vert \\
&<&\frac{1}{2\cdot 4^{N}}+C\min (\frac{1}{C},\frac{\varepsilon }{2})\frac{1}{%
2\cdot 4^{N}}\leq \frac{1}{4^{N}},
\end{eqnarray*}%
which gives (P1). By applying Lemma \ref{lem.IIs} we obtain Lemma \ref%
{lem.mean1} in the symmetric case. \bigskip

\textbf{The asymmetric case} \medskip

Suppose we have (\ref{IIa}) for $m=m_{\varepsilon }$. The argument is
similar to the one in the symmetric case. Again, by composing $%
m_{\varepsilon }$ with rotations, one can suppose without loss of generality
that $\omega _{0}=\mathbf{e}_{1}$. Also, by considering affine
transformations of $m_{\varepsilon }$ we can assume that there exists a
sequence $(t_{n})_{n\geq 1}$, with $t_{n}\rightarrow \infty $, such that 
\begin{equation*}
\lim_{n\rightarrow \infty }m_{\varepsilon }(-t_{2n}\mathbf{e}_{1})=0\text{ \
\ and \ \ }\lim_{n\rightarrow \infty }m_{\varepsilon }(\epsilon t_{2n+1}%
\mathbf{e}_{1})=\frac{1+\epsilon }{2},
\end{equation*}%
for any $\epsilon \in \{-1,1\}$. (This can be obtained by relabeling the
terms of $(t_{n})_{n\geq 1}$ in (\ref{IIa}).)

By selecting a subsequence if necessary, one can suppose that the sequence $%
(t_{n})_{n\geq 1}$ is such that 
\begin{equation}
\left\vert m_{\varepsilon }(-t_{2n}\mathbf{e}_{1})\right\vert <\frac{1}{%
2\cdot 4^{N}}\text{ \ \ and \ }\ \left\vert m_{\varepsilon }(\epsilon
t_{2n+1}\mathbf{e}_{1})-\frac{1+\epsilon }{2}\right\vert <\frac{1}{2\cdot
4^{N}},  \label{mean1-1'}
\end{equation}%
for any $\epsilon \in \{-1,1\}$ and any $n\geq 1$. Moreover, we can assume
that $t_{n+1}>4t_{n}>4^{N+1}$, together with the estimate (\ref{mean1-2}),
for any $n\geq 1$, and the fact that all the numbers $t_{n}$ are integers.

We define the frequencies $(a_{n})_{1\leq n\leq N}$ as in the symmetric
case. The conditions (P2), (P3), (P4) are again directly checked. Fix $n\in
\{1,..,N\}$ and $\epsilon _{1},...,\epsilon _{n-1}\in \left\{ -1,0,1\right\} 
$, $\epsilon _{n}\in \left\{ -1,1\right\} $. For the vectors $\xi
_{1}:=\epsilon _{n}a_{n}$ and $\xi _{2}:=\epsilon _{n}a_{n}+\sum_{1\leq
j\leq n-1}\epsilon _{j}a_{j}$, we have $|\xi _{1}|>4^{N}>1$ and by (\ref%
{mean1-2}), $|\xi _{1}-\xi _{2}|\leq \varepsilon |\xi _{1}|/2$. By applying (%
\ref{Lip-0}) (with $\xi _{1}$, $\xi _{2}$ as above), (\ref{mean1-1'}) and (%
\ref{mean1-2}) we obtain (P1'). An application of Lemma \ref{lem.IIs'} gives
us Lemma \ref{lem.mean1} in the asymmetric case. \hfill $\square $

\bigskip

\noindent \textbf{Proof of Lemma \ref{lem.W1}.} As in the proof of Lemma \ref%
{lem.mean} the fact that the directional limit does not depend on the
direction is based on an argument involving homogeneous symbols. More
precisely, thanks to Lemma \ref{lem.mean1} the function 
\begin{equation}
m_{\varepsilon }^{\infty }(\xi ):=\lim_{t\rightarrow \infty }m_{\varepsilon
}(t\xi )=\lim_{t\rightarrow \infty }\strokedint_{B(t\xi ,\varepsilon t|\xi
|)}\widehat{k}(\eta )d\eta ,  \label{media}
\end{equation}%
is well defined and $0-$homogeneous on $\mathbb{R}^{d}$. Also, since $%
m_{\varepsilon }(D)\in M(W^{l,1}(\mathbb{R}^{d}))$ (see Lemma \ref{lem.Lip}%
), by a standard dilation argument (using the $0-$homogeneity of $%
m_{\varepsilon }^{\infty }$) we get $m_{\varepsilon }^{\infty }(D)$ is a
bounded Fourier multiplier on the homogeneous space $\dot{W}^{l,1}(\mathbb{R}%
^{d})$. Now the Bonami-Poornima result (see Lemma \ref{lem.Bon-Poo}) implies
that $m_{\varepsilon }^{\infty }$ is a constant function.\hfill $\square $

\begin{remark}
\label{rem.1-2}By the methods used in the proof of Lemma \ref{lem.W1} one
can prove an analogue of Lemma \ref{lem.W} (in the case $T_{k}\in
M(C_{0}^{l}(\mathbb{R}^{d}))$) for the linear growth functions $%
r_{\varepsilon }$. In this case, instead of proceding by duality, we
directly study the boundedness in $W^{l,\infty }(\mathbb{R}^{d})$ of the
expressions $m(D)h_{N}$, where formally $h_{N}:=\partial _{1}^{-l}R_{N}$
(see (\ref{Riesz})). By the methods in \cite[Section 5]{CE-F} we obtain the
analogues of Lemmas \ref{lem.IIs} and \ref{lem.IIs'}. Further we use the
same arguments as in the proof Lemma \ref{lem.W1} with Lemma \ref%
{lem.Bon-Poo} replaced by Lemma \ref{lem.jfa}. Note that, as in Proposition %
\ref{prop.discr}, for any $k$ with $T_{k}\in M(C_{0}^{l}(\mathbb{R}^{d}))$
we obtain a unique decomposition $k=k_{d}+k^{\prime }$, where $k_{d}$ is a
finite discrete measure and $k^{\prime }$ is a pseudomeasure with 
\begin{equation*}
\lim_{t\rightarrow \infty }\strokedint_{B(t\xi ,\varepsilon t|\xi |)}|%
\widehat{k^{\prime }}(\eta )|^{2}d\eta =0,
\end{equation*}%
for any $\xi \in \mathbb{R}^{d}\backslash \{0\}$. As in Remark \ref%
{rem.discr}, for any decomposition (\ref{dec-miu}) the discrete part of the
\textquotedblleft main term\textquotedblright\ $\nu _{\alpha ,\alpha }$ is $%
k_{d}$, for any $\alpha $ with $|\alpha |=l$. This shows in particular that $%
k_{d}$ and $k^{\prime }$ above are the same regardless of the fact that we
work with linear or sublinear growth functions.
\end{remark}

\begin{remark}
\label{rem.1-3}It is not known at this moment whether one have a
decomposition similar to the one in Proposition \ref{prop.discr} in the case
of the kernels $k$ with $T_{k}\in M(W^{l,1}(\mathbb{R}^{d}))$.
\end{remark}

We can now deal with the corresponding Wiener property on the torus. We use
the same approach as in Section \ref{sec.inf}, namely from a given kernel $K$
on $\mathbb{T}^{d}$ we pass to its Euclidean version $K_{e}:=\widetilde{K}%
\Phi $, where $\widetilde{K}$ and $\Phi $ are as in Lemma \ref{lem.tor-R}.
However, since for the multipliers on $W^{l,1}(\mathbb{R}^{d})$ or $W^{l,1}(%
\mathbb{T}^{d})$ we do not have direct descriptions as in Proposition \ref%
{prop.g}, we can not transfer Lemma \ref{lem.W1} to the torus by using Lemma %
\ref{lem.v-psi}. We use instead the following slight generalisation of
Theorem 3.4 in \cite{Bon-Moh} (see Proposition \ref{Bon-Moh-general} for $%
X=W^{l,1}$):

\begin{proposition}
\label{Bon-Moh-trans}Suppose $K$ is as in Lemma \ref{lem.tor-R} such that $%
T_{K}\in M(W^{l,1}(\mathbb{T}^{d}))$ and $\psi $ is a Schwartz function on $%
\mathbb{R}^{d}$. Then, $T_{\widetilde{K}\psi }\in M(W^{l,1}(\mathbb{R}^{d}))$%
. In particular, with the notation of Lemma \ref{lem.tor-R} ($K_{e}:=%
\widetilde{K}\Phi $), we have $T_{K_{e}}\in M(W^{l,1}(\mathbb{R}^{d}))$.
\end{proposition}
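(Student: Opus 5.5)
We follow the transference scheme of Bonami and Mohanty: write the Euclidean operator $T_{\widetilde K\psi}$ as an average of periodic operators, each a modulated copy of $T_K$ transplanted to $\mathbb{R}^d$. Since $\widehat{\widetilde K\psi}(\xi)=\sum_{\chi\in\mathbb{Z}^d}\widehat K(\chi)\widehat\psi(\xi-\chi)$ (up to normalisation, cf.\ (\ref{tR-1})), for a Schwartz $f$ on $\mathbb{R}^d$ we get formally
\begin{equation*}
T_{\widetilde K\psi}f(x)=\int_{\mathbb{R}^{d}}\psi(y)\,\big(\widetilde K\ast_{\mathbb{R}^{d}}(f(\cdot)e^{-i\langle \cdot ,y\rangle})\big)(x)\,e^{i\langle x,y\rangle}\,dy ,
\end{equation*}
where $\widetilde K\ast_{\mathbb{R}^d}$ means the periodic kernel acting on functions on $\mathbb{R}^d$. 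Equivalently, $T_{\widetilde K\psi}=\int\psi(y)\,M_{y}\,\widetilde T_K\,M_{-y}\,dy$ with $M_y$ the modulation by $e^{i\langle\cdot,y\rangle}$ and $\widetilde T_K$ the operator with symbol $\widehat K(\chi)$ on the shifted lattice. We verify this identity first on the Fourier side for Schwartz $f$; it is routine.

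Next we need that $\widetilde T_K$ is bounded on $W^{l,1}(\mathbb{R}^d)$ with norm $\lesssim\|T_K\|_{M(W^{l,1}(\mathbb{T}^d))}$. We would use the standard periodisation transference: write $\mathbb{R}^d=\bigcup_{n}(2\pi n+[0,2\pi)^d)$ and, for fixed $x'$, consider $F_{x'}(z)=\sum_n f(z+2\pi n)e^{-i\langle z+2\pi n,y\rangle}$ (a function on $\mathbb{T}^d$ after twisting). More robustly, we follow the classical proof that $M(L^p(\mathbb{T}^d))$ multipliers transfer to $\mathbb{R}^d$: for $f\in C_c^\infty$, dilate by $R$ large and periodise, $f_R(x)=\sum_n f(x+2\pi Rn)$. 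Then apply $T_K$ on the torus of size $R$ (a rescaling changes the $W^{l,1}$ norm only by factors $R^{-j}$ on lower order terms, which is harmless) and let $R\to\infty$. For $p=1$ the key is that the estimate passes to the limit by the weak-$*$ lower semicontinuity of the $L^1$ (measure) norm of $\nabla^\alpha$, which is where we would need to be careful.

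Then we conclude. By Minkowski's integral inequality, and since $M_{\pm y}$ changes $W^{l,1}$ norms by at most a factor $\lesssim\langle y\rangle^{l}$ (Leibniz), we get
\begin{equation*}
\|T_{\widetilde K\psi}f\|_{W^{l,1}}\lesssim\int|\psi(y)|\langle y\rangle^{2l}\,dy\;\|T_K\|_{M(W^{l,1}(\mathbb{T}^d))}\|f\|_{W^{l,1}} .
\end{equation*}
This is finite because $\psi$ is Schwartz. The statement for $K_e=\widetilde K\Phi$ is the special case $\psi=\Phi$.

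The main obstacle is the second step: transferring boundedness from $W^{l,1}(\mathbb{T}^d)$ to the lattice multiplier on $W^{l,1}(\mathbb{R}^d)$ at the endpoint $p=1$. There the kernel is not a measure, so we cannot simply convolve. We would first try the dilated-torus approximation described above, controlling the boundary and overlap errors in $L^1$ by the compact support of $f$, and passing to the limit through duality with $C_0$.
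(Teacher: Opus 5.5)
There is a genuine gap. The step you flagged as the main obstacle is not merely delicate at $p=1$: the claim it is meant to establish is false.

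\textbf{Why the averaging argument fails.} In your representation $T_{\widetilde K\psi}=\int\widehat\psi(y)\,M_y\widetilde T_KM_{-y}\,dy$ (the weight is $\widehat\psi$, not $\psi$, but that slip is harmless), $\widetilde T_K$ is convolution on $\mathbb{R}^d$ with the periodic distribution $\widetilde K$. That is, $\widetilde T_Kf=T_K\big(\sum_{n\in\mathbb{Z}^d}f(\cdot+2\pi n)\big)$.

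Its output is periodic; its Fourier transform is a discrete measure on the lattice. So it is never in $L^1(\mathbb{R}^d)$ unless it vanishes. Already for $K=\delta_0$ it is the periodisation map. Hence $\widetilde T_K$ is not bounded on $W^{l,1}(\mathbb{R}^d)$, and each term $M_y\widetilde T_KM_{-y}f$ has infinite norm. Minkowski's inequality therefore yields nothing: the $y$-integral is integrable only through cancellation in $y$.

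If you meant instead some extension of $\widehat K$ off the lattice as an $\mathbb{R}^d$-symbol, the identity in your first step fails. Moreover, a piecewise constant extension is not a multiplier anyway, by the continuity in Proposition \ref{prop.b-inhom}.

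\textbf{Why the dilated torus does not help.} Rescaling $T_K$ to the torus of side $2\pi R$ produces the multiplier $\widehat K(R\,\cdot)$ on $R^{-1}\mathbb{Z}^d$. De Leeuw-type limits, by contrast, transfer lattice samples of a \emph{fixed continuous} symbol $m$ to $m(D)$. You have neither such a family nor the right symbol: the one you need is the unit-scale smoothing $\sum_\chi\widehat K(\chi)\widehat\psi(\xi-\chi)$, which dilation destroys. Also, contrary to your remark, the rescaling is not harmless for the inhomogeneous norm: the $i$-th order output norm picks up factors $R^{j-i}$ with $j>i$.

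\textbf{The missing idea: localisation in physical space.} This is how the paper argues (Proposition \ref{Bon-Moh-general} with $X=W^{l,1}$).

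Your modulation idea is sound on the torus, where every term is bounded. From $\widehat{K\varphi}=\sum_\chi\widehat K(\cdot-\chi)\widehat\varphi(\chi)$ and $\|e_\chi g\|_{W^{l,1}(\mathbb{T}^d)}\lesssim\langle\chi\rangle^{l}\|g\|_{W^{l,1}(\mathbb{T}^d)}$, one gets
$\|T_{K\varphi}\|_{M(W^{l,1}(\mathbb{T}^d))}\lesssim\sum_\chi\langle\chi\rangle^{2l}|\widehat\varphi(\chi)|\,\|T_K\|_{M(W^{l,1}(\mathbb{T}^d))}$
for smooth $\varphi$ on $\mathbb{T}^d$.

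Next, split $\psi=\sum_n\psi_n$, with each $\psi_n$ supported in a cube much smaller than a period and $\sum_n\sum_\chi\langle\chi\rangle^{2l}|\widehat\psi_n(\chi)|<\infty$ (Lemma \ref{lem.sch}). Split $f$ by a periodic partition of unity into pieces $f_j^\chi$ supported in small cubes.

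Each $(\widetilde K\psi_n)\ast f_j^\chi$ is supported in a cube smaller than a period. After a translation, it therefore coincides with $T_{K\varphi}$ applied to the periodisation of $f_j^\chi$, where $\varphi$ is the periodisation of $\psi_n(\cdot+z)$. It is thus controlled by the bound above.

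For fixed $n,j$ these pieces have pairwise disjoint supports, so the $L^1$-type norms are additive over them. Summing over $n$ and $j$ concludes the proof.

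No limit or duality is needed. The small support of the kernel pieces is exactly what replaces the cancellation your $y$-average would have had to exploit.
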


Now, as in the proof of Lemma \ref{lem.W'}, we can combine Lemma \ref%
{lem.tor-R}, Lemma \ref{lem.W1} and Lemma \ref{lem.tor-R} to obtain the
torus version of Proposition \ref{Bon-Moh-trans}:

\begin{lemma}
\label{lem.W'-1} If $T_{K}\in M(W^{l,1}(\mathbb{T}^{d}))$, then $K$ has the
Wiener property on $\mathbb{T}^{d}$ with respect to the growth functions $%
r_{\varepsilon }(t)=\varepsilon t$, for any $\varepsilon \in (0,1/2)$.
\end{lemma}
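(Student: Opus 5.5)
The plan mirrors the proof of Lemma \ref{lem.W'}, with the $W^{l,\infty}$ ingredients replaced by their $W^{l,1}$ counterparts. Let $T_K\in M(W^{l,1}(\mathbb{T}^d))$. Form the Euclidean kernel $K_e:=\widetilde{K}\Phi$ with $\Phi$ as in Lemma \ref{lem.tor-R}. By Proposition \ref{Bon-Moh-trans} we then have $T_{K_e}\in M(W^{l,1}(\mathbb{R}^d))$. This step replaces the use of Lemma \ref{lem.v-psi}, which is unavailable because multipliers on $W^{l,1}$ have no description of the type given in Proposition \ref{prop.g}.

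Next, Lemma \ref{lem.W1} applied to $k=K_e$ gives the following for each fixed $\varepsilon\in(0,1/2)$. For every $\omega\in\mathbb{S}^{d-1}$ the limit
\begin{equation*}
\lim_{t\rightarrow\infty}\strokedint_{B(t\omega,\varepsilon t)}\widehat{K}_e(\xi)\,d\xi
\end{equation*}
exists and is independent of $\omega$. Here we use that $B(t\omega,\varepsilon t|\omega|)=B(t\omega,r_\varepsilon(t))$ for $|\omega|=1$, so the limit in (\ref{media}) is exactly the Euclidean Wiener limit for $r_\varepsilon(t)=\varepsilon t$.

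Finally, transfer this back to the torus with Lemma \ref{lem.tor-R}, taking $\sigma=\widehat{K}$ and $r=r_\varepsilon$. The hypothesis $\sigma\in\ell^\infty(\mathbb{Z}^d)$ holds by Proposition \ref{prop.b-inhom}, and $r_\varepsilon(t)=\varepsilon t$ is a growth function. Identity (\ref{tR-0}) says that the Euclidean averages of $\widehat{K}_e$ equal $c>0$ times the discrete averages $|B(t\omega,\varepsilon t)|^{-1}\sum_{\chi\in B_d(t\omega,\varepsilon t)}\widehat{K}(\chi)$, up to an $o(1)$ error as $t\to\infty$. Dividing by $c$, the discrete averages converge to $c^{-1}$ times the Euclidean Wiener number, which does not depend on $\omega$. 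This is the Wiener property on $\mathbb{T}^d$ with respect to $r_\varepsilon$.

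The main point to check is the applicability of Lemma \ref{lem.tor-R} when $r$ grows linearly. Its proof replaces $B^t$ by the union of unit cubes centred at the lattice points of $B^t$, and discards the lattice points near the boundary. Both steps cost $O((\varepsilon t)^{d-1})$ against a volume of order $(\varepsilon t)^d$, so the error is $o(1)$ for any growth function with $r(t)\to\infty$, linear growth included. The substantive analytic input is Proposition \ref{Bon-Moh-trans}, the periodic-to-Euclidean transference for $W^{l,1}$, which we take as given.
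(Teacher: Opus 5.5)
Your proposal is correct and follows the paper's argument: Proposition \ref{Bon-Moh-trans} gives $T_{K_e}\in M(W^{l,1}(\mathbb{R}^d))$, Lemma \ref{lem.W1} gives the Euclidean Wiener property for $r_\varepsilon(t)=\varepsilon t$, and Lemma \ref{lem.tor-R} (with $\sigma=\widehat{K}$, bounded by Proposition \ref{prop.b-inhom}) transfers it back to $\mathbb{T}^d$, exactly as in Lemma \ref{lem.W'}. Your check that the boundary error is $O(1/(\varepsilon t))$ under linear growth is a harmless addition, since Lemma \ref{lem.tor-R} is already stated for arbitrary growth functions.
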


\section{Nonexistence of bounded projections onto $A(D)-$free spaces}

\label{sec.proj}

Let $X$ be a Banach space of (scalar) distributions on $\mathbb{T}^{d}$. We
say that $X$ is a \textit{Wiener admissible space }if

\begin{itemize}
\item[(i)] $X$ is translation invariant, i.e., for any $f\in X$ we have $%
f(\cdot +\theta )\in X$, for any $\theta \in \mathbb{T}^{d}$, and 
\begin{equation*}
\left\Vert f(\cdot +\theta )\right\Vert _{X}\lesssim \left\Vert f\right\Vert
_{X},
\end{equation*}%
where the implicit constant does not depend on $\theta $.

\item[(ii)] Any kernel $k$ with $T_{k}\in M(X)$ is a pseudomeasure, i.e., $%
\widehat{k}\in \ell ^{\infty }(\mathbb{Z}^{d})$;

\item[(iii)] There exists a family of growth functions $(r_{\varepsilon
})_{\varepsilon \in (0,1)}$ satisfying (\ref{r-epsilon}) such that any
kernel $k$ with $T_{k}\in M(X)$ has the Wiener property with respect to $%
r_{\varepsilon }$, for any $\varepsilon \in (0,\varepsilon_{0})$ (for some $%
\varepsilon_{0}>0$).
\end{itemize}

For such spaces we have the following result.

\begin{lemma}
\label{lem.X} Let $d,N\geq 2$ be integers and consider a matrix function $A:%
\mathbb{R}^{d}\rightarrow M_{N}(\mathbb{C)}$ satisfying (A1)--(A3) (from the
statement of Theorem \ref{th.A}).Suppose $X$ is a Wiener admissible space on 
$\mathbb{T}^{d}$. Then, $W_{A}(X)$ is not complemented in $X^{N}$.
\end{lemma}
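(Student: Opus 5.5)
Argue by contradiction: assume a bounded projection $P:X^{N}\to W_{A}(X)$ exists. First replace $P$ by a translation invariant projection by averaging over $\mathbb{T}^{d}$. With $\tau_\theta f:=f(\cdot+\theta)$, set
\begin{equation*}
P':=\int_{\mathbb{T}^{d}}\tau_{-\theta}P\tau_{\theta}\,d\theta .
\end{equation*}
Since $A(D)$ commutes with translations, $W_{A}(X)$ is translation invariant. Hence each $\tau_{-\theta}P\tau_{\theta}$ is again a projection onto $W_{A}(X)$, with norm bounded uniformly in $\theta$ by property (i). Therefore $P'$ is a bounded projection onto $W_{A}(X)$ that commutes with translations.

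The one delicate point is making sense of this integral for a general admissible $X$. I would define it Fourier-coefficientwise, by
\begin{equation*}
\widehat{P'f}(\chi)=\int_{\mathbb{T}^{d}}\widehat{\tau_{-\theta}P\tau_{\theta}f}(\chi)\,d\theta .
\end{equation*}
Boundedness then has to be checked by duality/weak$^*$ arguments. For $C^{l}$ and $W^{l,1}$ translations are continuous, so a Bochner integral works. For $W^{l,\infty}$ I would use a weak$^*$ integral, regarding $W^{l,\infty}$ as a dual space. The multiplier arguments below only use test functions and trigonometric polynomials anyway. I expect this technical step to be the main nuisance rather than a conceptual obstacle.

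Translation invariance forces $P'=(T_{k_{ij}})_{i,j\le N}$ to be a matrix of Fourier multipliers, with each $T_{k_{ij}}\in M(X)$ (restrict to one component and project onto another). Its symbol $M(\chi)=(\widehat{k_{ij}}(\chi))$ is bounded by property (ii). Testing on $e_{\chi}v$ with $v\in\mathbb{C}^{N}$ gives two facts:
\begin{itemize}
\item the range lies in $W_{A}(X)$, so $A(\chi)M(\chi)=0$;
\item $P'$ is the identity on $W_{A}(X)$, so $M(\chi)v=v$ for $v\in\operatorname{Ker}A(\chi)$.
\end{itemize}
Thus for $\chi\neq0$ the matrix $M(\chi)$ is a (not necessarily orthogonal) projection onto $\operatorname{Ker}A(\chi)$. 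In particular
\begin{equation*}
\operatorname{tr}M(\chi)=\dim\operatorname{Ker}A(\chi),
\end{equation*}
which is $\geq1$ whenever $\chi/|\chi|$ lies in the open set $U\subset\mathbb{S}^{d-1}$ from (A3).

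Now apply property (iii) entrywise with $r_{\varepsilon}$. For each small $\varepsilon$, the normalized sums of $M$ over $B_{d}(t\omega,r_{\varepsilon}(t))$ converge as $t\to\infty$ to a matrix $M^{\varepsilon}_{\infty}$ independent of $\omega$. The normalization by $|B|$ versus the lattice count is asymptotically irrelevant.

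Two estimates on $M^{\varepsilon}_{\infty}$ follow.
\begin{itemize}
\item Fix $\omega\in U$. For $\varepsilon$ small (and $t$ large), by (\ref{r-epsilon}) all $\chi$ in the ball have directions within $O(\varepsilon)$ of $\omega$, hence in $U$. Averaging the trace gives $\operatorname{tr}M^{\varepsilon}_{\infty}\geq1$.
\item For an arbitrary $\omega\in\mathbb{S}^{d-1}$, use $A(\chi)M(\chi)=0$ and the $0$-homogeneity of $A$ to write
\begin{equation*}
A(\omega)\cdot\operatorname{avg}M=\operatorname{avg}\bigl(A(\omega)-A(\chi/|\chi|)\bigr)M(\chi).
\end{equation*}
By (A1) this is bounded by $\sup|M|$ times the oscillation of $A$ on a cap of angular radius $O(\varepsilon)$. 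Hence $|A(\omega)M^{\varepsilon}_{\infty}|\leq\delta(\varepsilon)\to0$ uniformly in $\omega$.
\end{itemize}

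Finally, the matrices $M^{\varepsilon}_{\infty}$ are bounded, so take a limit point $M_{0}$ as $\varepsilon\to0$. It satisfies $A(\omega)M_{0}=0$ for every $\omega$, so its range lies in $\bigcap_{\omega}\operatorname{Ker}A(\omega)=\{0\}$ by (A2), i.e. $M_{0}=0$. But $\operatorname{tr}M_{0}\geq1$ from the first estimate, a contradiction. Thus no bounded projection exists.
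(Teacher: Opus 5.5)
Your proof is correct, and its skeleton is the paper's. You average $P$ over translations and read off a bounded matrix symbol $\widehat{K}$ with $A\widehat{K}=0$ and $\widehat{K}v=v$ on $\ker A$. You then pass to the Wiener limits $\Gamma_{\varepsilon}$ along $r_{\varepsilon}$, use uniform continuity of $A$ to get a limit $\Gamma$ with $A(\omega)\Gamma=0$ for all $\omega$, and conclude $\Gamma=0$ from (A2).

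The genuine difference is how you contradict (A3). The paper encodes the second algebraic condition as $\widehat{K}(I-A^{+}A)=I-A^{+}A$ in (\ref{alg-3}). To push this identity through the ball averages it needs $A^{+}A$ to be continuous near $\omega$. That is why it shrinks $U$ to an open set of constant rank and appeals to the explicit formula (\ref{alg-ps}), before obtaining (\ref{oo-1}) and then $A^{+}A=I$ on $U$.

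You observe instead that $\widehat{K}(\chi)$ is an idempotent with range $\ker A(\chi)$, so $\operatorname{tr}\widehat{K}(\chi)=\dim\ker A(\chi)\geq 1$ whenever $\chi/|\chi|\in U$. Because the trace is linear, this integer lower bound passes directly to the averages and to the limit matrix. It needs no regularity of $\omega\mapsto\ker A(\omega)$, and it is immediately incompatible with $\Gamma=0$. Your route is shorter and dispenses with both the pseudoinverse and the constant-rank reduction. The paper's route gives the slightly finer information that $\Gamma$ acts as the identity on $\ker A(\omega)$ for $\omega\in U$, which the contradiction does not need.

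On the averaging step, which the paper simply asserts, your caution is reasonable. However, a weak$^{*}$ integral over arbitrary $f\in W^{l,\infty}$ raises measurability questions, since translations are not norm continuous there. It is cleaner to average only on $C^{l}$ inputs, where $\theta\mapsto P\tau_{\theta}f$ is norm continuous. This gives symbol entries in $M(C^{l},W^{l,\infty})$, and Remark \ref{rem.g.inf} identifies that space with $M(W^{l,\infty})$.
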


\noindent \textbf{Proof.} In the proof we prefer to write the vectors on
columns. Suppose by contradiction that there exists a bounded projection $%
P:(X^{N})^{\dag }\rightarrow (W_{A}(X))^{\dag }$ that is onto on $%
(W_{A}(X))^{\dag }$. Let $\widetilde{P}$ be the averaged version of $P$,
namely, 
\begin{equation*}
\widetilde{P}:=\int_{\mathbb{T}^{d}}(\mathcal{T}_{-\theta }\circ P\circ 
\mathcal{T}_{\theta })d\theta ,
\end{equation*}%
where $\mathcal{T}_{\theta }$ is the translation by the vector $\theta $: $%
\mathcal{T}_{\theta }F(x)=F(x+\theta )$, for any $F\in (X^{N})^{\dag }$.

Note that the operator $\widetilde{P}:(X^{N})^{\dag }\rightarrow
(W_{A}(X))^{\dag }$ is well-defined, bounded (by (i)) and an onto projection
on $(W_{A}(X))^{\dag }$. Moreover, since $\widetilde{P}$ is commuting with
the translations, there exists a matrix $K$ of distributions on $\mathbb{T}%
^{d}$ such that we have 
\begin{equation}
(\widetilde{P}F)^{\wedge }(\chi )=\widehat{K}(\chi )\widehat{F}(\chi ),
\label{matrix-1}
\end{equation}%
on $\mathbb{Z}^{d}$, for any $F=(f_{1},...,f_{d})^{\dag }\in (X^{N})^{\dag }$
(and $\widehat{F}=(\widehat{f}_{1},...,\widehat{f}_{d})^{\dag }$), where the
entries of $\widehat{K}$ are the Fourier transforms of the corresponding
entries in $K$. Since, $\widetilde{P}$ is bounded on $(X^{d})^{\dag }$, by
testing against elements of the form $\mathbf{e}_{j}^{\dag }f\in
(X^{N})^{\dag }$, for $j\in \{1,...,d\}$, we obtain that each entry of $%
\widehat{K}(D)$ belongs to $M(X)$.

\bigskip

On the other hand, the fact that $\widetilde{P}$ is an onto projection on $%
(W_{A}(X))^{\dag }$ imposes some algebraic conditions on the matrix $%
\widehat{K}(\chi )$ in (\ref{matrix-1}). First, the fact that $\widetilde{P}$
is a projection gives us that 
\begin{equation}
A\widehat{K}=0.  \label{alg-1}
\end{equation}

Also, since $\widetilde{P}$ is an onto projection on $(W_{A}(X))^{\dag }$ we
get 
\begin{equation}
\widehat{K}(\chi )z_{\chi }=z_{\chi },  \label{alg-2}
\end{equation}%
for any vector $z_{n}\in KerA(\chi )$, for any $\chi \in \mathbb{Z}^{d}$. By
setting $z_{\chi }=(I-A^{+}(\chi )A(\chi ))w$ (for some $w\in \mathbb{C}^{N}$%
) where $A^{+}(\chi )$ is the Moore-Penrose pseudoinverse of $A(\chi )$, (%
\ref{alg-2}) gives us 
\begin{equation}
\widehat{K}(I-A^{+}A)=I-A^{+}A.  \label{alg-3}
\end{equation}

Let $(r_{\varepsilon })_{\varepsilon \in (0,1)}$ be a family of growth
functions as in (iii). Fix some unit vector $\omega \in \mathbb{S}^{d-1}$ a
parameter $\varepsilon >0$ and for any $t>0$, consider the discrete balls $%
B^{t}:=B_{d}(t\omega ,r_{\varepsilon }(t))$ (that depends on $\omega $ and $%
\varepsilon $). We have 
\begin{equation}
\lim_{t\rightarrow \infty }\frac{1}{|B^{t}|}\sum_{\chi \in B^{t}}\widehat{K}%
(\chi )=\Gamma _{\varepsilon },  \label{alg-4}
\end{equation}%
where $\Gamma _{\varepsilon }$ is an $N\times N$ matrix, independent of $%
\omega $.

Since $A$ is continuous on $\mathbb{S}^{d-1}$ and $0-$homogeneous (see
(A1)), we have that 
\begin{equation*}
A(\chi )=A(t\omega +\eta _{\chi })=A(\omega +\eta _{\chi }/t)=A(\omega
)+o_{\varepsilon \rightarrow 0}(1),
\end{equation*}%
uniformly in $\chi \in Q^{t}$, for $t$ sufficiently large (here $\eta _{\xi
}:=\xi -t\omega $ and hence, $|\eta _{\xi }|\leq r_{\varepsilon }(t)$).
Using this, (\ref{alg-1}), (\ref{alg-4}) and the fact that $\widehat{K}$ is
bounded (see (ii)) we get%
\begin{equation}
\lim_{t\rightarrow \infty }\frac{1}{|B^{t}|}\sum_{\chi \in B^{t}}A(\chi )%
\widehat{K}(\chi )=A(\omega )\Gamma _{\varepsilon }+o_{\varepsilon
\rightarrow 0}(1).  \label{A-K-0}
\end{equation}

Note that by (ii) the entries of $\Gamma _{\varepsilon }$ are uniformly
bounded. Hence, we can find a sequence $(\varepsilon _{n})_{n\geq 1}$ of
positive numbers with $\varepsilon _{n}\rightarrow 0$ and $\Gamma
_{\varepsilon _{n}}\rightarrow \Gamma $, for some $N\times N$ constant
matrix $\Gamma $, independent on $\omega$ and $\varepsilon$. Now, (\ref%
{A-K-0}), for $\varepsilon =\varepsilon _{n}$, together with (\ref{alg-1})
(and letting $n\rightarrow \infty $) gives us 
\begin{equation}
A(\omega )\Gamma =0,  \label{oo-0}
\end{equation}%
for any $\omega \in \mathbb{S}^{d-1}$.

Let $c_{0}=1$, $c_{1}$, ..., $c_{N}:\mathbb{R}^{d}\rightarrow \mathbb{R}$ be
the coefficients in the characteristic polynomial of $AA^{\ast }$, i.e., 
\begin{equation*}
\det (AA^{\ast }-xI)=(-1)^{N}\sum_{j=0}^{N}c_{j}x^{N-j}.
\end{equation*}

If $s\in \{0,1,...,N\}$ is the largest integer for which we have $c_{s}\neq
0 $, then (see \cite[Theorem 3]{Dec}) 
\begin{equation}
A^{+}=-\frac{1}{c_{s}}A^{\ast }\sum_{j=0}^{s-1}c_{j}(AA^{\ast })^{(s-1)-j}.
\label{alg-ps}
\end{equation}

By (iii) one can find a non-empty open subset $U$ of $\mathbb{S}^{d-1}$ such
that, $A$ is non-invertible on $U$ and $s$ is constant on $U$. By (\ref%
{alg-ps}) we get that $A^{+}$ (and consequently $I-A^{+}A$) is continuous on 
$U$.

As in (\ref{oo-0}) we get from (\ref{alg-3}) that 
\begin{equation}
\Gamma (I-A^{+}(\omega )A(\omega ))=I-A^{+}(\omega )A(\omega ),  \label{oo-1}
\end{equation}%
for any $\omega \in U$. However, (\ref{oo-0}) and condition (A2) implies
that $\Gamma \equiv 0$. Combining this with (\ref{oo-1}), we obtain that $%
A^{+}A=I$, on $U$, i.e., $A$ is invertible on $U$, which contradicts our
choice of $U$. \hfill $\square $

\bigskip

We can prove now Theorem \ref{th.A}.

\noindent\textbf{Proof of Theorem \ref{th.A}.} According to Proposition \ref%
{prop.b-inhom}, Lemma \ref{lem.W'} and Lemma \ref{lem.W'-1} the spaces $%
C^{l}(\mathbb{T}^{d})$, $W^{l,\infty }(\mathbb{T}^{d})$ and $W^{l,1}(\mathbb{%
T}^{d})$ are Wiener admissible (see Remark \ref{rem.r-ep} for the choice of
the family $(r_{\varepsilon })_{\varepsilon \in (0,\varepsilon_0)}$
corresponding to the two cases). It remains to apply Lemma \ref{lem.X} to
these spaces. \hfill $\square $

\begin{remark}
\label{rem.X}In the case where $X=C(\mathbb{T}^{d})$, $L^{\infty }(\mathbb{T}%
^{d})$ or $L^{1}(\mathbb{T}^{d})$, all the Fourier multipliers on $X$ are
given by convolution with Radon measures. Hence, such a space $X$ satisfies
the hypotheses of Theorem \ref{th.A} and we get directly (see Remark \ref%
{rem.A}) Theorem \ref{th.Henk'} in the case $l=0$ and the fact that $%
G_{1}(L^{1}(\mathbb{T}^{d}))$ is not complemented in $(L^{1}(\mathbb{T}%
^{d}))^{d}$ (for a stronger statement see \cite[Corollary 2]{Ki}).
\end{remark}

Theorem \ref{th.A} can be used to easily show that gradient spaces of higher
order are not complemented in gradient spaces of lower order. To be more
precise let us introduce some notation. Given a distribution $f$ on $\mathbb{%
T}^{d}$ and an integer $s\geq 0$, the $s-$gradient of $f$ is the vector
valued distribution $\nabla ^{s}f=(\nabla ^{\alpha }f)_{|\alpha |=s}$ (by
convention $\nabla ^{0}f=f$), where the elements $\nabla ^{\alpha }f$ are
listed in the lexicographic order given by $\alpha $ (note that $\nabla
^{s}f $ has $d^{s}$ components). Suppose $Y(\mathbb{T}^{d})$ is one of the
space $C^{l_{0}}(\mathbb{T}^{d})$, $W^{l_{0},\infty }(\mathbb{T}^{d})$ or $%
W^{l_{0},1}(\mathbb{T}^{d})$ for some integer $l_{0}\geq 0$. For any integer 
$s\geq 0$, we define the space of the $s-$gradients of $Y(\mathbb{T}^{d})$,
as 
\begin{equation*}
G_{s}(Y(\mathbb{T}^{d})):=\{\nabla ^{s}f\in (Y(\mathbb{T}^{d}))^{d^{s}}\mid f%
\text{ distribution on }\mathbb{T}^{d}\}\subset (Y(\mathbb{T}^{d}))^{d^{s}},
\end{equation*}%
where the norm is induced by the norm in $(Y(\mathbb{T}^{d}))^{d^{s}}$ (by
convention $G_{0}(Y(\mathbb{T}^{d}))=Y(\mathbb{T}^{d})$). Also, for any
integer $l\geq 0$, by $Y^{(l)}(\mathbb{T}^{d})$ we denote the space $%
C^{l_{0}+l}(\mathbb{T}^{d})$, $W^{l_{0}+l,\infty }(\mathbb{T}^{d})$ or $%
W^{l_{0}+l,1}(\mathbb{T}^{d})$ respectively. Note that for any integers $%
s,l,j\geq 0$ with $j<s$, the space $G_{s}(Y^{(l)}(\mathbb{T}^{d}))$ is a
closed subspace of $(G_{j}(Y^{(l)}(\mathbb{T}^{d})))^{d^{s-j}}$.

\begin{theorem}
\label{th.GC} Suppose $d\geq 2$. Let $Y(\mathbb{T}^{d})$ be one of the space 
$C^(\mathbb{T}^{d})$, $L^{\infty }(\mathbb{T}^{d})$ or $L^{1}(\mathbb{T}%
^{d}) $. For any nonnegative integers $l,s,j$ with $j<s$, the space $%
G_{s}(Y^{(l)}(\mathbb{T}^{d}))$ is noncomplemented in $(G_{j}(Y^{(l)}(%
\mathbb{T}^{d})))^{d^{s-j}}$.
\end{theorem}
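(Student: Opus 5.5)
The plan is to reduce Theorem \ref{th.GC} to Theorem \ref{th.A}, or more precisely to Lemma \ref{lem.X}, by passing to a space $X$ on which $G_j(Y^{(l)}(\mathbb{T}^{d}))$ becomes a full product space. The first reduction is to the case $j=s-1$. Suppose $G_s$ were complemented in $(G_j)^{d^{s-j}}$ by a projection $P$. Note that $(G_{s-1})^{d}$ is a closed subspace of $(G_j)^{d^{s-j}}$: group the components by the last index, since $\nabla^{s-1}g$ is in particular a block of $(s-1-j)$-th derivatives of $\nabla^j g$. This subspace contains $G_s$, so restricting $P$ to it gives a bounded projection onto $G_s$. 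So it suffices to treat $j=s-1$.

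Next I would identify $G_{s-1}(Y^{(l)})$ with a translation invariant Banach space of scalar distributions. Let $X$ be the space of distributions $g$ with $\nabla^{s-1}g\in (Y^{(l)})^{d^{s-1}}$, normed by $\|g\|_X:=\|\nabla^{s-1}g\|+|\widehat g(0)|$. Then $G_{s-1}(Y^{(l)})\cong X/\mathbb{C}$, or equivalently the mean-zero part $X_0$. Under this identification, $G_s(Y^{(l)})\subset (G_{s-1})^d$ corresponds to the gradients $\{\nabla g\}$, that is, to the curl-free vectors in $X_0^d$. This is $W_A(X_0)$ with $A(\xi)=|\xi|^{-1}\curl$ completed by zeros, as in Remark \ref{rem.A}, so (A1)--(A3) hold. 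Averaging a projection over translations (the argument in Lemma \ref{lem.X}) only uses translation invariance. The mean-zero issue is harmless because the multipliers may be set to be anything at $\chi=0$.

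It remains to show that $X$ (or $X_0$) is Wiener admissible; this is the main step. Translation invariance is clear. The key point is that every Fourier multiplier $T_k\in M(X_0)$ also lies in $M(Y^{(l+s-1)}_0)$, because on mean-zero functions $\|\nabla^{s-1}g\|_{Y^{(l)}}\sim\|g\|_{Y^{(l+s-1)}}$ by Poincaré-type inequalities on the torus. This reduction is routine for $W^{m,1}$, $W^{m,\infty}$ and $C^m$: the inhomogeneous norm is controlled by the top-order seminorm for mean-zero functions. In fact $X_0$ and $Y^{(l+s-1)}_0$ coincide with equivalent norms. Hence property (ii) follows from Proposition \ref{prop.b-inhom}, and property (iii) follows from Lemma \ref{lem.W'} in the $L^\infty$ and $C$ cases (with sublinear $r$) and from Lemma \ref{lem.W'-1} in the $L^1$ case (with $r_\varepsilon(t)=\varepsilon t$). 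Modifying $\widehat k(0)$ does not affect the Wiener averages, so these lemmas apply to the kernel extended to the full space.

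Finally, Lemma \ref{lem.X} applied to $X_0$ and the curl operator shows that curl-free vectors are not complemented in $X_0^d$. Transporting this back through the isomorphism $X_0\cong G_{s-1}(Y^{(l)})$ gives the theorem.

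The obstacle I expect is checking that the identification respects the subspace exactly. Concretely, a vector $(\nabla^{s-1}g_1,\dots,\nabla^{s-1}g_d)$ lies in $G_s$ iff $(g_i)$ is curl-free modulo constants, and this uses $d\ge 2$ and the closedness of gradients on the torus. I will also verify that $C^{m}$ (not merely $W^{m,\infty}$) is preserved, which follows since $\nabla^{s-1}g\in C^l$ iff $g\in C^{l+s-1}$ for mean-zero $g$.
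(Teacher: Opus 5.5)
Your proposal is correct and is essentially the paper's argument. The paper likewise restricts the projection to $(G_{s-1})^{d}$ to reach $j=s-1$, and (after a separate reduction to $l=0$) transfers the problem through $\nabla^{s-1}$ to gradients inside $(Y^{(s-1)})^{d}$. It then concludes from Theorem \ref{th.A}, whose proof is exactly your Wiener-admissibility check via Proposition \ref{prop.b-inhom} and Lemmas \ref{lem.W'}, \ref{lem.W'-1} followed by Lemma \ref{lem.X}. Your single identification with the mean-zero space $X_0$ simply merges the paper's two commutative diagrams.

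One small correction: the zero-padded $|\xi|^{-1}\curl$ satisfies (A2) only for $d\le 3$. For $d\ge 4$ one must pad with zero columns, and the corresponding coordinate vectors lie in every $\ker A(\omega)$. Use instead $A(\xi)=\bigl(\delta_{ij}-\xi_i\xi_j/|\xi|^{2}\bigr)_{i,j=1}^{d}$. It satisfies (A1)--(A3) for every $d\ge 2$, and its $A(D)$-free vectors are exactly the curl-free ones.
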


\noindent \textbf{Proof.} To ease the notation we denote the space $Y(%
\mathbb{T}^{d})$ by $Y$. Several reductions are in order. First, we observe
that the general statement of Theorem \ref{th.GC} can be deduced from the
case $l=0$. This follows from the commutative diagram 
\begin{equation*}
\begin{tikzcd} (G_{j}(Y^{(l)}))^{d^{s-j}} \arrow[r,"P"] & G_{s}(Y^{(l)})
\arrow{d}{\gamma_{l}} \\ (G_{j}(G_{l}(Y))^{d^{s-j}} \arrow[u, "h_{l}"]
\arrow[r, "P_{1}"]& G_{s}(G_{l}(Y)) \end{tikzcd}
\end{equation*}%
and the equalities of spaces 
\begin{equation*}
(G_{j}(G_{l}(Y)))^{d^{s-j}}=(G_{j+l}(Y))^{d^{s-j}},
\end{equation*}%
and 
\begin{equation*}
G_{s}(G_{l}(Y))=G_{s+l}(Y).
\end{equation*}

Indeed, the existence of a bounded projection $P$ implies the existence of a
bounded projection $P_{1}:=\gamma _{l}\circ P\circ h_{l}$, where $h_{l}$ and 
$\gamma _{l}$ are defined as follows. The mapping $\gamma _{l}$ is the
operator $\nabla ^{l}$ acting on each scalar coordinate function. For a
given element $F\in (G_{j}(G_{l}(Y)))^{d^{s-j}}$, $h_{l}(F)$ is the unique
vector valued function of mean zero on $\mathbb{T}^{d}$ that belongs to $%
(G_{j}(Y^{(l)}))^{d^{s-j}}$ and $\nabla ^{l}h_{l}(F)=F$ (again, $\nabla ^{l}$
is acting on each scalar coordinate function).

To disprove the existence of such $P_{1}$, it suffices to treat only the
particular case where instead of a general $j$ ($<s$) we have $s-1$. This
can be seen from the commutative diagram: 
\begin{equation*}
\begin{tikzcd}[column sep=small] (G_{j}(Y))^{d^{s-j}} \arrow[rr, "P_{1}'"] &
& G_{s}(Y)\\ & (G_{s-1}(Y))^{d} \arrow[ul, hook, "\iota"] \arrow[ur,
"P_{2}"] & \end{tikzcd}
\end{equation*}%
where $\iota $ is the canonical inclusion. The equality of spaces 
\begin{equation*}
G_{s}(Y)=G_{1}(G_{s-1}(Y)),
\end{equation*}%
enable us to use the following commutative diagram to reduce further the
problem. With $h_{s-1}^{\prime }$, $\gamma _{s-1}^{\prime }$ defined in a
natural way (see the definition of $h_{l}$, $\gamma _{l}$ above) we have 
\begin{equation*}
\begin{tikzcd} (G_{s-1}(Y))^{d} \arrow[r,"P_{2}"] & G_{1}(G_{s-1}(Y))
\arrow{d}{h_{s-1}'} \\ (Y^{(s-1)})^{d} \arrow[u, "\gamma_{s-1}'"] \arrow[r,
"P_{3}"]& G_{1}(Y^{(s-1)}) \end{tikzcd}
\end{equation*}%
and it remains to prove that such a bounded projection $P_{3}$ does not
exist. However, this follows directly from Theorem \ref{th.A}. \hfill $%
\square $

\bigskip

Let us mention that, despite the fact that Henkin's
argument (see \cite{Henk}) is written in the case of $C^{l}(\mathbb{S}^{2})$%
, one can use as well other \textquotedblleft reasonable\textquotedblright  \ spaces $X(\mathbb{S}^{2})$ as for instance, when $X=W^{l,1}$. (We need the
norm of $X$ to be invariant under the action of the orthogonal group and
that $X$ can be reduced to its \textquotedblleft flat\textquotedblright\
version, on $\mathbb{R}^{d}$.) This is due to the fact that the main part of
the argument is algebraic (it is not strongly related to the function space
we use). For simplicity let us work on the $2-$dimensional sphere $\mathbb{S}%
^{2}$. Assume that $P$ is a bounded onto projection from the tangent vector
fields on $\mathbb{S}^{2}$ to the gradients of functions on $\mathbb{S}^{2}$%
. By averaging against the orthogonal group (which is compact) we obtain a
projection $\widetilde{P}$ that is invariant under the action of this group
onto on the gradient space. However, it turns out that there exists only one
projection $\widetilde{P}$ with this property, for which Henkin found the
exact formula: for any vector field $F$ in $X(\mathbb{S}^{2})$ tangent to $%
\mathbb{S}^{2}$ we have

\begin{equation*}
\widetilde{P}F(x)=\nabla \int_{\mathbb{S}^{2}}ctg\left( \frac{\rho (x,y)}{2}%
\right) \left\langle F(y),\mathbf{\tau }_{y,x}\right\rangle d\sigma (y).
\end{equation*}

Here, $d\sigma $ is the normalized \textquotedblleft
surface\textquotedblright\ measure on $\mathbb{S}^{2}$, $\rho (x,y)$ is
the geodesic distance on $\mathbb{S}^{d-1}$ between $x$ and $y$, and, $%
\mathbf{\tau }_{y,x}$ is the unit vector tangent to $\mathbb{S}^{d-1}$ in
the point $y$ that points to $x$ on the small arc joining $y$ and $x$ in $%
\mathbb{S}^{d-1}$.

It remains to show that $\widetilde{P}$ is not bounded. Henkin does this by
constructing a counterexample directly for the operator $\widetilde{P}$, in
the case of the space $X=C^{l}$. Nevertheless, it is convenient to replace $%
\widetilde{P}$ with an Euclidean version as follows. We consider $F$
supported in small caps around a fixed point. By a dilation and limiting
argument we get that the operator 
\begin{equation*}
\widetilde{P}_{e}F(x):=\nabla \int_{\mathbb{R}^{2}}\frac{1}{|x-y|}%
\left\langle F(y),\frac{x-y}{|x-y|}\right\rangle dy,
\end{equation*}%
must be bounded on $X(\mathbb{R}^{2})$ (where $F$ is a vector field on $%
\mathbb{R}^{2}$). This implies that the double Riesz transforms are bounded
on $X(\mathbb{R}^{2})$. In the case of $X=W^{l,\infty }$ (alternatively $%
C^{l}$) or $X=W^{l,1}$ this can be disproved by using Lemma \ref{lem.jfa} or
Lemma \ref{lem.Bon-Poo}.

The drawback of this method is that it does not fit to the more general $%
A(D) $ operators covered by Theorem \ref{th.A}.

\section*{Appendix}

\subsection{Wiener's Theorem for the singularities of measures}

For the sake of completeness we give here a multidimensional version of
Wiener's theorem on $\mathbb{R}^{d}$. For a multidimensional version on $%
\mathbb{T}^{d}$ see \cite[Lemma 9]{KW-1}.

As in \cite[p. 44]{K} we observe that, any measure $\mu \in \mathcal{M}(%
\mathbb{R}^{d})$ can be uniquely decomposed as 
\begin{equation}
\mu =\mu _{c}+\mu _{d},  \label{miuA-1}
\end{equation}%
where $\mu _{c}$, $\mu _{d}$ are the continuous respectively discrete part
of $\mu $. Indeed, define the set of \textquotedblleft
singularities\textquotedblright\ $S:=\{\tau \in \mathbb{R}^{d}$ $|$ $\mu
(\{\tau \})\neq 0\}$. We have 
\begin{equation*}
\sup_{\substack{ J\subseteq S  \\ |J|<\infty }}\sum_{\tau \in J}\left\vert
\mu (\{\tau \})\right\vert \leq |\mu |(\mathbb{R}^{d})<\infty .
\end{equation*}

It follows that $S$ is countable, in particular $\mu-$measurable. We can
define $\mu _{d}$ by the equality $\mu _{d}(E)=\mu (E\cap S)$, for any $\mu
- $measurable set $E\subseteq \mathbb{R}^{d}$. If $(\tau _{j})_{j\geq 1}$ is
the sequence of the elements of $S$, we have 
\begin{equation*}
\mu _{d}=\sum_{j=1}^{\infty }a_{j}\delta _{\tau _{j}},
\end{equation*}
where $a_{j}:=\mu (\{\tau _{j}\})$, for any $j\geq 1$. One can put now $\mu
_{c}:=\mu -\mu _{d}$ and check directly that $\mu _{c}$ is a continuous
measure ($\mu _{c}(\{\tau \})=0$ for any $\tau \in \mathbb{R}^{d}$). Also,
it can be seen that the decomposition (\ref{miuA-1}) is unique.

If $\mu ^{\sharp }\in \mathcal{M}(\mathbb{R}^{d})$, with $\mu ^{\sharp }(E):=%
\overline{\mu (-E)}$, for any $\mu -$measurable set $E\subseteq \mathbb{R}
^{d}$, we have 
\begin{equation*}
\mu _{d}^{\sharp}=\sum_{j=1}^{\infty }\overline{a_{j}}\delta _{-\tau _{j}}.
\end{equation*}

Since $(\mu \ast \mu ^{\sharp })_{d}=\mu _{d}\ast \mu _{d}^{\sharp }$ , as
in \cite[Lemma 7.13]{K} we get by a direct computation that 
\begin{equation}
\mu \ast \mu ^{\sharp }(\{0\})=\mu _{d}\ast \mu _{d}^{\sharp
}(\{0\})=\sum_{j=1}^{\infty }|a_{j}|^{2}.  \label{miuA-conv}
\end{equation}

We have the following general result that we will refer to as Wiener's lemma:

\begin{lemma}
\label{lem.Wien}Let $\nu \in \mathcal{M}(\mathbb{R}^{d})$. Then 
\begin{equation}
\lim_{Q\rightarrow \infty }\strokedint_{Q}\widehat{\nu }(\eta )d\eta =c\nu
(\{0\}),  \label{Wien-0}
\end{equation}%
for some non-zero constant $c$. Also, for any two functions $\xi :(0,\infty
)\rightarrow \mathbb{R}^{d}$ and $r:(0,\infty )\rightarrow (0,\infty )$ with 
$\lim_{t\rightarrow \infty }r(t)=\infty $ we have 
\begin{equation}
\lim_{t\rightarrow \infty }\strokedint_{B(\xi (t),r(t))}\widehat{\nu }(\eta
)d\eta =c\nu (\{0\}).  \label{Wien-00}
\end{equation}
\end{lemma}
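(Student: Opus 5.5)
The plan is to compute the averages of $\widehat{\nu}$ directly via Fubini and reduce to dominated convergence. Write $\widehat{\nu}(\eta)=\int_{\mathbb{R}^{d}}e^{-i\langle x,\eta\rangle}d\nu(x)$. Since $\nu$ is finite and the integrand is bounded, Fubini gives, for any bounded set $\Omega$ of positive measure,
\begin{equation*}
\strokedint_{\Omega}\widehat{\nu}(\eta)d\eta=\int_{\mathbb{R}^{d}}\Big(\strokedint_{\Omega}e^{-i\langle x,\eta\rangle}d\eta\Big)d\nu(x)=:\int_{\mathbb{R}^{d}}\phi_{\Omega}(x)\,d\nu(x),
\end{equation*}
with $|\phi_{\Omega}(x)|\leq 1$ and $\phi_{\Omega}(0)=1$. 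The statement then follows once we show $\phi_{\Omega}(x)\to 0$ for every fixed $x\neq 0$ as $\Omega$ grows along the relevant family. Dominated convergence (the dominating function is $1\in L^{1}(|\nu|)$) then yields the limit $\nu(\{0\})$, so $c=1$ with this normalization. If a different normalization of the averaging is intended, only the constant changes.

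For the balls, $\Omega=B(\xi(t),r(t))$, we translate: $\phi_{\Omega}(x)=e^{-i\langle x,\xi(t)\rangle}\,\strokedint_{B(0,r(t))}e^{-i\langle x,\eta\rangle}d\eta$. The modulus factor $e^{-i\langle x,\xi(t)\rangle}$ is harmless since it has modulus one, and this is why arbitrary centers $\xi(t)$ are allowed. By scaling,
\begin{equation*}
\strokedint_{B(0,r)}e^{-i\langle x,\eta\rangle}d\eta=\frac{\widehat{\mathbf{1}_{B(0,1)}}(rx)}{|B(0,1)|}.
\end{equation*}
By the Riemann--Lebesgue lemma applied to $\mathbf{1}_{B(0,1)}\in L^{1}$, this tends to $0$ as $r|x|\to\infty$. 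Since $r(t)\to\infty$ and $x\neq0$ is fixed, $\phi_{\Omega}(x)\to0$, which proves (\ref{Wien-00}).

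For (\ref{Wien-0}), where $Q\to\infty$ means cubes, which I read as centered cubes $Q=[-R,R]^{d}$ with side going to infinity, the average factorizes as $\prod_{j=1}^{d}\frac{\sin(Rx_{j})}{Rx_{j}}$. This is bounded by $1$ and tends to $0$ whenever some $x_{j}\neq0$, i.e., whenever $x\neq 0$. Arbitrary centers are handled by the same modulus-one translation factor. The same dominated convergence argument concludes.

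The only genuine point to check is pointwise decay of $\phi_{\Omega}$ off the origin uniformly bounded by $1$. I expect no real obstacle beyond making sure the convention for "$Q\to\infty$" (all side lengths tending to infinity) is the one intended. If non-cubical boxes with independent side lengths are meant, the product formula still works provided every side tends to infinity.
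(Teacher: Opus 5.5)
Your proof is correct. For the ball statement (\ref{Wien-00}) it takes a more direct route than the paper.

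For cubes the two arguments coincide in substance. The paper also writes the average of $\widehat{\nu}$ as the pairing of $\nu$ with the normalized kernel $|Q|^{-1}(\mathbf{1}_{Q})^{\vee}$. It uses the explicit product formula for boxes $[q_1,r_1)\times\cdots\times[q_d,r_d)$: the kernel is bounded by its value at the origin and tends to $0$ uniformly off any neighbourhood of the origin. It then concludes by a Katznelson-type splitting of $\nu-\nu(\{0\})\delta_0$, which is your dominated convergence in a different guise.

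The difference is in how balls are reached. The paper never computes the kernel for balls. It approximates each $B(\xi(t),r(t))$ from inside by finitely many almost disjoint cubes whose relative volumes do not depend on $t$ and fill all but a fraction $\varepsilon$ of the ball. It applies the cube result to each piece and then lets $\varepsilon\to0$.

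You instead note that the moving center contributes only the unimodular factor $e^{-i\langle x,\xi(t)\rangle}$. The remaining kernel is $\widehat{\mathbf{1}_{B(0,1)}}(r(t)x)/|B(0,1)|$, which tends to $0$ for each fixed $x\neq0$ by Riemann--Lebesgue. This buys several things:
\begin{itemize}
\item it is shorter and avoids the covering step entirely;
\item it works verbatim for averages over $\xi(t)+r(t)E$ with any bounded set $E$ of positive measure;
\item it identifies $c=1$ for the paper's normalization of the Fourier transform.
\end{itemize}

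The paper's modular route reduces everything to one explicit computation for boxes and yields uniform decay of the kernel away from the origin. Nothing in the lemma or its applications needs more than the pointwise decay you use.
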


\noindent \textbf{Proof.} (The proof of the first part is also explicitly
written in \cite[Proposition 3.1]{P}; we present it here for the convenience
of the reader.) For $Q=[q_{1},r_{1})\times ...\times \lbrack q_{d},r_{d})$
(where $q_{1}<r_{1}$, ..., $q_{d}<r_{d}$ are some real numbers) we have 
\begin{equation*}
|Q|^{-1}(\mathbf{1}_{Q})^{\vee }(x)=c_{1}\prod_{j:x_{j}\neq 0}\frac{%
e^{ir_{j}x_{j}}-e^{iq_{j}x_{j}}}{ix_{j}(r_{j}-q_{j})},
\end{equation*}%
for any $x\in \mathbb{R}^{d}$, with the convention that the product over an
empty set equals $c_{1}$. Hence, the continuous function $|Q|^{-1}(\mathbf{1}%
_{Q})^{\vee }$ is bounded by $c_{1}$, equals $c_{1}$ at the origin and, when 
$Q\rightarrow \infty $, we have\ $|Q|^{-1}(\mathbf{1}_{Q})^{\vee
}\rightarrow 0$ uniformly, out of any neighborhood of the origin.

As in the proof of \cite[Theorem 7.13]{K} we get that, 
\begin{equation}
\lim_{Q\rightarrow \infty }\left\langle |Q|^{-1}(\mathbf{1}_{Q})^{\vee },\nu
-\nu (\{0\})\delta _{0}\right\rangle =0.  \label{miuA-lim}
\end{equation}

We also have 
\begin{eqnarray*}
\left\langle |Q|^{-1}(\mathbf{1}_{Q})^{\vee },\nu -\nu (\{0\})\delta
_{0}\right\rangle &=&\left\langle |Q|^{-1}(\mathbf{1}_{Q})^{\vee },\nu
\right\rangle -\left\langle |Q|^{-1}(\mathbf{1}_{Q})^{\vee },\nu
(\{0\})\delta _{0}\right\rangle \\
&=&\left\langle |Q|^{-1}\mathbf{1}_{Q},\widehat{\nu }\right\rangle -c%
\overline{\nu (\{0\})},
\end{eqnarray*}%
which together with (\ref{miuA-lim}) gives 
\begin{equation*}
c\overline{\nu (\{0\})}=\lim_{Q\rightarrow \infty }\left\langle |Q|^{-1}%
\mathbf{1}_{Q},\widehat{\nu }\right\rangle ,
\end{equation*}%
which is equivalent to (\ref{Wien-0}).

The equality (\ref{Wien-00}) can be directly obtained from (\ref{Wien-0}) by
using Withney decompositions of the balls $B(\xi (t),r(t))$ as follows.
Given $\varepsilon >0$ there exists some positive integer $N_{\varepsilon }$
(independent of $t$) and almost disjoint cubes $Q_{t}^{1},..,Q_{t}^{N_{%
\varepsilon }}$ such that $Q_{t}^{1},..,Q_{t}^{N_{\varepsilon }}\subset
B(\xi (t),r(t))$, each quotient $q_{j}:=|Q_{t}^{j}|/|B(\xi (t),r(t))|$
constant in $t$ and 
\begin{equation*}
1-\varepsilon <\sum_{j=1}^{N_{\varepsilon }}q_{j}<1,
\end{equation*}%
for any $t>0$. Hence, by applying (\ref{Wien-0}) on each cube $Q_{t}^{j}$,
we have 
\begin{eqnarray*}
\strokedint_{B(\xi (t),r(t))}\widehat{\nu }(\eta )d\eta
&=&\sum_{j=1}^{N_{\varepsilon }}q_{j}\strokedint_{Q_{t}^{j}}\widehat{\nu }%
(\eta )d\eta +\varepsilon O_{t\rightarrow \infty }(1) \\
&=&c\nu (\{0\})\sum_{j=1}^{N_{\varepsilon }}q_{j}+\varepsilon
O_{t\rightarrow \infty }(1)+o_{t\rightarrow \infty }(1) \\
&=&c\nu (\{0\})+\varepsilon O_{t\rightarrow \infty }(1)+o_{t\rightarrow
\infty }(1),
\end{eqnarray*}%
and this proves (\ref{Wien-00}).\hfill $\square $

\bigskip

Using Lemma \ref{lem.Wien} and the above computations involving $\mu \ast
\mu ^{\sharp }$ we obtain Wiener's theorem

\begin{theorem}
\label{th.W}Let $\mu \in \mathcal{M}(\mathbb{R}^{d})$. Then 
\begin{equation}
\lim_{Q\rightarrow \infty }\strokedint_{Q}\left\vert \widehat{\mu }(\eta
)\right\vert ^{2}d\eta =c\sum_{\tau \in \mathbb{R}^{d}}\left\vert \mu
(\{\tau \})\right\vert ^{2}.  \label{miuA-W}
\end{equation}

Also, for any two functions $\xi :(0,\infty )\rightarrow \mathbb{R}^{d}$ and 
$r:(0,\infty )\rightarrow (0,\infty )$ with $\lim_{t\rightarrow \infty
}r(t)=\infty $ we have 
\begin{equation}
\lim_{t\rightarrow \infty }\strokedint_{B(\xi (t),r(t))}\left\vert \widehat{%
\mu }(\eta )\right\vert ^{2}d\eta =c\sum_{\tau \in \mathbb{R}^{d}}\left\vert
\mu (\{\tau \})\right\vert ^{2}.  \label{miuaA-W-1}
\end{equation}
\end{theorem}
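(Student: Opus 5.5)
The plan is to apply Lemma \ref{lem.Wien} to the measure $\nu:=\mu\ast\mu^{\sharp}$ and then use (\ref{miuA-conv}). First, $\nu\in\mathcal{M}(\mathbb{R}^{d})$ with $\|\nu\|_{\mathcal{M}}\leq\|\mu\|_{\mathcal{M}}^{2}$. Its Fourier transform is $\widehat{\nu}=\widehat{\mu}\,\widehat{\mu^{\sharp}}$. The convention $\mu^{\sharp}(E)=\overline{\mu(-E)}$ gives
\begin{equation*}
\widehat{\mu^{\sharp}}(\eta)=\int e^{-i\langle x,\eta\rangle}d\mu^{\sharp}(x)=\overline{\int e^{-i\langle x,\eta\rangle}d\mu(x)}=\overline{\widehat{\mu}(\eta)},
\end{equation*}
so that $\widehat{\nu}=|\widehat{\mu}|^{2}$, a bounded continuous function. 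The only point to check here is that the complex conjugation and the reflection $x\mapsto -x$ combine correctly with the sign convention $e^{-i\langle x,\xi\rangle}$ fixed in the notation section.

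Next, apply (\ref{Wien-0}) to $\nu$. This gives $\lim_{Q\rightarrow\infty}\strokedint_{Q}|\widehat{\mu}|^{2}=c\,\nu(\{0\})$, with the same non-zero constant $c$ as in Lemma \ref{lem.Wien}. Similarly, (\ref{Wien-00}) applied to $\nu$ gives the ball version (\ref{miuaA-W-1}) for arbitrary centres $\xi(t)$ and radii $r(t)\rightarrow\infty$. It then remains only to identify $\nu(\{0\})$.

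For that I use (\ref{miuA-conv}): $\mu\ast\mu^{\sharp}(\{0\})=\sum_{j}|a_{j}|^{2}=\sum_{\tau\in\mathbb{R}^{d}}|\mu(\{\tau\})|^{2}$. If one prefers not to quote it, it can be verified by Fubini:
\begin{equation*}
\nu(\{0\})=\int\mu^{\sharp}(\{-x\})\,d\mu(x)=\int\overline{\mu(\{x\})}\,d\mu(x).
\end{equation*}
The integrand vanishes off the countable set $S$ of atoms, so the integral reduces to $\sum_{\tau\in S}|\mu(\{\tau\})|^{2}$. This sum converges because $\sum|\mu(\{\tau\})|\leq\|\mu\|$.

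No real obstacle is expected. The only delicate points are the sign and conjugation bookkeeping in $\widehat{\mu^{\sharp}}=\overline{\widehat{\mu}}$, and the justification of Fubini for the product measure $\mu\otimes\mu^{\sharp}$. The latter is fine because both measures are finite and the diagonal set $\{(x,y):x+y=0\}$ is closed.
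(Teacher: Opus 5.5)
Your proposal is correct and follows the paper's proof: you apply Lemma \ref{lem.Wien}, via (\ref{Wien-0}) and (\ref{Wien-00}), to $\nu=\mu\ast\mu^{\sharp}$, use $\widehat{\nu}=|\widehat{\mu}|^{2}$, and identify $\nu(\{0\})$ through (\ref{miuA-conv}). Your Fubini check that $\nu(\{0\})=\sum_{\tau}|\mu(\{\tau\})|^{2}$ and your sign/conjugation check that $\widehat{\mu^{\sharp}}=\overline{\widehat{\mu}}$ supply details the paper calls a ``direct computation''.
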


\textbf{Proof.} Let $\nu $ be the measure $\nu =\mu \ast \mu ^{\sharp }$.
Using (\ref{Wien-0}), by taking into account that $\widehat{\nu }=\left\vert 
\widehat{\mu }\right\vert ^{2}$ and (see (\ref{miuA-conv})) 
\begin{equation*}
\overline{\nu (\{0\})}=c\sum_{\tau \in \mathbb{R}^{d}}\left\vert \mu (\{\tau
\})\right\vert ^{2},
\end{equation*}%
we obtain (\ref{miuA-W}). Similarly, (\ref{miuaA-W-1}) follows from (\ref%
{Wien-0}).\hfill $\square $

\begin{remark}
Versions of this result appear in the work of Wiener (see for instance \cite[%
Theorem 24, p. 1461]{W}), however, the sets on which averages are considered
are with fixed centers. What is important for the applications in this paper
is the fact that in Lemma \ref{lem.Wien} and Theorem \ref{th.W} (for which
the proof is in essence the same as Wiener's proof) we allow the centers of
the cubes or of the balls to vary in the limiting process. It seems that the
first mention of this aspect appears in the older version of Katznelson's
book \cite{K} (from 1963). Surprisingly, the first application that takes
into account the moving centers seems to be given in the work of Pe\l %
{}czy\' nski in \cite[p. 401 and Theorem 3.1]{P} (1989) who credits the idea
to J.-P. Kahane .
\end{remark}

\subsection{The Bonami-Mohanty transference result revisited}

Here we present a slight generalisation of Theorem 3.4 in \cite{Bon-Moh} to
a wider class of Banach function spaces.

Let $X(\mathbb{R}^{d})$ be a Banach space of tempered distributions on $%
\mathbb{R}^{d}$ with the following properties:

\bigskip

\begin{itemize}
\item[(i)] $X(\mathbb{R}^{d})$ is translation invariant, i.e., 
\begin{equation*}
\left\Vert f(\cdot +v)\right\Vert _{X(\mathbb{R}^{d})}\lesssim \left\Vert
f\right\Vert _{X(\mathbb{R}^{d})},
\end{equation*}%
for any $f\in X(\mathbb{R}^{d})$ and any $v\in \mathbb{R}^{d}$ (where the
implicit constant does not depend on $f$ or $v$).

\bigskip

\item[(ii)] If $\varphi \in C_{c}^{\infty }(\mathbb{R}^{d})$, then $\varphi
f\in X(\mathbb{R}^{d})$ and 
\begin{equation*}
\left\Vert \varphi f\right\Vert _{X(\mathbb{R}^{d})}\lesssim _{\varphi
}\left\Vert f\right\Vert _{X(\mathbb{R}^{d})},
\end{equation*}%
for any $f\in X(\mathbb{R}^{d})$.

\bigskip

\item[(iii)] Suppose $(f_{n})_{n\geq 1}$, $(g_{n})_{n\geq 1}$ are two
sequences in $X(\mathbb{R}^{d})$, with 
\begin{equation*}
(\supp f_{n_{1}})\cap (\supp f_{n_{2}})=(\supp
g_{n_{1}})\cap (\supp g_{n_{2}})=\varnothing ,
\end{equation*}
for any $n_{1}\neq n_{2}$ and $\left\Vert f_{n}\right\Vert _{X(\mathbb{R}%
^{d})}\leq \left\Vert g_{n}\right\Vert _{X(\mathbb{R}^{d})}$, for any $n\geq
1$. Then,%
\begin{equation*}
\left\Vert \sum_{n=1}^{\infty }f_{n}\right\Vert _{X(\mathbb{R}^{d})}\leq
\left\Vert \sum_{n=1}^{\infty }g_{n}\right\Vert _{X(\mathbb{R}^{d})}.
\end{equation*}

\item[(iv)] There exists some positive integer $N$ such that, if $e_{\chi
}:=\exp (2\pi i\left\langle \chi ,\cdot \right\rangle) $, for $\chi \in 
\mathbb{Z}^{d} $, then 
\begin{equation*}
\left\Vert e_{\chi }f\right\Vert _{X(\mathbb{T}^{d})}\lesssim \left\langle
\chi \right\rangle ^{N}\left\Vert f\right\Vert _{X(\mathbb{T}^{d})},
\end{equation*}%
for any $f\in X(\mathbb{T}^{d})$. Here, $X(\mathbb{T}^{d})$ is the normed
space of distributions on $\mathbb{T}^{d}$ given by the norm%
\begin{equation*}
\left\Vert f\right\Vert _{X(\mathbb{T}^{d})}:=\left\Vert \varphi _{0}%
\widetilde{f}\right\Vert _{X(\mathbb{R}^{d})},
\end{equation*}%
where $\widetilde{f}$ is the periodic extension of $f$ to $\mathbb{R}^{d}$
and $\varphi _{0}\in C_{c}^{\infty }([-8,8]^{d})$ is a fixed function such
that $\varphi _{0}\equiv 1$ on $[-4,4]^{d}$ (we identify $\mathbb{T}^{d}$
with $Q_{0}:=[-1,1]^{d}$).
\end{itemize}

\bigskip

\begin{remark}
Examples of spaces satisfying the above conditions are the classical Sobolev
space $W^{s,p}(\mathbb{R}^{d})$, for $p\in [1,\infty]$, as well as $%
C_{0}^{l}(\mathbb{R}^{d})$.
\end{remark}

\begin{proposition}
\label{Bon-Moh-general}Suppose $X(\mathbb{R}^{d})$ is a Banach function
space with the properties (i)--(iv) above. Let $K$ be a distribution on the
torus such that $T_{K}\in M(X(\mathbb{T}^{d}))$ and let $\psi $ be a
Schwartz function on $\mathbb{R}^{d}$. Then, $T_{\widetilde{K}\psi }\in M(X(%
\mathbb{R}^{d}))$.
\end{proposition}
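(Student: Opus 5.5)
We prove Proposition \ref{Bon-Moh-general} by following the scheme of Bonami--Mohanty (Theorem 3.4 in \cite{Bon-Moh}): a periodic operator, multiplied by a Schwartz function, acts on $\mathbb{R}^d$ locally like a periodized operator. Write $\widetilde{K}\psi$ as a superposition of modulated periodic kernels. Expanding $\psi$ on the lattice, $\widehat{\widetilde{K}\psi}(\xi)=\sum_{\chi}\widehat{K}(\chi)\widehat{\psi}(\xi-\chi)$. Equivalently, $T_{\widetilde{K}\psi}f=\int_{\mathbb{R}^d}\widehat{\psi}(\eta)\, e_{\eta}\, T_{\widetilde K}(e_{-\eta}f)\,d\eta$ up to constants, where $T_{\widetilde K}$ denotes the action of the periodic multiplier on functions on $\mathbb{R}^d$. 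Since $\widehat{\psi}$ is Schwartz and condition (iv) gives polynomial growth of modulations, it suffices to show the following: the periodic multiplier $T_{K}$ acting on $\mathbb{R}^d$ is bounded on $X(\mathbb{R}^d)$ in a \emph{localized} sense, with polynomial control in the modulation parameter.

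\textbf{Step 1 (localization).} Fix a smooth partition of unity $\sum_{n\in\mathbb{Z}^d}\varphi(\cdot-2n)\equiv1$, with $\varphi$ supported in a slightly enlarged copy of $Q_0$. Split $f=\sum_n f_n$ with $f_n=\varphi(\cdot-2n)f$. Group the $n$ into $O(1)$ subfamilies whose members are separated by a large distance $L$. By (ii) and (i), each $\|f_n\|_{X(\mathbb{R}^d)}$ is controlled and translations are harmless.

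\textbf{Step 2 (local estimate).} For a single piece $g=f_n$ translated to the origin, identify $g$ with a function on $\mathbb{T}^d$ via periodization. The definition of $\|\cdot\|_{X(\mathbb{T}^d)}$ through $\varphi_0$ makes $\|g\|_{X(\mathbb{T}^d)}\lesssim\|g\|_{X(\mathbb{R}^d)}$ when $\operatorname{supp} g\subset[-4,4]^d$. The hypothesis $T_K\in M(X(\mathbb{T}^d))$ then gives $\|\varphi_0 T_{\widetilde K}g\|_{X(\mathbb{R}^d)}\lesssim \|g\|_{X(\mathbb{R}^d)}$. Replacing $\widetilde{K}$ by $\widetilde K\psi$ localizes the output near $\operatorname{supp} g$: the part far away carries the factor $\psi$ evaluated at large arguments, and that factor decays faster than any polynomial.

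\textbf{Step 3 (reassembly).} For each subfamily, use (iii) to compare $\|\sum_n \phi_n\, T_{\widetilde K\psi}f_n\|$, for suitable cutoffs $\phi_n$ with disjoint supports, against $\|\sum_n C f_n\|$-type sums. The tails are summed using the rapid decay of $\psi$ together with (i) and (iv). Throughout, $\widetilde K\psi$ is handled by expanding $\psi$ into a rapidly convergent series of modulated cutoffs, $\psi=\sum_{m}c_m\, e_{m}\,\phi(\cdot/R)$ on dyadic shells, which reduces everything to operators of the form $e_m T_{\widetilde K}(\phi_R\cdot)$.

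The main obstacle is Step 3. Property (iii) only compares disjointly supported families, so a true almost-orthogonality argument is unavailable. The convolution with $\widetilde K\psi$ spreads supports, and the sizes of the tails have to be reconciled with the supports required by (iii). To handle this, I would first truncate $\psi$ to a compact cutoff, where supports spread by a bounded amount and the separation $L$ restores disjointness. I would then sum over dyadic shells of $\psi$, with the shell at scale $2^j$ costing a polynomial factor $2^{jN'}$ (from (i), (iv) and the number of subfamilies) against the super-polynomial decay of $\psi$.
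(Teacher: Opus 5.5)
Your overall plan (localize $f$, pay polynomial losses from (iv) against the decay of $\psi$, reassemble disjointly supported pieces with (iii)) is reasonable. However, the step that carries the proposition is missing, and where you try to supply it the argument is wrong. The reassembly you flag as the main obstacle is routine once supports are controlled. The real difficulty is controlling convolution with the \emph{cut-off} kernel $\widetilde K\psi$, or a piece of it, using only $T_K\in M(X(\mathbb{T}^d))$.

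Your Step 2 proves $\|\varphi_0 T_{\widetilde K}g\|_{X(\mathbb{R}^d)}\lesssim\|g\|_{X(\mathbb{R}^d)}$ for the uncut periodic kernel. It then asserts that replacing $\widetilde K$ by $\widetilde K\psi$ merely localizes the output. That says nothing about why $(\widetilde K\psi)\ast g$ is bounded near $\supp g$. Multiplying a kernel by a non-periodic smooth function does not preserve multiplier bounds for free; that is exactly what must be proved.

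In Step 3, the reduction to ``operators of the form $e_mT_{\widetilde K}(\phi_R\cdot)$'' confuses multiplying the kernel with multiplying the input. The kernel $e_m\phi_R\widetilde K$ gives $g\mapsto e_m[(\phi_R\widetilde K)\ast(e_{-m}g)]$, which still contains a truncated kernel.

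The opening reduction via $\int\widehat\psi(\eta)e_\eta T_{\widetilde K}(e_{-\eta}f)\,d\eta$ does not suffice either. Each $T_{\widetilde K}(e_{-\eta}f)$ is periodic, so the triangle inequality in $\eta$ loses all spatial decay, which lives in the cancellation in $\eta$.

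Finally, the modulations you use (continuous $\eta$, or the dual lattice of a box of side $R$) are not lattice frequencies. Hypothesis (iv) only controls $e_\chi$, $\chi\in\mathbb{Z}^d$, on $X(\mathbb{T}^d)$.

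The idea that closes the gap, and the one the paper uses, is to cut \emph{both} the kernel and the input below the period scale. By Lemma \ref{lem.sch}, write $\psi=\sum_n\psi_n$ with each $\psi_n$ supported in a translate of $\frac14 Q_0$ and $\sum_n\sum_{\chi}\langle\chi\rangle^{2N}|\widehat{\psi_n}(\chi)|<\infty$. Split $f$ by a periodic partition of unity into pieces $f_j^\chi$ of small support. Then $(\widetilde K\psi_n)\ast f_j^\chi$ is supported in a cube smaller than a period. There it coincides with the torus convolution of $f_j^\chi(\cdot-z)$ against $K\psi_n(\cdot+z)$. Since $\widehat{K\varphi}=\sum_\chi\widehat K(\cdot-\chi)\widehat\varphi(\chi)$, i.e.\ $T_{K\varphi}f=\sum_\chi\widehat\varphi(\chi)\,e_\chi T_K(e_{-\chi}f)$, hypothesis (iv) bounds this by $\sum_\chi\langle\chi\rangle^{2N}|\widehat{\psi_n}(\chi)|$ using only lattice modulations. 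The outputs for different $\chi$ have disjoint supports, so (iii) sums them, and Lemma \ref{lem.sch} sums over $n$.

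Your dyadic shells at scale $2^j$ give kernel pieces much larger than a period. To make them work you would have to chop each shell into $\sim 2^{jd}$ sub-period pieces anyway, which is the paper's argument.
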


It of interest to deal with extension of the form $\tilde{K}\psi $ where $%
\psi $ is any Schwartz function, not necessarily compactly supported as in 
\cite{Bon-Moh}. For this purpose we will use the following elementary lemma.

\begin{lemma}
\label{lem.sch}Let $\psi $ be a Schwartz function on $\mathbb{R}^{d}$. There
exist smooth functions $\psi _{1},\psi _{2},...$ on $\mathbb{R}^{d}$, each
one of them supported in a translation of the cube $(1/4)Q_{0}$, such that $%
\psi _{1}+\psi _{2}+...=\psi $ and 
\begin{equation}
\sum_{n=1}^{\infty }\sum_{\chi \in \mathbb{Z}^{d}}\left\langle \chi
\right\rangle ^{2N}|\widehat{\psi }_{n}(\chi )|<\infty ,  \label{sch-0}
\end{equation}%
where $\widehat{\psi }_{n}$ is the Fourier transform of $\psi _{n}$
considered on $\mathbb{R}^{d}$.
\end{lemma}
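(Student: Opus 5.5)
We will decompose $\psi$ with a smooth partition of unity adapted to the integer lattice, with cells shrunk so that each piece lives in a translate of $(1/4)Q_0$. Fix $\theta\in C_c^\infty((1/4)Q_0)$, $\theta\geq 0$, such that the translates $\theta(\cdot-\lambda/8)$, $\lambda\in\mathbb{Z}^d$, sum to $1$ on $\mathbb{R}^d$. For instance, take $\theta_0\in C_c^\infty((1/4)Q_0)$ positive on $(1/8)Q_0$ and normalize by the (strictly positive, $\frac18\mathbb{Z}^d$-periodic) sum of its translates. Enumerate the lattice $\frac18\mathbb{Z}^d$ as $\lambda_1,\lambda_2,\dots$ and set $\psi_n:=\psi\,\theta(\cdot-\lambda_n)$. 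Each $\psi_n$ is smooth and supported in $\lambda_n+(1/4)Q_0$. Moreover $\sum_n\psi_n=\psi$ pointwise, and the sum is locally finite, so it also converges in the sense of distributions.

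The main step is a uniform estimate on the Fourier transforms. Fix an integer $M\geq 2N+d+1$. For each $n$, integration by parts gives
\[
|\chi^{\alpha}\widehat{\psi}_n(\chi)|\le \|\nabla^{\alpha}\psi_n\|_{L^1},
\]
and since the support of $\psi_n$ has bounded volume, this yields
\[
\langle\chi\rangle^{M}|\widehat{\psi}_n(\chi)|\lesssim_M \max_{|\alpha|\le M}\|\nabla^{\alpha}\psi_n\|_{L^\infty}.
\]
By the Leibniz formula, and because all derivatives of $\theta$ are bounded independently of $n$,
\[
\max_{|\alpha|\le M}\|\nabla^{\alpha}\psi_n\|_{L^\infty}\lesssim \max_{|\alpha|\le M}\sup_{x\in\lambda_n+(1/4)Q_0}|\nabla^\alpha\psi(x)|\lesssim_{M,P}\langle\lambda_n\rangle^{-P}
\]
for any $P$, using the Schwartz decay of $\psi$ and its derivatives (note $\langle x\rangle\sim\langle\lambda_n\rangle$ on that cube).

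Summing, we get
\[
\sum_n\sum_{\chi}\langle\chi\rangle^{2N}|\widehat\psi_n(\chi)|\lesssim \sum_n\langle\lambda_n\rangle^{-P}\sum_{\chi\in\mathbb{Z}^d}\langle\chi\rangle^{2N-M}.
\]
The inner sum is finite because $M-2N>d$, and the outer sum is finite once we take $P>d$. This proves (\ref{sch-0}).

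The only delicate point is the construction of a smooth partition of unity with uniformly controlled derivatives and supports in quarter cubes. The translation-plus-normalization construction described above handles this, since the normalizing sum is smooth, periodic and bounded below. Everything else is routine bookkeeping.
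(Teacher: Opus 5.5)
Your proof is correct. It has the same skeleton as the paper's: a smooth partition of unity by lattice translates of bumps supported in copies of $(1/4)Q_0$, followed by a double sum over translates and frequencies. The key estimate, however, comes from a different, essentially dual computation.

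The paper works on the Fourier side. For $\varphi\in C_c^\infty((1/4)Q_0)$ and $\varphi_y=\varphi(\cdot+y)$ it writes $\widehat{\psi\varphi_y}(\xi)=\widehat{F_\xi}(-y)$ with $F_\xi=\widehat{\varphi}(\cdot)\,\widehat{\psi}(\xi-\cdot)$. It gets the decay $\langle y\rangle^{-(d+1)}$ in the translation parameter by differentiating $F_\xi$, i.e.\ from the spatial decay of $\varphi$ and $\psi$. It then controls the $\chi$-sum by a discrete convolution estimate against $|\nabla^{j}\widehat{\varphi}|$, using the decay of $\widehat{\psi}$.

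You work in physical space instead. Integration by parts, together with Leibniz and the uniformly bounded derivatives of $\theta$, gives the decay in $\chi$. The pointwise Schwartz decay of $\nabla^{\beta}\psi$ on $\lambda_n+(1/4)Q_0$ gives the decay in $n$.

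What each route buys:
\begin{itemize}
\item Your argument is more elementary. It also yields the stronger bound $\sup_{\xi\in\mathbb{R}^d}\langle\xi\rangle^{M}|\widehat{\psi}_n(\xi)|\lesssim_{M,P}\langle\lambda_n\rangle^{-P}$, uniform over all real frequencies, so it is insensitive to how the frequency lattice is normalized.
\item The paper's version treats all integer translates of a fixed bump in a single computation phrased purely in terms of $\widehat{\psi}$ and $\widehat{\varphi}$.
\end{itemize}

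Your explicit $\frac18\mathbb{Z}^d$ construction also handles the covering cleanly. The paper's last sentence asks for integer translates of finitely many functions supported in $(1/4)Q_0$ itself, which cannot cover $\mathbb{R}^d$ as literally written. One needs supports in fixed translates of $(1/4)Q_0$, which the paper's estimate tolerates. Grouping your $\frac18\mathbb{Z}^d$ translates into the $8^d$ cosets of $\mathbb{Z}^d$ gives exactly such a family.
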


\noindent\textbf{Proof.} Consider first an arbitrary function $\varphi \in
C_{c}^{\infty }((1/4)Q_{0})$, and for any $y\in \mathbb{R}^{d}$ denote by $%
\varphi _{y}$ the function $\varphi _{y}=\varphi (\cdot +y)$. We have 
\begin{equation*}
\widehat{\psi \varphi _{y}}(\xi )=\widehat{\psi }\ast \widehat{\varphi _{y}}%
(\xi )=\int_{\mathbb{R}^{d}}e^{i\left\langle y,\eta \right\rangle }\widehat{%
\varphi }(\eta )\widehat{\psi }(\xi -\eta )d\eta =\widehat{F}_{\xi }(-y),
\end{equation*}%
where $F_{\xi }:=\widehat{\varphi }(\cdot )\widehat{\psi }(\xi -\cdot )$ is
a Schwartz function. We have 
\begin{equation*}
|\widehat{F}_{\xi }(y)|\lesssim \left\langle y\right\rangle
^{-(d+1)}(\left\Vert F_{\xi }\right\Vert _{L^{1}}+\left\Vert \nabla
^{m}F_{\xi }\right\Vert _{L^{1}})\lesssim \left\langle y\right\rangle
^{-(d+1)}\sum_{j_{1},j_{2}\leq d+1}\left\Vert |\nabla ^{j_{1}}\widehat{%
\varphi }||\nabla ^{j_{2}}\widehat{\psi }(\xi -\cdot )|\right\Vert _{L^{1}}.
\end{equation*}

Hence, 
\begin{eqnarray*}
\sum_{\chi ^{\prime }\in \mathbb{Z}^{d}}\sum_{\chi \in \mathbb{Z}%
^{d}}\left\langle \chi \right\rangle ^{2N}|\widehat{\psi \varphi _{\chi
^{\prime }}}(\chi )| &=&\sum_{\chi ^{\prime }\in \mathbb{Z}^{d}}\sum_{\chi
\in \mathbb{Z}^{d}}\left\langle \chi \right\rangle ^{2N}|\widehat{F}_{\chi
}(-\chi ^{\prime })| \\
&\lesssim &\sum_{j_{1},j_{2}\leq d+1}\sum_{\chi ^{\prime }\in \mathbb{Z}%
^{d}}\left\langle \chi ^{\prime }\right\rangle ^{-(d+1)}\sum_{\chi \in 
\mathbb{Z}^{d}}\left\langle \chi \right\rangle ^{2N}\left\Vert |\nabla
^{j_{1}}\widehat{\varphi }||\nabla ^{j_{2}}\widehat{\psi }(\chi -\cdot
)|\right\Vert _{L^{1}},
\end{eqnarray*}%
and since 
\begin{equation*}
\sum_{\chi \in \mathbb{Z}^{d}}\left\langle \chi \right\rangle
^{2N}\left\Vert |\nabla ^{j_{1}}\widehat{\varphi }||\nabla ^{j_{2}}\widehat{%
\psi }(\chi -\cdot )|\right\Vert _{L^{1}}\lesssim \int_{\mathbb{R}%
^{d}}\left( \sum_{\chi \in \mathbb{Z}^{d}}\frac{\left\langle \chi
\right\rangle ^{2N}}{\left\langle \chi -x\right\rangle ^{2N+d+1}}\right)
|\nabla ^{j_{1}}\widehat{\varphi }|(x)dx\lesssim 1,
\end{equation*}%
we get 
\begin{equation}
\sum_{\chi ^{\prime }\in \mathbb{Z}^{d}}\sum_{\chi \in \mathbb{Z}%
^{d}}\left\langle \chi \right\rangle ^{2N}|\widehat{\psi \varphi _{\chi
^{\prime }}}(\chi )|<\infty .  \label{sch-1}
\end{equation}

Now one can deduce (\ref{sch-0}) from (\ref{sch-1}) by considering a smooth
partition of unity on $\mathbb{R}^{d}$ with a family of smooth functions of
the form $(\varphi _{\chi }^{j})_{j\in \{1,...,s\},\chi \in \mathbb{Z}^{d}}$%
, where each $\varphi ^{j}$ is supported in $(1/4)Q_{0}$. \ One can obtain
the family of functions $(\psi _{n})_{n\geq 1}$\ by relabeling the functions 
$\psi \varphi _{\chi ^{\prime }}$,\ $\chi \in \mathbb{Z}^{d}$.\hfill $%
\square $

\noindent \textbf{Proof of Proposition \ref{Bon-Moh-general}.} The proof
closely follows the proof of Theorem 3.4 in \cite{Bon-Moh}. First, we
observe that, by property (iv) we have 
\begin{equation}
\left\Vert \widehat{K}(D-\chi )f\right\Vert _{X(\mathbb{T}^{d})}=\left\Vert
e_{\chi }\widehat{K}(D)(e_{-\chi }f)\right\Vert _{X(\mathbb{T}^{d})}\lesssim
_{K}\left\langle \chi \right\rangle ^{2N}\left\Vert f\right\Vert _{X(\mathbb{%
T}^{d})},  \label{BM-1}
\end{equation}%
for any $f\in X(\mathbb{T}^{d})$ and any $\chi \in \mathbb{Z}^{d}$. By the
identity%
\begin{equation*}
\widehat{K\varphi }=\sum_{\chi \in \mathbb{Z}^{d}}\widehat{K}(\cdot -\chi )%
\widehat{\varphi }(\chi ),
\end{equation*}%
an the triangle inequality, (\ref{BM-1}) implies that, 
\begin{equation}
\left\Vert \widehat{K\varphi }(D)f\right\Vert _{X(\mathbb{T}^{d})}\lesssim
\left( \sum_{\chi \in \mathbb{Z}^{d}}\left\langle \chi \right\rangle ^{2N}|%
\widehat{\varphi }(\chi )|\right) \left\Vert f\right\Vert _{X(\mathbb{T}%
^{d})},  \label{BM-2}
\end{equation}%
for any $f\in X(\mathbb{T}^{d})$ and any $\varphi \in C^{\infty }(\mathbb{T}%
^{d})$ (here we used the Fourier transform on $\mathbb{T}^{d}$).

Consider some smooth functions $\theta _{1},..,\theta _{s}$ on $\mathbb{T}%
^{d}$ ($s$ is an integer depending on $d$), such that $\theta
_{1}+...+\theta _{s}=1$, on $\mathbb{T}^{d}$, and each $\theta _{j}$ is
supported in a cube of side length $<1/4$. If $\widetilde{\theta }^{1},..,%
\widetilde{\theta }^{s}$ are the periodic extensions of $\theta
^{1},..,\theta ^{s}$ respectively, then $\widetilde{\theta }_{1}+..+%
\widetilde{\theta }_{s}=1$, on $\mathbb{R}^{d}$. We also consider, for each $%
\chi \in \mathbb{Z}^{d}$, the function $\widetilde{\theta }_{j}^{\chi }\in
C_{c}^{\infty }(\mathbb{R}^{d})$ that is equal to $\widetilde{\theta }_{j}$
on $Q_{0}+\chi $ and $0$ outside $Q_{0}+\chi $. Now, for a given $f\in X(%
\mathbb{R}^{d})$ we can write the decomposition%
\begin{equation}
f=\sum_{j=1}^{s}f_{j}\text{, \ \ and \ }f_{j}=\sum_{\chi \in \mathbb{Z}%
^{d}}f_{j}^{\chi },  \label{BM-3}
\end{equation}%
where $f_{j}:=\widetilde{\theta }_{j}f$, and $f_{j}^{\chi }:=\widetilde{%
\theta }_{j}^{\chi }f$ .

By Lemma \ref{lem.sch} we can choose some smooth functions $\psi _{1},\psi
_{2},...$ on $\mathbb{R}^{d}$, each one of them supported in a translation
of the cube $(1/4)Q_{0}$, such that $\psi _{1}+\psi _{2}+...=\psi $ and 
\begin{equation}
\sum_{n=1}^{\infty }\sum_{\chi \in \mathbb{Z}^{d}}\left\langle \chi
\right\rangle ^{2N}|\widehat{\psi }_{n}(\chi )|<\infty ,  \label{BM-3'}
\end{equation}%
where $\widehat{\psi }_{n}$ is the Fourier transform of $\psi _{n}$
considered on $\mathbb{R}^{d}$.

Fix some integer $n\geq 1$ and some $j\in \{1,...,s\}$. Since the support of
each $(\widetilde{K}\psi _{n})\ast f_{j}^{\chi }$ is included on a cube of
the form $(1/2)Q_{0}+z$, for some $z\in \mathbb{R}^{d}$ (depending on $n$
and $\chi $), we have 
\begin{eqnarray*}
(\widetilde{K}\psi _{n})\ast f_{j}^{\chi }(x) &=&\int_{Q_{0}+z}\widetilde{K}%
(y)\psi _{n}(y)f_{j}^{\chi }(x-y)dy \\
&=&\int_{\mathbb{T}^{d}}K(y)\psi _{n}(y+z)f_{j}^{\chi }(x-y-z)dy,
\end{eqnarray*}%
and by (\ref{BM-2}) together with (i), (iv) we get%
\begin{equation*}
\left\Vert (\widetilde{K}\psi _{n})\ast f_{j}^{\chi }\right\Vert _{X(\mathbb{%
R}^{d})}\lesssim \left( \sum_{\chi \in \mathbb{Z}^{d}}\left\langle \chi
\right\rangle ^{2N}|\widehat{\psi }_{n}(\chi )|\right) \left\Vert
f_{j}^{\chi }\right\Vert _{X(\mathbb{R}^{d})}.
\end{equation*}

Now, using (\ref{BM-3}), the fact that the distributions $(\widetilde{K}\phi
_{n})\ast f_{j}^{\chi }$ have pair-wise disjoint supports and (iii) we have%
\begin{equation*}
\left\Vert (\widetilde{K}\psi _{n})\ast f_{j}\right\Vert _{X(\mathbb{R}%
^{d})}\lesssim \left( \sum_{\chi \in \mathbb{Z}^{d}}\left\langle \chi
\right\rangle ^{2N}|\widehat{\psi }_{n}(\chi )|\right) \left\Vert
f_{j}\right\Vert _{X(\mathbb{R}^{d})}.
\end{equation*}

This gives us 
\begin{equation*}
\left\Vert (\widetilde{K}\psi )\ast f_{j}\right\Vert _{X(\mathbb{R}%
^{d})}\lesssim \left( \sum_{n=1}^{\infty }\sum_{\chi \in \mathbb{Z}%
^{d}}\left\langle \chi \right\rangle ^{2N}|\widehat{\psi }_{n}(\chi
)|\right) \left\Vert f_{j}\right\Vert _{X(\mathbb{R}^{d})},  \label{BM-4}
\end{equation*}
which together with (\ref{BM-3'}), (\ref{BM-3}) and (ii) concludes the
proof. \hfill $\square $

\end{document}